\definecolor{gray75}{gray}{0.75}
\newcommand{\hsp}{\hspace{20pt}}
\titleformat{\chapter}[hang]{\Huge\bfseries}{\thechapter\hsp\textcolor{gray75}{|}\hsp}{0pt}{\Huge\bfseries}
\newcounter{item}[section]
\newcounter{kirshr}
\newcounter{kirsha}
\newcounter{kirshb}
\newtheorem{theorem}{Theorem}[section]
\newtheorem{proposition}[theorem]{Proposition}
\newtheorem{example}[theorem]{Example}
\newtheorem{lemma}[theorem]{Lemma}
\newtheorem{corollary}[theorem]{Corollary}
\newtheorem{proviso}[theorem]{Proviso}
\newtheorem{conjecture}[theorem]{Conjecture}
\theoremstyle{definition}
\newtheorem{remark}[theorem]{Remark}
\newtheorem{definition}[theorem]{Definition}
\title{The Power of the Depth of Iteration in Defining Relations by Induction}
\author{By\\Amena Assem Abd-AlQader Mahmoud}
\date{}
\begin{document}
\onehalfspacing
\begin{titlepage}
\begin{center}
\Large \textbf{The Power of the Depth of Iteration in Defining Relations by Induction}\\[1.5cm]

Presented by\\ \large \textbf{Amena Assem Abd-AlQader Mahmoud}\\[1cm]

\normalsize A Thesis Submitted \\ to\\ \textbf{Faculty of Science} \\ In Partial Fulfillment of the\\ Requirements for \\ the Degree of \\ Master of Science \\ (Pure Mathematics)\\ Mathematics Department \\ Faculty of Science \\ Cairo University \\ (2014)
\end{center}
\end{titlepage}
\newpage
\thispagestyle{empty}
\mbox{}
.
\newpage
\thispagestyle{empty}
\mbox{}

\begin{center}
\large \textbf{APPROVAL SHEET FOR SUBMISSION}\\[1cm]
\end{center}
\textbf{Thesis Title}: The Power of the Depth of Iteration in Defining Relations by Induction.\\
\textbf{Name of candidate}: Amena Assem Abd-AlQader Mahmoud.\\[1cm]

This thesis has been approved for submission by the supervisors:\\\\
1- Prof. Ford Georgy\\[.5cm]
Signature:\\[2cm]
2- Prof. Wafik Boulos Lotfallah\\[.5cm]
Signature:\\[3cm]
\begin{flushright}
Prof.Dr. N.H. Sweilam.\\
Chairman of Mathematics Department\\
Faculty of Science - Cairo University
\end{flushright}

\newpage
\thispagestyle{empty}
\mbox{}
.
\newpage
\thispagestyle{empty}
\mbox{}

\begin{center}
\large \textbf{ABSTRACT}\\[1cm]
\end{center}
\textbf{Student Name}: Amena Assem Abd-AlQader Mahmoud.\\
\textbf{Title of the Thesis}: The Power of the Depth of Iteration in Defining Relations by Induction.\\
\textbf{Degree}: M.Sc. (Pure Mathematics).\\[0.1cm]

In this thesis we study inductive definitions over finite structures, particularly,
 the depth of inductive definitions. We also study infinitary finite variable logic which contains fixed-point logic and we introduce a new complexity measure $\textrm{FO}_{\bigvee}[f(n),g(n)]$ which counts the number, $f(n)$, of $\vee$-symbols, and the number, $g(n)$, of variables, in first-order formulas needed to express a given property. We prove that for $f(n)\geq \log{n}$, $\textrm{NSPACE}[f(n)] \subseteq \textrm{FO}_{\bigvee}[f(n)+\left(\frac{f(n)}{\log{n}}\right)^2,\frac{f(n)}{\log{n}}]$, and that for any $f(n),g(n)$, $\textrm{FO}_{\bigvee}[f(n),g(n)]\subseteq \textrm{DSPACE}[f(n)g(n)\log{n}]$. Also we study the expressive power of quantifier rank and number of variables and we prove that there is a property of words expressible with two variables and quantifier rank $2^n+2$ but not expressible with quantifier rank $n$ with any number of variables.\\[0.1cm]

\textbf{Keywords}: Fixed-Point; Depth; Finite Variable Logics.\\[0.2cm]

\textbf{Supervisors}:\hspace{6cm} \textbf{Signature:} \\
Prof. Ford Georgy \\\\
Prof. Wafik Boulos Lotfallah \\[0.2cm]
\begin{flushright}
Prof.Dr. N.H. Sweilam.\\
Chairman of Mathematics Department\\
Faculty of Science - Cairo University
\end{flushright}
\pagenumbering{roman}
\newpage
\thispagestyle{empty}
\mbox{}
.
\newpage
\thispagestyle{empty}
\mbox{}

\newpage
.
\thispagestyle{empty}
\mbox{}
.
\newpage
\tableofcontents
\newpage

\newpage
\thispagestyle{empty}
\mbox{}
.

\newpage

\pagenumbering{arabic}
\setcounter {chapter}{-1}
\chapter{Introduction}
\section{Background}
\begin{par}
First-order logic is weak, it is incapable of making inductive definitions. There are extensions of first-order logic in which inductive definitions are possible, namely, fixed-point extensions. For example, first-order logic cannot express the path relation, which is the transitive closure of the edge relation, in graphs. Actually it is unable to express transitive closure in general as noted by Aho and Ullman in 1979 (see \cite{Aho} and \cite{Gurevich}, in fact they noted this for relational calculus, which, from the point of view of expressive power, is exactly first-order logic). They then, Aho and Ullman, suggested extending the relational calculus by adding the least fixed-point operator.
\end{par}
 \begin{par}
 There are two fields where fixed-point extensions of first-order logic were extensively studied earlier. One is the theory of inductive definitions (for example by Moschovakis \cite{Moschovakis}, 1974). The other is semantics of programming languages where a fixed-point extension of first-order logic is known as first-order $\mu$-calculus. But neither of the two fields put finite structures into the center of attention.
 \end{par}
 \begin{par}
 An inductive definition needs a certain number of iterations before it closes; this number is called the depth. Immerman gave an elaborate definition of depth for positive inductive definitions over finite structures (see \cite{Immerman}), then he proved that inductive depth equals parallel-time and equals the depth of circuit in Circuit Complexity.
 \end{par}
 \begin{par}
  He also introduced another view of depth as the number of iterations of a quantifier block. He proved that the complexity class $\textrm{P}$ is exactly the set of boolean queries expressible by first-order quantifier blocks iterated polynomially and $\textrm{PSPACE}$ is exactly the set of boolean queries expressible by first-order quantifier blocks iterated exponentially.
   \end{par}
   \begin{par}
   The theorems proved by Immerman showed the importance of depth as a complexity measure. Here we study depth.
   \end{par}
\section{Summary}
\begin{par}
The thesis consists of three chapters. In the first chapter we introduce the preliminary definitions and facts we need from logic and complexity. The last section is on complexity, and the first section is for extra notations that are not established within the definitions of the thesis. The middle three sections are about finite structures, first-order logic and its fixed-point extensions, the  Ehrenfeucht-Fra\"{\i}ss\'{e} game and its importance in proving non-expressibility results.
 \end{par}
 \begin{par}
 Particularly, at the end of the third section, we mention an example from \cite{Flum} using the algebraic version of the game in proving non-expressibility of connectivity in first-order logic, and then, in the fourth section, we present fixed-point extensions of first-order logic in which connectivity is expressible, or in fact, in which the path relation, which is the transitive closure of the edge relation, in graphs, and transitive closure in general, and more complicated kinds of recursion, are expressible.
 \end{par}
 \begin{par}
The second section is devoted to finite structures, especially, graphs and binary strings. We deal here with finite structures only because the objects computers have and hold are always finite. Inputs, databases, programs are all finite objects that can be conveniently modeled as finite logical structures. Binary strings are important because every finite ordered structure can be coded as a binary string, and this is how the structure is introduced as an input to the Turing machine.
 \end{par}
 \begin{par}
 Graphs are important because every finite structure may be thought of as a graph, or as Immerman expressed it: "Everything is a Graph" \cite{Immerman}. Later in the first chapter we will be able to define precisely what we mean by saying that every finite structure may be thought of as a graph.
  \end{par}
  \begin{par}
  Graphs are important also because the computation of a Turing machine can be represented as a graph, called the configuration graph, in which vertices represent possible configurations (the sum of machine's state and work-tape inscription and positions of heads) of the machine and edges represent the possibility of a transition, in one step, by the machine's transition function from that configuration to the other.
  \end{par}
  \begin{par}
Those configuration graphs showed
their importance in the proofs of completeness of particular graph queries for certain complexity classes. For example, in the proofs of the completeness of reachability for nondeterministic logspace and the completeness of alternating reachability for deterministic polynomial time, one obtains from a structure (being tested by the machine for its satisfaction of a query from one of these complexity classes) the configuration graph of the machine's computation on it and then the problem is proved to be reducible to or equivalent to testing that graph for its satisfaction of the respective graph query. We used configuration graphs in the proof of theorem \ref{3.3.10}.
\end{par}
\begin{par}
 Another importance of graphs is that one of the most famous models of computation and its measures of complexity are defined in terms of graphs and properties of graphs, namely, Boolean Circuits, and measuring complexity via their depths.
 \end{par}
\begin{par}
The second chapter is devoted to the notion of depth of inductive definitions over finite structures. Immerman defined depth for positive formulas only. Here we define depth for all formulas which have a fixed-point and set it to be $\infty$ if the fixed-point does not exist.
 \end{par}
 \begin{par}
 We also define the inflationary-depth of a formula as the depth of the inflationary formula obtained from it. These definitions are in the first section then, in the second section, we exhibit the basic theorems relating depth to complexity classes from Immerman's "Descriptive Complexity" \cite{Immerman}.
 \end{par}
 \begin{par}
  In the last section we investigate the relationship between the depth of a formula and its inflationary-depth and find that no one of them is always greater than the other, i.e., there are formulas for which the depth is greater than the inflationary-depth and formulas for which the inflationary-depth is greater than the depth.
   \end{par}
   \begin{par}
   We investigate also the relationship between the depth of the simultaneous fixed-point of two formulas and the depth of their nested fixed-point and show that no one of them is always greater than the other if the relation variables through which the fixed-point is computed are not restricted to be positive.
   \end{par}
  \begin{par}
What motivated us to compare the depth of the nested fixed-point and the depth of the simultaneous fixed-point of the same two formulas is that in the proof of Lemma 8.2.6 of \cite{Flum} they show that under some conditions on two formulas $\varphi$ and $\psi$, we have that their nested fixed-point equals their simultaneous fixed-point. We wanted to see which one of the two ways, nested or simultaneous, to reach that fixed-point, consumes more time or needs a bigger number of iterations.
\end{par}
\begin{par}
In the third chapter, in the first section, we present finite variable logics and their respective games (pebble games). In the second section we study the expressive power of number of variables and quantifier rank, motivated by the theorems, exhibited in Section 2.2 from Immerman's "Descriptive Complexity", relating respectively the number of variables in an inductive definition and its depth (which turns out to be equal to the number of iterations of a quantifier block) to the number of processors in a parallel machine and its parallel-time.
 \end{par}
 \begin{par}
 We prove that there is a property of words expressible with two variables and quantifier rank $2^n+2$ but not expressible with quantifier rank $n$ with any number of variables (Proposition \ref{3.3.5}).
  \end{par}
  \begin{par}
  In the third section we focus on infinitary finite variable logic $\textrm{L}_{\infty\omega}^{\omega}$ which contains fixed-point logic and we introduce a rough relationship between the depth of an inductive definition in structures of size $n$ and the number of $\vee$-symbols in the first-order formula expressing that inductive definition in structures of size $n$.
  \end{par}
  \begin{par}
   We introduce a new complexity measure $\textrm{FO}_{\bigvee}[f(n),g(n)]$ which counts the number, $f(n)$, of $\vee$-symbols, and the number, $g(n)$, of variables, in first-order formulas needed to express a given property. We prove that for $f(n)\geq \log{n}$, $\textrm{NSPACE}[f(n)] \subseteq \textrm{FO}_{\bigvee}[f(n)+\left(\frac{f(n)}{\log{n}}\right)^2,\frac{f(n)}{\log{n}}]$, and that for any $f(n),g(n)$, $\textrm{FO}_{\bigvee}[f(n),g(n)]\subseteq \textrm{DSPACE}[f(n)g(n)\log{n}]$.
    \end{par}
    \begin{par}
    In the last section we talk more about pebble games and depth. We ask two questions and suggest a conjecture concerning depth.
    \end{par}

\section{List of Results}
\begin{enumerate}
\item[(1)] Examples showing that, in general, there is no order comparison between the depth of a formula and its inflationary depth, and no order comparison between the depth of the iteration it takes to come to the simultaneous fixed-point and the depth of the iteration it takes to come to the nested fixed-point of two formulas if the relation variables through which the induction is made are not restricted to be positive. These examples are exhibited in Section 2.3 which is devoted for them, and therefore, titled \textbf{Examples}.
\item[(2)] Proposition \ref{3.3.5}, where we prove that there is a property of words expressible with two variables and quantifier rank $2^n+2$ but not expressible with quantifier rank $n$ with any number of variables.
\item[(3)] Theorem \ref{3.3.10}, where we prove that for $f(n)\geq \log{n}$, $$\textrm{NSPACE}[f(n)] \subseteq \textrm{FO}_{\bigvee}[f(n)+\left(\frac{f(n)}{\log{n}}\right)^2,\frac{f(n)}{\log{n}}].$$
\item[(4)] Theorem \ref{3.3.11}, where we prove that for any $f(n),g(n)$,
$$\textrm{FO}_{\bigvee}[f(n),g(n)]\subseteq \textrm{DSPACE}[f(n)g(n)\log{n}].$$
\end{enumerate}

\newpage
\thispagestyle{empty}
\mbox{}
.

\chapter{Preliminaries}

This is a chapter with the preliminaries we need from logic and complexity. It is mainly from \cite{Flum} and \cite{Immerman}.
\section{Notation}
Most of our notation is from \cite{Flum} and \cite{Immerman}. Almost all notations are established within the definitions in the thesis, except a few things which we mention here.$\newline$
We write $:=$ to mean \emph{equals by definition}. $\newline$
$\mathbb{N}$ denotes the natural numbers $\{0,1,\ldots\}$. $\newline$
We write $\backslash$ to denote the difference of sets, i.e., $A\backslash B := \{x\in A \; | \; x \notin B\}$. For a finite set $A$, we write $||A||$ to denote the number of elements in it. $\newline$ For a function $f$, we write $do(f)$ and $rg(f)$ to denote its domain and range respectively. We write $(f)^r(x)$ to denote the result of applying $f$ $r$ times on $x$, i.e., $\overset{r \; \text{times}}{\overbrace{f(f(\ldots f}}(x)\ldots))$. For two functions $f,g$ from $\mathbb{N}$ to $\mathbb{N}$, we write $f(n) = O(g(n))$ to mean that there is a constant $k$ such that $f(n)\leq k \cdot g(n)$ for every sufficiently large $n$, and write $f(n)=\Theta(g(n))$ to mean that $f(n)=O(g(n))$ and $g(n)=O(f(n))$. $\newline$
For a binary relation $\sim$, we write $\nsim$ to denote its complement.$\newline$
We write $\overline{x}$ to denote the tuple $(x_1,\ldots,x_n)$ when $n$ is understood from the context, and when $\overline{a}$ and $\overline{b}$ are of the same length n, we write $\overline{a} \mapsto \overline{b}$ to denote the map from $\{a_1,\ldots,a_n\}$ to $\{b_1,\ldots,b_n\}$ under which the image of $a_i$ is $b_i$ for every $i \in \{1,\ldots,n\}$. $\newline$
For a symbol $s$ we sometimes write $s^k$ to denote the string $\overset{k\;\text{times}}{\overbrace{s\; s \ldots s}}$.$\newline$
We sometimes write $M(w)\downarrow$ to mean that Turing Machine $M$ accepts input $w$.

\section{Finite Structures}

\begin{definition} (Structures)
\begin{enumerate}
\item[(1)]\emph{Vocabularies} are finite sets that consist of \emph{relation symbols} $P, Q, R,\ldots$ and \emph{constant} \emph{symbols} (for short: \emph{constants}) $c,d,\ldots$. Every relation symbol is equipped with a natural number $\geq 1$, its \emph{arity}. We denote vocabularies by $\tau,\sigma,\ldots$. A vocabulary is \emph{relational}, if it does not contain constants.

\item[(2)] A \emph{structure} $\mathcal{A}$ of vocabulary $\tau$ (by short: a $\tau$-\emph{structure}) consists of a nonempty set $A$, the \emph{universe} or \emph{domain} of $\mathcal{A}$ (also denoted by $|\mathcal{A}|$), of an $n$-ary relation $R^{\mathcal{A}}$ on $A$ for every $n$-ary relation symbol $R$ in $\tau$, and of an element $c^{\mathcal{A}}$ of $A$ for every constant $c$ in $\tau$.

\item[(3)] An $n$-ary \emph{relation} $S$ on $A$ is a subset of $A^n$, the set of $n$-tuples of elements of $A$. We mostly write $Sa_1 \ldots a_n$ instead of $(a_1,\ldots,a_n)\in S$.

\item[(4)] A structure $\mathcal{A}$ is \emph{finite}, if its universe $A$ is a finite set. We denote the cardinality of $A$ if it is finite by $\|\mathcal{A}\|$.
\end{enumerate}
\end{definition}

\begin{proviso}
All structures in the thesis are assumed to be finite.
\end{proviso}

\begin{definition}(Orderings)
\begin{enumerate}
\item[(a)]Let $\tau = \{<\}$ with a binary relation symbol $<$. A $\tau$-structure $\newline$ $\mathcal{A} = (A, <^{\mathcal{A}})$ is called an \emph{ordering} if for all $a,b,c \in A$ :
    \begin{enumerate}
        \item[(1)] not $a<^{\mathcal{A}} a$.
        \item[(2)] $a<^{\mathcal{A}} b$ or $b<^{\mathcal{A}} a$ or $a=b$.
        \item[(3)] if $a<^{\mathcal{A}} b$ and $b<^{\mathcal{A}} c$ then $a<^{\mathcal{A}} c$.
    \end{enumerate}
\item[(b)] Let $S$ be a binary relation symbol (representing the successor relation), and $min$ and $max$ constants (for the first and last element of the ordering). A finite $\{<,S,min,max\}$-structure $\mathcal{A}$ is an ordering if, in addition to $(1),(2),(3)$, for all $a,b \in A$ :
    \begin{enumerate}
        \item[(4)] $S^{\mathcal{A}} ab$ iff ($a<^{\mathcal{A}}b$ and for all $c$, if $a<^{\mathcal{A}}c$ then $b<^{\mathcal{A}}c$ or $b=c$).
        \item[(5)] $min^{\mathcal{A}} <^{\mathcal{A}} a$ or $min^{\mathcal{A}}=a$.
        \item[(6)] $a<^{\mathcal{A}} max^{\mathcal{A}}$ or $a=max^{\mathcal{A}}$.
    \end{enumerate}
\item[(c)] Suppose that $\tau_0$ is a vocabulary with $\{<\} \subseteq \tau_0 \subseteq \{<,S,min,max\}$ and let $\sigma$ be an arbitrary vocabulary with $\tau_0 \subseteq \sigma$. A finite $\sigma$-structure $\mathcal{A}$ is said to be \emph{ordered}, if the \emph{reduct} $\mathcal{A}|\tau_0$ (i.e., the $\tau_0$-structure obtained from $\mathcal{A}$) by forgetting the interpretations of the symbols in $\sigma \backslash \tau_0$) is an ordering.
\end{enumerate}
\end{definition}

\begin{proviso}
Unless stated otherwise, we identify the universe of structures of size $n$ with $\{0,\ldots,n-1\}$, and for orderings we assume that $<$ is interpreted with the usual ordering inherited from $\mathbb{N}$.
\end{proviso}

\begin{definition} (Isomorphism)
Fixing a vocabulary $\tau$, two $\tau$-structures $\mathcal{A}$ and $\mathcal{B}$ are \emph{isomorphic}, written $\mathcal{A}\cong\mathcal{B}$ if there is an \emph{isomorphism} from $\mathcal{A}$ to $\mathcal{B}$, i.e., a bijection $\pi : \mathcal{A} \rightarrow \mathcal{B}$ preserving relations and constants, that is,
for any $n$-ary $R\in \tau$ and $a_1,\ldots,a_n\in A$,
    $R^{\mathcal{A}} a_1 \ldots a_n \;\; \text{iff} \;\; R^{\mathcal{B}} \pi(a_1)\ldots\pi(a_n)$ and,
for any constant $c\in \tau, \; \pi(c^{\mathcal{A}})=c^{\mathcal{B}}$.

\end{definition}

\begin{definition}(Partial Isomorphisms)$\newline$
Assume $\mathcal{A}$ and $\mathcal{B}$ are structures. Let $p$ be a map with $do(p) \subseteq A$ and $rg(p) \subseteq B$, where $do(p)$ and $rg(p)$ denote the domain and the range of $p$, respectively. Then $p$ is said to be a \emph{partial isomorphism} from $\mathcal{A}$ to $\mathcal{B}$ if
\begin{enumerate}
\item[-] $p$ is injective.
\item[-] for every $c \in \tau$ : $c^{\mathcal{A}} \in do(p)$ and $p(c^{\mathcal{A}})=c^{\mathcal{B}}$.
\item[-] for every $n$-ary $R \in \tau$ and all $a_1,\ldots,a_n \in do(p)$,
                             $$R^{\mathcal{A}} a_1 \ldots a_n \;\;\;\;\; \text{iff} \;\;\;\;\; R^{\mathcal{B}} p(a_1) \ldots p(a_n).$$
\end{enumerate}

We write $\textrm{Part}(\mathcal{A},\mathcal{B})$ for the set of partial isomorphisms from $\mathcal{A}$ to $\mathcal{B}$.
\end{definition}

\begin{definition} (Disjoint Union) $\newline$
For relational $\tau$, the \emph{disjoint union} of $\tau$-structures $\mathcal{A}$ and $\mathcal{B}$ with $A\cap B=\emptyset$ is the structure $\mathcal{A}\dot{\cup}\mathcal{B}$ with universe $A\cup B$ and $$R^{\mathcal{A}\dot{\cup}\mathcal{B}}:= R^{\mathcal{A}}\cup R^{\mathcal{B}}\;\; \text{ for any} \; R \;\text{in}\;\tau.$$
In case $\mathcal{A}$ and $\mathcal{B}$ are structures with $A \cap B \neq \emptyset$, we take isomorphic copies $\mathcal{A}^\prime$ of $\mathcal{A}$ and $\mathcal{B}^\prime$ of $\mathcal{B}$ with disjoint universes (e.g., with universes $A\times \{1\}$ and $B\times \{2\}$) and set $\mathcal{A}\dot{\cup}\mathcal{B}:= \mathcal{A}^\prime\dot{\cup}\mathcal{B}^{\prime}$.
\end{definition}

Now we go to a particular kind of structures called graphs. Visually speaking, a graph is a collection of points or vertices linked by line segments or edges.

\begin{definition} (Graphs) $\newline$
Let $\tau = \{E\}$ where $E$ is a binary relation symbol. A \emph{graph} (or, \emph{undirected graph}) is a $\tau$-structure $\mathcal{G}=(G, E^{\mathcal{G}})$ satisfying
\begin{enumerate}
\item[(1)] for all $a\in G$ : not $E^{\mathcal{G}} aa$.
\item[(2)] for all $a,b \in G$ : if $E^{\mathcal{G}} ab$ then $E^{\mathcal{G}} ba$.
\end{enumerate}
By \textrm{GRAPH} we denote the class of graphs. If only (1) is required, we speak of a \emph{digraph} (or, \emph{directed } \emph{graph}). The elements of $G$ are sometimes called \emph{points} or \emph{vertices}, the elements of $E^{\mathcal{G}}$ \emph{edges}.
\end{definition}

\begin{definition} \label{1.1.6} $\newline$
\begin{enumerate}
\item[(1)] Let $\mathcal{G}$ be a digraph. If $n\geq1$ and
$$E^{\mathcal{G}} a_0a_1, E^{\mathcal{G}} a_1a_2, \ldots, E^{\mathcal{G}} a_{n-1}a_n$$ then $a_0,\ldots,a_n$ is a \emph{path} from $a_0$ to $a_n$ of \emph{length} $n$ (in undirected graphs this is also a path from $a_n$ to $a_0$).
\item[(2)] If $a_0=a_n$ then $a_0,\ldots,a_n$ is a \emph{cycle}. $\mathcal{G}$ is \emph{acyclic} if it has no cycle.
\item[(3)] Let $\mathcal{G}$ be a graph. Write $a\sim b$ if $a=b$ or if there is a path from $a$ to $b$. Clearly, $\sim$ is an equivalence relation. The equivalence class of $a$ is called the (\emph{connected}) \emph{component} of $a$. $\mathcal{G}$ is \emph{connected} if $a\sim b$ for all $a,b \in G$, that is, if there is only one connected component. Let \textrm{CONN} be the class of connected graphs.
\item[(4)] Denote by $d(a,b)$ the shortest length of a path from $a$ to $b$; more precisely, define the \emph{distance } \emph{function} $d: G\times G \rightarrow \mathbb{N} \cup \{\infty\}$ by
    $$d(a,b)=\infty\;\;\;\text{iff}\;\;\;a\nsim b\; ;\qquad d(a,b)=0\;\;\;\text{iff}\;\;\;a=b\; ;$$and otherwise,$$d(a,b)=\mathrm{min}\{n\geq 1\;|\;\text{there is a path from }a\;\text{to }b\;\text{of length }n\}.$$
\end{enumerate}
\end{definition}

Having seen graphs let us now define strings and see how every ordered structure can be coded as a binary string (which makes it possible for a structure to be introduced as an input for a Turing machine).

\begin{definition}
A \emph{string} (also called a \emph{word}) of length $n$ on $k \geq 1$ symbols is an ordered structure of size $n$, over the vocabulary $\newline \tau = \{< , R_1,\ldots,R_{k}\}$, in which the unary relation $R_i$, $1\leq i \leq k$, is interpreted with the set of elements that represent positions in which the $i$-th symbol occurs. (Of course a string on $k$ symbols can also be represented as an ordered structure on a vocabulary with just $k-1$ unary relation symbols).
\end{definition}

For example the binary string $w = 011001$ is represented by the following ordered structure : $\mathcal{W} = (W,<^{\mathcal{W}},R^{\mathcal{W}})$, where $W=\{0,1,2,3,4,5\}$, and $R^{\mathcal{W}}=\{1,2,5\}$ (the positions in which $1$ occurs).

\begin{definition}(Binary Encoding of Structures)$\newline$
Let $\tau =\{<, R_1,\ldots,R_r,c_1,\ldots,c_s\}$ where each $R_i$ is of arity $a_i$, and let $\newline$ $\mathcal{A} = (\{0,1,\ldots,n-1\}, <^{\mathcal{A}}, R_1^{\mathcal{A}},\ldots,R_r^{\mathcal{A}},c_1^{\mathcal{A}},\ldots,c_s^{\mathcal{A}})$ be an ordered structure of vocabulary $\tau$. The relation $R_i^{\mathcal{A}}$ is a subset of $|\mathcal{A}|^{a_i}$, and this contains exactly $n^{a_i}$ tuples. We encode this relation as a binary string $bin^{\mathcal{A}}(R_i)$ of length $n^{a_i}$ where $``1"$ in a given position indicates that the corresponding tuple is in $R_i^{\mathcal{A}}$. Similarly, for each constant $c_j^{\mathcal{A}}$, its number is encoded as a binary string $bin^{\mathcal{A}}(c_j)$ of length $\left\lceil \log n \right\rceil$ (this binary string is its normal representation in binary). The binary encoding of the structure $\mathcal{A}$ is then just the concatenation of these binary strings coding its relations and constants,
$$bin(\mathcal{A}) = bin^{\mathcal{A}}(R_1)bin^{\mathcal{A}}(R_2) \ldots bin^{\mathcal{A}}(R_r)bin^{\mathcal{A}}(c_1)\ldots bin^{\mathcal{A}}(c_s)$$
We do not need any separators between the various relations and constants because the vocabulary $\tau$ and the length of $bin(\mathcal{A})$ determine where each section belongs. Observe that the length of $bin(\mathcal{A})$ is given by
      $$||bin(\mathcal{A})|| = n^{a_1}+\ldots + n^{a_r}+s \left\lceil \log n \right\rceil$$
In the special case where $\tau$ includes no relation symbols other than $<$, we pretend that there is a unary relation symbol that is always false. For example, if $\tau = \{<\}$, then $bin(\mathcal{A})=0^{||\mathcal{A}||}$. We do this to insure that the size of $bin(\mathcal{A})$ is at least as large as $||\mathcal{A}||$.
\end{definition}
\begin{example}
Consider the graph $\mathcal{G}:=(G,E^{\mathcal{G}},s^{\mathcal{G}},t^{\mathcal{G}})$, where, $G=\{0,1,2\}$, $E^{\mathcal{G}}=\{(0,1),(1,2)\}$, $s^{\mathcal{G}}=0$ and $t^{\mathcal{G}}=2$. Then its binary code is $010001000010$.
\end{example}

\section{First-Order Logic and Games}
\begin{definition} (Syntax of First-Order Logic)
Fix a vocabulary $\tau$. Each formula of first-order logic will be a string of symbols taken from the alphabet consisting of
\begin{enumerate}
\item[-] $v_1,v_2,v_3,\ldots$ (the \emph{variables})
\item[-] $\neg,\vee$ (the \emph{connectives not, or})
\item[-] $\exists$ (the \emph{existential quantifier})
\item[-] $=$ (the \emph{equality symbol})
\item[-] $),($
\item[-]the symbols in $\tau$.
\end{enumerate}

A \emph{term} of vocabulary $\tau$ is a variable or a constant in $\tau$. Henceforth, we often shall use the letters $x,y,z,\ldots$ for variables and $t,t_1,\ldots$ for terms.$\newline$
The \emph{formulas} of first-order logic of vocabulary $\tau$ are those strings which are obtained by finitely many applications of the following rules:
    \begin{enumerate}
       \item[(F1)] If $t_0$ and $t_1$ are terms then $t_0=t_1$ is a formula.
       \item[(F2)] If $R$ in $\tau$ is $n$-ary and $t_1,\ldots,t_n$ are terms then $Rt_1 \ldots Rt_n$ is a formula.
       \item[(F3)] If $\varphi$ is a formula then $\neg \varphi$ is a formula.
       \item[(F4)] If $\varphi$ and $\psi$ are formulas then $(\varphi \vee \psi)$ is a formula.
       \item[(F5)] If $\varphi$ is a formula and $x$ a variable then $\exists x \varphi$ is a formula.
    \end{enumerate}
Formulas obtained by (F1) or (F2) are called \emph{atomic} formulas. For formulas $\varphi$ and $\psi$ we use $(\varphi\wedge\psi), (\varphi\rightarrow\psi), (\varphi\leftrightarrow\psi)$, and $\forall x \varphi$ as abbreviations for the formulas $\neg(\neg \varphi \vee \neg \psi), (\neg\varphi\vee\psi), ((\neg\varphi\vee\psi) \wedge (\neg \psi \vee\varphi))$, and $\neg \exists x \neg \varphi$, respectively.$\newline$
Denote by $\textrm{FO}[\tau]$ the set of formulas of first-order logic of vocabulary $\tau$; and denote it by just $\textrm{FO}$ when the vocabulary is understood from the context.
\end{definition}

\begin{definition} (free variables) $\newline$
$free(\varphi)$ the set of \emph{free} variables of a formula $\varphi$ :
           \begin{enumerate}
              \item[-] if $\varphi$ is atomic then the set $free(\varphi)$ is the set of variables occurring in $\varphi$.
              \item[-]$free(\neg \varphi) := free(\varphi)$
              \item[-]$free(\varphi\vee\psi):= free(\varphi)\cup free(\psi)$
              \item[-]$free(\exists x \varphi):=free(\varphi)\backslash \{x\}$.
           \end{enumerate}
We write $\varphi(x_1,\ldots,x_n)$ to indicate that $x_1,\ldots,x_n$ are distinct and $\newline free(\varphi) \subseteq \{x_1,\ldots,x_n\}$ without implying that all $x_i$ are actually free in $\varphi$.
A \emph{bound} occurrence of a variable in a formula is an occurrence that lies in the scope of a corresponding quantifier.
A \emph{sentence} is a formula without free variables.
\end{definition}

\begin{definition} (Semantics of First-Order Logic) $\newline$
 Let $\mathcal{A}$ be a $\tau$-structure. An \emph{assignment} in $\mathcal{A}$ is a function $\alpha$ with domain the set of variables and with values in $A$, $\alpha : \{v_n \; | \; n\geq 1\}\rightarrow A$. Extend $\alpha$ to a function defined for all terms by setting $\alpha(c):= c^{\mathcal{A}}$ for all constants in $\tau$. Denote by $\alpha\frac{a}{x}$ the assignment that agrees with $\alpha$ on all variables except that $\alpha\frac{a}{x}(x)=a$.\\
    We define the relation $\mathcal{A}\vDash \varphi[\alpha]$, called the satisfaction relation (``the assignment $\alpha$ \emph{satisfies} the formula $\varphi$ in $\mathcal{A}$" or ``$\varphi$ is \emph{true} in $\mathcal{A}$ under $\alpha$"), as follows:\\
    $\mathcal{A}\vDash t_1=t_2 [\alpha]\;\;\;\; \text{iff} \;\;\;\alpha(t_1)=\alpha(t_2)$\\
    $\mathcal{A} \vDash Rt_1...t_n[\alpha] \;\;\; \text{iff} \;\;\; R^{\mathcal{A}}\alpha(t_1)...\alpha(t_n)$\\
    $\mathcal{A}\vDash \neg \varphi[\alpha] \;\;\;\;\;\; \;\;\;\;\text{iff} \;\;\; \text{not}\; \mathcal{A}\vDash\varphi[\alpha]$\\
    $\mathcal{A}\vDash (\varphi\vee\psi)[\alpha] \;\;\; \text{iff} \;\;\; \mathcal{A}\vDash\varphi[\alpha] \;\text{or}\; \mathcal{A}\vDash\psi[\alpha]$\\
    $\mathcal{A}\vDash \exists x\varphi[\alpha]\;\;\; \;\;\;\;\;\text{iff} \;\;\;\text{there is an} \; a\in A\; \text{such that} \; \mathcal{A}\vDash\varphi[\alpha\frac{a}{x}]$\\\\
    Note that the truth or falsity of $\mathcal{A}\vDash\varphi[\alpha]$ depends only on the values of $\alpha$ for those variables $x$ which are free in $\varphi$. That is, if $\alpha_1(x)=\alpha_2(x)$ for all $x\in free(\varphi)$, then $\mathcal{A}\vDash\varphi[\alpha_1]$ iff $\mathcal{A}\vDash \varphi[\alpha_2]$. Thus, if $\varphi=\varphi(x_1,\ldots,x_n)$ and $a_1=\alpha(x_1),\ldots,a_n=\alpha(x_n)$, then we may write $\mathcal{A}\vDash\varphi[a_1,\ldots,a_n]$ for $\mathcal{A}\vDash\varphi[\alpha]$. In particular, if $\varphi$ is a sentence, then the truth or falsity of $\mathcal{A}\vDash\varphi[\alpha]$ is completely independent of $\alpha$. Thus we write $\mathcal{A}\vDash\varphi$ (read : $\mathcal{A}$ is a \emph{model} of $\varphi$, or $\mathcal{A}$ \emph{satisfies} $\varphi$), if for some (hence every) assignment $\alpha$, $\mathcal{A}\vDash \varphi[\alpha]$.
Fromulas $\varphi$ and $\psi$ are \emph{equivalent} if $\varphi\leftrightarrow\psi$ is true in all structures under all assignments (we sometimes write $\varphi \equiv \psi$ to express this).
\end{definition}

It is assumed that first-order logic contains two zero-ary relation symbols $\textrm{T}, \textrm{F}$. In every structure, $\textrm{T}$ and $\textrm{F}$ are interpreted as $\textrm{TRUE}$ (i.e., as being true) and $\textrm{FALSE}$, respectively ($\textrm{TRUE}$ corresponds to the zero-ary relation $\{\emptyset\}$, and $\textrm{FALSE}$ to the zero-ary relation $\emptyset$). Hence, the atomic formula $\textrm{T}$ is equivalent to $\exists x (x=x)$ and $\textrm{F}$ to $\neg \exists x (x=x)$.$\newline$

If $\Phi = \{\varphi_1,\ldots,\varphi_n\}$ we sometimes write $\bigwedge \Phi$ for $\varphi_1 \wedge \ldots \wedge \varphi_n$ and $\bigvee \Phi$ for $\varphi_1 \vee\ldots \vee\varphi_n$. In case $\Phi = \emptyset$ we set $\bigwedge \Phi =\textrm{T}$ and $\bigvee \Phi = \textrm{F}$. Then, for arbitrary finite $\Phi$,
$$\mathcal{A} \vDash \bigwedge \Phi \;\;\; \text{iff} \;\;\; \text{for all}\; \varphi \in \Phi, \; \mathcal{A}\vDash \varphi. $$

\begin{definition} (Quantifier Rank) $\newline$
The \emph{quantifier rank} $qr(\varphi)$ of a formula $\varphi$ is the maximum number of nested quantifiers occurring in it :
\begin{enumerate}
\item[-] $qr(\varphi):=0$, if $\varphi$ is atomic
\item[-] $qr(\neg\varphi):=qr(\varphi)$
\item[-] $qr(\varphi\vee\psi):= max\{qr(\varphi), qr(\psi)\}$
\item[-] $qr(\exists x \varphi):= qr(\varphi)+1$
\end{enumerate}
\end{definition}

\begin{definition}
For structures $\mathcal{A}$ and $\mathcal{B}$ and $m\in \mathbb{N}$ we write $\mathcal{A} \equiv_m \mathcal{B}$ and say that $\mathcal{A}$ and $\mathcal{B}$ are $m$-equivalent, if $\mathcal{A}$ and $\mathcal{B}$ satisfy the same first-order sentences of quantifier rank $\leq m$. They are \emph{elementarily equivalent} if they satisfy the same first-order sentences, in symbols, $\mathcal{A} \equiv \mathcal{B}$.
\end{definition}
\begin{definition}
We say that a class $K$ of structures is \emph{definable} (or that its characteristic property is \emph{expressible}) in a logic $\mathcal{L}$ if there is a sentence $\varphi \in \mathcal{L}$ such that for every structure $\mathcal{A}$ : $\mathcal{A} \in K \;\;\; \text{iff} \;\;\; \mathcal{A}\vDash \varphi$.
\end{definition}
\begin{proviso}
Throughout the text all classes $K$ of structures considered will tacitly be assumed to be closed under isomorphisms, i.e.$\newline$ $\mathcal{A}\in K$ and $\mathcal{A}\cong\mathcal{B}$ imply $\mathcal{B}\in K$.
\end{proviso}

Now we define the \emph{Ehrenfeucht-Fra\"{i}\emph{ss}\'{e} games}.

\begin{definition} (Games : Ehrenfeucht)$\newline$
Let $\mathcal{A}$ and $\mathcal{B}$ be structures, $\overline{a} \in A^s$, $\overline{b} \in B^s$, and $m\in \mathbb{N}$. The \emph{Ehrenfeucht} \emph{game} $G_m(\mathcal{A}, \overline{a}, \mathcal{B}, \overline{b})$ is played by two players called \emph{Spoiler} and \emph{Duplicator}. Each player has to make $m$ moves in the course of a play. The players take turns. In the $i$-th move Spoiler first selects a structure, $\mathcal{A}$ or $\mathcal{B}$, and an element in this structure. If Spoiler chooses $e_i$ in $\mathcal{A}$ then Duplicator in its $i$-th move must choose an element $f_i$ in $\mathcal{B}$. If Spoiler chooses $f_i$ in $\mathcal{B}$ then Duplicator must choose an element $e_i\in \mathcal{A}$.$\newline$
At the end, elements $e_1,\ldots,e_m$ in $\mathcal{A}$ and $f_1,\ldots,f_m$ in $\mathcal{B}$ have been chosen. Duplicator \emph{wins} iff $\overline{a}\;\overline{e} \mapsto \overline{b}\;\overline{f} \in \textrm{Part}(\mathcal{A},\mathcal{B})$ (in case $m=0$ we just require that $\overline{a} \mapsto \overline{b} \in \textrm{\textrm{Part}}(\mathcal{A},\mathcal{B})$). Otherwise Spoiler wins. Equivalently, Spoiler wins if, after some $i\leq m$, $\overline{a}e_1 \ldots e_i \mapsto \overline{b}f_1\ldots f_i$ is not a partial isomorphism. We say that a player, Spoiler or Duplicator, has a \emph{winning strategy} in $G_m(\mathcal{A}, \overline{a}, \mathcal{B}, \overline{b})$, or shortly, that he \emph{wins} $G_m(\mathcal{A}, \overline{a}, \mathcal{B}, \overline{b})$, if it is possible for it to win each play whatever choices are made by the opponent. If $s=0$ (and hence $\overline{a}$ and $\overline{b}$ are empty), we denote the game by $G_m(\mathcal{A},\mathcal{B})$.
\end{definition}

\begin{example}
Spoiler has a winning strategy in $G_3(\mathcal{A},\mathcal{B})$, where, $\newline\mathcal{A}:=(\{0,1\},<)$ and $\mathcal{B}:=(\{0,1,2\},<)$ and in both cases $<$ denotes the natural ordering. Spoiler can begin by choosing $0$ in $\mathcal{B}$ and Duplicator has to reply with $0$ in $\mathcal{A}$ because if it replies with $1$ then, no matter what element (other than $0$) Spoiler chooses from $\mathcal{B}$ in the second move, Duplicator will not find an element in $\mathcal{A}$ greater than $1$ to reply with. In its second move Spoiler chooses $2$ from $\mathcal{B}$ and Duplicator has to reply with the only element in $\mathcal{A}$ greater than $0$, which is $1$. In its third move Spoiler chooses $1$ from $\mathcal{B}$ (which is between $0$ and $2$) but Duplicator cannot find an element in $\mathcal{A}$ between $0$ and $1$ (which correspond respectively to $0$ and $2$ in $\mathcal{B}$).
\end{example}

\begin{definition} (Games : Algebraic Version - Fra\"{\i}ss\'{e}) $\newline$
Structures $\mathcal{A}$ and $\mathcal{B}$ are said to be $m$-\emph{isomorphic}, written $\mathcal{A} \cong_m \mathcal{B}$, if there is a sequence $(I_j)_{j\leq m}$ with the following properties:
\begin{enumerate}
\item[(a)] Every $I_j$ is a nonempty set of partial isomorphisms from $\mathcal{A}$ to $\mathcal{B}$.
\item[(b)] (\emph{Forth property}) For every $j < m$, $p\in I_{j+1}$, and $a\in A$ there is $q\in I_j$ such that $q\supseteq p$ and $a\in do(q)$.
\item[(c)] (\emph{Back property}) For every $j < m$, $p\in I_{j+1}$, and $b\in B$ there is $q\in I_j$ such that $q\supseteq p$ and $b\in rg(q)$.
\end{enumerate}
If $(I_j)_{j\leq m}$ has properties (a), (b), and (c), we write $(I_j)_{j\leq m} : \mathcal{A} \cong_m \mathcal{B}$ and say that $\mathcal{A}$ and $\mathcal{B}$ are $m$-\emph{isomorphic via} $(I_j)_{j\leq m}$.
\end{definition}

\begin{theorem} \cite{Ehren}, \cite{Fraisse} $\newline$
For structures $\mathcal{A}$ and $\mathcal{B}$, $\overline{a}\in A^s$, $\overline{b}\in B^s$, and $m\geq 0$ the following are equivalent :
\begin{enumerate}
\item[(i)] Duplicator wins $G_m(\mathcal{A},\overline{a},\mathcal{B},\overline{b})$.
\item[(ii)] There is $(I_j)_{j\leq m}$ with $\overline{a} \mapsto \overline{b} \in I_m$ such that $(I_j)_{j\leq m} : \mathcal{A} \cong_m \mathcal{B}$.
\item[(iii)] $\overline{a}$ satisfies in $\mathcal{A}$ the same formulas of quantifier rank $\leq m$ as $\overline{b}\in \mathcal{B}$.
\end{enumerate}
\end{theorem}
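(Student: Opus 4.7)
The plan is to prove the equivalences (i) $\Leftrightarrow$ (ii) by a direct translation between strategies and back-and-forth systems, and (ii) $\Leftrightarrow$ (iii) by induction, with the hardest direction (iii) $\Rightarrow$ (ii) resting on the finiteness, up to logical equivalence, of first-order formulas of bounded quantifier rank over a finite relational vocabulary.

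For (i) $\Leftrightarrow$ (ii), for each $0 \leq j \leq m$ I would set $I_j$ to consist of those $p \in \textrm{Part}(\mathcal{A},\mathcal{B})$ with $do(p) = \{c_1,\ldots,c_t\}$ and $p(c_i) = d_i$ such that Duplicator wins $G_j(\mathcal{A},\overline{c},\mathcal{B},\overline{d})$. The forth property for $(I_j)_{j\leq m}$ then encodes precisely Duplicator's ability to respond, with $j+1$ moves remaining, to a Spoiler move in $\mathcal{A}$ by producing a position still winning with $j$ moves left; the back property is the symmetric statement for Spoiler's moves in $\mathcal{B}$. Conversely, given $(I_j)_{j\leq m}$ with $\overline{a} \mapsto \overline{b} \in I_m$, Duplicator's strategy is to maintain the invariant that after $m-j$ rounds the current partial map lies in $I_j$; the forth/back properties preserve this invariant, and at the end $I_0 \subseteq \textrm{Part}(\mathcal{A},\mathcal{B})$ guarantees a win.

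For (ii) $\Rightarrow$ (iii), I would prove by induction on $\varphi(x_1,\ldots,x_s)$ of quantifier rank $\leq j$ that whenever $p \in I_j$ contains $a_i \mapsto b_i$ for $i=1,\ldots,s$, then $\mathcal{A} \vDash \varphi[\overline{a}]$ iff $\mathcal{B} \vDash \varphi[\overline{b}]$. Atomic formulas are handled because $p$ is a partial isomorphism; Boolean cases are routine; for $\exists x\,\psi$ of rank $\leq j+1$, a witness $a \in A$ in $\mathcal{A}$ is extended via the forth property to $q \in I_j$ with $a \in do(q)$, and the induction hypothesis on $\psi$ delivers the corresponding witness $q(a)$ in $\mathcal{B}$; the back property gives the other direction.

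For (iii) $\Rightarrow$ (ii), I would set $I_j$ to be the set of $q \in \textrm{Part}(\mathcal{A},\mathcal{B})$ such that the enumeration of $do(q)$ satisfies in $\mathcal{A}$ the same formulas of quantifier rank $\leq j$ as its image under $q$ in $\mathcal{B}$. The hypothesis places $\overline{a} \mapsto \overline{b}$ in $I_m$, and agreement on atomic ($qr=0$) formulas forces $I_j \subseteq \textrm{Part}(\mathcal{A},\mathcal{B})$. The main obstacle is the forth property: given $p \in I_{j+1}$ with $do(p) = \{c_1,\ldots,c_t\}$ and $a \in A$, one must produce $b \in B$ with $p \cup \{(a,b)\} \in I_j$. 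For this I would invoke the fact that, over a finite relational vocabulary, there are only finitely many formulas of quantifier rank $\leq j$ in $t+1$ free variables up to logical equivalence (proved by a straightforward induction on $j$). This lets me form a finite conjunction $\chi(\overline{x},y)$ of representatives of all such formulas satisfied by $(\overline{c},a)$ in $\mathcal{A}$; then $\exists y\, \chi(\overline{x},y)$ has quantifier rank $\leq j+1$, holds at $\overline{c}$ in $\mathcal{A}$, and hence by $p \in I_{j+1}$ holds at $p(\overline{c})$ in $\mathcal{B}$, yielding the required witness $b$. The back property is symmetric.
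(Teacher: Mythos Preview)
The paper does not supply a proof of this theorem; it is stated with citations to \cite{Ehren} and \cite{Fraisse} and immediately followed by its corollary, so there is no in-paper argument to compare against. Your proposal is the standard textbook proof (essentially the one in Ebbinghaus--Flum \cite{Flum}, which the paper draws on throughout) and is correct in outline: the translation between strategies and back-and-forth systems for (i) $\Leftrightarrow$ (ii), the structural induction for (ii) $\Rightarrow$ (iii), and the use of the finiteness (up to equivalence) of formulas of bounded rank in finitely many free variables to build a ``Hintikka-style'' witness formula for (iii) $\Rightarrow$ (ii) are exactly the expected ingredients.

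One small point worth tightening: you say ``finite relational vocabulary'' for the finiteness lemma, but the paper's vocabularies may contain constants. This is harmless---terms are still just variables and constants, so atomic formulas in a fixed finite variable set are still finite up to equivalence---but since in the paper's setup every $p \in \textrm{Part}(\mathcal{A},\mathcal{B})$ must already have $c^{\mathcal{A}} \in do(p)$ for each constant $c$, you should note that the maps in your $I_j$ are to be understood as extended by $c^{\mathcal{A}} \mapsto c^{\mathcal{B}}$ (which is consistent because agreement on rank-$0$ formulas forces $a_i = c^{\mathcal{A}}$ iff $b_i = c^{\mathcal{B}}$).
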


\begin{corollary}$\newline$
For structures $\mathcal{A}$, $\mathcal{B}$ and $m\geq 0$ the following are equivalent :
\begin{enumerate}
\item[(i)] Duplicator wins $G_m(\mathcal{A},\mathcal{B})$.
\item[(ii)] $\mathcal{A} \cong_m \mathcal{B}$.
\item[(iii)] $\mathcal{A} \equiv_m \mathcal{B}$.
\end{enumerate}
\end{corollary}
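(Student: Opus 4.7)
\begin{demo}{Proof Plan}
The statement is the $s=0$ specialization of the preceding Ehrenfeucht--Fra\"\i ss\'e theorem, so the plan is to reduce each of the three equivalences directly to the corresponding clause of that theorem, with only minor bookkeeping.

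First I would handle (i) $\Leftrightarrow$ (iii). Taking $\overline{a}$ and $\overline{b}$ to be the empty tuples in the theorem, clause (i) of the theorem literally reads ``Duplicator wins $G_m(\mathcal{A},\mathcal{B})$'', i.e.\ (i) of the corollary. Clause (iii) of the theorem becomes: the empty tuple satisfies in $\mathcal{A}$ the same formulas of quantifier rank $\leq m$ as in $\mathcal{B}$; but a formula $\varphi(x_1,\ldots,x_n)$ with $n=0$ is just a sentence, so this clause is exactly $\mathcal{A} \equiv_m \mathcal{B}$. Hence (i) $\Leftrightarrow$ (iii).

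Next I would handle (i) $\Leftrightarrow$ (ii). Again with empty tuples, clause (ii) of the theorem asserts the existence of $(I_j)_{j\leq m} : \mathcal{A} \cong_m \mathcal{B}$ \emph{with the extra requirement} $\emptyset \in I_m$, whereas the definition of $\mathcal{A} \cong_m \mathcal{B}$ only asks for some such sequence. The direction (i) $\Rightarrow$ (ii) is therefore immediate from the theorem. For (ii) $\Rightarrow$ (i), the point to check is that the extra requirement $\emptyset \in I_m$ can always be arranged. Given any sequence $(I_j)_{j\leq m}:\mathcal{A}\cong_m\mathcal{B}$, I would set $I'_j := I_j \cup \{\emptyset\}$ for each $j$ and verify back and forth: the empty map is vacuously a partial isomorphism, so property (a) holds; and for any $a\in A$ and any $j<m$, picking an arbitrary $p\in I_{j+1}$ (which exists since $I_{j+1}$ is nonempty) and applying the forth property of the original sequence to $p$ and $a$ yields some $q\in I_j\subseteq I'_j$ with $a\in do(q)$ extending $\emptyset$. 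Symmetrically for the back property. Thus $(I'_j)_{j\leq m}$ witnesses $\mathcal{A}\cong_m\mathcal{B}$ and has $\emptyset\in I'_m$, so the theorem gives Duplicator a winning strategy in $G_m(\mathcal{A},\mathcal{B})$.

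There is no real obstacle in the argument; the only subtlety is the mismatch between the definition of $\cong_m$ (which demands no particular element in $I_m$) and clause (ii) of the theorem (which fixes $\overline{a}\mapsto \overline{b}\in I_m$), and this is resolved by the trivial closure-under-the-empty-map observation above.
\end{demo}
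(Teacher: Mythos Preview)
Your proposal is correct and is exactly the intended approach: the paper states the corollary without proof, treating it as the immediate $s=0$ specialization of the preceding theorem, and your bookkeeping (including the $I'_j := I_j \cup \{\emptyset\}$ trick to arrange $\emptyset\in I_m$) is the natural way to spell out the details the paper leaves implicit.
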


The equivalence of (ii) and (iii) is known as \emph{Fra\"{\i}ss\'{e}'s Theorem}, and the equivalence of (i) and (ii) shows that Ehrenfeucht's game and Fra\"{\i}ss\'{e}'s game are different formulations of the same thing, therefore, one often speaks of the \emph{Ehrenfeucht-Fra\"{i}\emph{ss}\'{e} game}.
$\newline$
The following theorem with the previous corollary illustrate how games can be used in proving non-expressibility :
\begin{theorem}\cite{Flum}\label{1.2.10}$\newline$
For a class $K$ of structures the following are equivalent :
\begin{enumerate}
\item[(i)] $K$ is not definable in first-order logic.
\item[(ii)] For each $m$ there are structures $\mathcal{A}$ and $\mathcal{B}$ such that:
              $$\mathcal{A}\in K, \mathcal{B}\notin K \;\;\;\;\; \text{and} \;\;\;\;\; \mathcal{A}\equiv_m \mathcal{B}.$$
\end{enumerate}
\end{theorem}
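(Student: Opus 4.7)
The plan is to prove each implication by contraposition, leveraging the fundamental fact that, over a finite vocabulary, there are only finitely many inequivalent first-order sentences of a given quantifier rank.

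For $\neg$(i) $\Rightarrow$ $\neg$(ii): Suppose $K$ is first-order definable, say by a sentence $\varphi$, and set $m := qr(\varphi)$. Then any $\mathcal{A} \in K$ and $\mathcal{B} \notin K$ must disagree on $\varphi$, a sentence of rank $\leq m$, so $\mathcal{A} \not\equiv_m \mathcal{B}$. This kills (ii) at that particular $m$. This direction is the easy one.

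For $\neg$(ii) $\Rightarrow$ $\neg$(i): Assume there is an $m$ such that no $\mathcal{A} \in K$ is $\equiv_m$-equivalent to any $\mathcal{B} \notin K$. The key technical step is to establish, by induction on $m$, that for the fixed finite vocabulary $\tau$ there are only finitely many sentences of quantifier rank $\leq m$ up to logical equivalence. The base case uses that with a bounded supply of variables there are only finitely many atomic formulas; the inductive step uses that (a) boolean combinations over a finite set of formulas yield only finitely many inequivalent formulas, and (b) $\exists x\,\psi$ ranges over finitely many inequivalent $\psi$ by induction hypothesis. From this one extracts, for each structure $\mathcal{A}$, a \emph{characteristic sentence} (Hintikka formula) $\chi_\mathcal{A}^m$ of quantifier rank $\leq m$ with the property that
\[
\mathcal{B} \vDash \chi_\mathcal{A}^m \quad\text{iff}\quad \mathcal{B} \equiv_m \mathcal{A},
\]
constructed, e.g., as (a representative of) the conjunction of all rank-$\leq m$ sentences true in $\mathcal{A}$.

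Now set
\[
\chi \;:=\; \bigvee\{\chi_\mathcal{A}^m : \mathcal{A} \in K\}.
\]
By the finiteness established above, this is (equivalent to) a finite disjunction, hence a legitimate first-order sentence. If $\mathcal{B} \in K$, then $\chi_\mathcal{B}^m$ is among the disjuncts and $\mathcal{B} \vDash \chi_\mathcal{B}^m$, so $\mathcal{B} \vDash \chi$. Conversely, if $\mathcal{B} \vDash \chi$, some $\mathcal{A} \in K$ satisfies $\mathcal{B} \vDash \chi_\mathcal{A}^m$, hence $\mathcal{B} \equiv_m \mathcal{A}$; the hypothesis on $m$ then forces $\mathcal{B} \in K$. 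So $\chi$ defines $K$, contradicting (i).

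The main obstacle is the finiteness-up-to-equivalence of rank-$\leq m$ sentences and the concomitant existence of Hintikka/characteristic formulas; this is a purely syntactic induction on $m$ over the formula-building rules (F1)--(F5), but it is the step that does the real work. Once available, both directions reduce to direct bookkeeping with $\equiv_m$-classes.
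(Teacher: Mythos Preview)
The paper does not actually give a proof of this theorem; it is stated with a citation to \cite{Flum} and immediately followed by Example~\ref{1.2.13}, so there is no ``paper's own proof'' to compare against. Your argument is correct and is precisely the standard proof one finds in the cited reference: the easy direction via $m:=qr(\varphi)$, and the converse via the finiteness (up to equivalence) of rank-$\leq m$ sentences and the resulting Hintikka (characteristic) sentences $\chi^m_{\mathcal{A}}$, whose finite disjunction over $K$ defines $K$.
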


\begin{example}\cite{Flum} \label{1.2.13}
Let $\tau = \{<,min,max\}$. Suppose that $\mathcal{A}$ and $\mathcal{B}$ are ordered $\tau$-structures, $||\mathcal{A}||>2^m$ and $||\mathcal{B}|| > 2^m$. Then $\mathcal{A} \cong_m \mathcal{B}$. Hence, the class of orderings of even cardinality is not definable in first-order logic.
For a \emph{proof}, given any ordering $\mathcal{C}$, we define its distance function $d$ by
$$d(a,a^{\prime}) := ||\{b\in C \; | \; (a<b\leq a^{\prime}) \; \text{or} \; (a^{\prime} < b \leq a)\}||.$$

And, for $j\geq 0$, we introduce the "truncated" $j$-distance function $d_j$ on $C\times C$ by

\[
 d_j(a,a^{\prime}): =
  \begin{cases}
   d(a,a^{\prime}) & \text { if }  d(a,a^{\prime}) < 2^j \\
    \infty   &  \text {otherwise.}
  \end{cases}
\]

Now suppose that $\mathcal{A}$ and $\mathcal{B}$ are orderings with $||\mathcal{A}||,||\mathcal{B}|| > 2^m$. For $j \leq m$ set
$$I_j := \{p \in \textrm{Part}(\mathcal{A},\mathcal{B}) \; | \; d_j(a,a^{\prime}) = d_j(p(a),p(a^{\prime})) \; \text{for} \; a,a^{\prime} \in do(p)\}.$$

Then $(I_j)_{j\leq m} : \mathcal{A} \cong_m \mathcal{B}$ : By assumption on the cardinalities of $\mathcal{A}$ and $\mathcal{B}$ we have $\{(min^{\mathcal{A}},min^{\mathcal{B}}),(max^{\mathcal{A}},max^{\mathcal{B}})\} \in I_j$ for every $j \leq m$. To give a proof of the forth property of $(I_j)_{j\leq m}$ (the back property can be proven analogously), suppose $j<m$, $p\in I_{j+1}$, and $a\in A$. We distinguish two cases, depending on whether or not the following condition
$$(*) \;\;\; \text{there is an} \; a^{\prime}\in do(p) \; \text{such that}\; d_j(a,a^{\prime}) < 2^j$$
is satisfied. $\newline$
If $(*)$ holds then there is exactly one $b\in B$ for which $p\cup\{(a,b)\}$ is a partial isomorphism preserving $d_j$-distances. Now assume that $(*)$ does not hold and let $do(p)=\{a_1,\ldots,a_r\}$ with $a_1<\ldots <a_r$. We restrict ourselves to the case $a_i<a<a_{i+1}$ for some $i$. Then, $\newline$ $d_j(a_i,a)=\infty$ and $d_j(a,a_{i+1})=\infty$; hence $d_{j+1}(a_i,a_{i+1})=\infty$ and therefore, $d_{j+1}(p(a_i),p(a_{i+1}))=\infty$. Thus there is a $b$ such that $p(a_i)<b<p(a_{i+1})$, $d_j(p(a_i),b)=\infty$, and $d_j(b,p(a_{i+1}))=\infty$. One easily verifies that $q := p \cup \{(a,b)\}$ is a partial isomorphism in $I_j$.
\end{example}

\begin{example} \cite{Flum}
For $l\geq 1$, let $\mathcal{G}_l$ be the graph given by a cycle of length $l+1$. To be precise, set$\newline$
$G_l := \{0,\ldots,l\}$, $E^{G_l} := \{(i,i+1) \; | \; i< l\} \cup \{(i+1,i)\; | \; i < l\} \cup \{(0,l),(l,0)\}$.
Thus, for $l,k\geq 1$, the disjoint union $\mathcal{G}_l \dot{\cup} \mathcal{G}_k$ consists of a cycle of length $l+1$ and of a cycle of length $k+1$. We show :
$$\text{If} \;\; l\geq 2^m \;\; \text{then} \;\; \mathcal{G}_l \cong_m \mathcal{G}_l \dot{\cup} \mathcal{G}_l.$$
In fact, for $j\in \mathbb{N}$, we define the distance function $d_j$ on a graph $\mathcal{G}$ by

\[
 d_j(a,b) : =
  \begin{cases}
   d(a,b) & \text { if }  d(a,b) < 2^{j+1} \\
    \infty   &  \text {otherwise.}
  \end{cases}
\]

(where $d$ denotes the distance function on $\mathcal{G}$ as introduced in $\newline$ Definition \ref{1.1.6}). To show that $\mathcal{G}_l$ and $\mathcal{G}_l \dot{\cup} \mathcal{G}_l$ are $m$-isomorphic, one verifies $(I_j)_{j\leq m} :\mathcal{G}_l \cong_m \mathcal{G}_l \dot{\cup} \mathcal{G}_l$ where $I_j$ is the set of $p\in \textrm{Part}(\mathcal{G}_l,\mathcal{G}_l \dot{\cup} \mathcal{G}_l)$ with
$||do(p)||\leq m-j \;\; \text{and} \;\; d_j(a,b)=d_j(p(a),p(b)) \; \text{for} \; a,b\in do(p)$. (The proof is similar to that of example \ref{1.2.13}). $\newline$
Then by Theorem \ref{1.2.10}, the class $\textrm{CONN}$ of connected graphs is not definable in first-order logic, since for each $m$ we have
$$\mathcal{G}_{2^m}\in \textrm{CONN},\; \mathcal{G}_{2^m} \dot{\cup} \mathcal{G}_{2^m} \notin \textrm{CONN}, \; \mathcal{G}_{2^m} \equiv_m \mathcal{G}_{2^m} \dot{\cup} \mathcal{G}_{2^m}.$$
Consequently, the path relation, the transitive closure of the edge relation on the class $\textrm{GRAPH}$, is not first-order definable.
In fact, suppose $\psi(x,y)$ is a first-order formula defining the path relation on $\textrm{GRAPH}$. Then $\textrm{CONN}$ would be the class of models of  $\forall x \forall y (\neg x=y \rightarrow \psi(x,y))$ (and the graph axioms).
\end{example}

\begin{definition} (Second-Order Logic) $\newline$
\emph{Second-order logic}, $\textrm{SO}$, is an extension of first-order logic which allows to quantify over relations. In addition to the symbols of first-order logic, its alphabet contains, for each $n\geq 1$, countably many $n$-ary \emph{relation} (or \emph{predicate}) \emph{variables} $V_1^n,V_2^n,\ldots$. To denote relation variables we use letters $X,Y,\ldots$. The set of second-order formulas of a vocabulary $\tau$ is the set generated by the rules for first-order formulas extended by :
\begin{enumerate}
\item[-] If $X$ is $n$-ary and $t_1,\ldots,t_n$ are terms then $Xt_1\ldots t_n$ is a formula.
\item[-] If $\varphi$ is a formula and $X$ is a relation variable then $\exists X \varphi$ is a formula.
\end{enumerate}

\end{definition}
The free occurrence of a variable or of a relation variable in a second-order formula is defined in the obvious way and the notion of satisfaction is extended canonically. Then, given $\varphi = \varphi(x_1,\ldots,x_n,Y_1,\ldots,Y_k)$ with free (individual and relation) variables among $x_1,\ldots,x_n,Y_1,\ldots,Y_k$, a $\tau$-structure $\mathcal{A}$, elements $a_1,\ldots,a_n\in A$, and relations $R_1,\ldots,R_k$ over $A$ of arities corresponding to $Y_1,\ldots,Y_k$, respectively, $$\mathcal{A} \vDash \varphi[a_1,\ldots,a_n,R_1,\ldots,R_k]$$
means that $a_1,\ldots,a_n$ together with $R_1,\ldots,R_k$ satisfy $\varphi$ in $\mathcal{A}$.

\section{Fixed-Point Logics}

Here in this section we introduce some of the well-known fixed-point extensions of first-order logic. Let us first pave the road to the definitions:
$\newline\newline$
Fix a vocabulary $\tau$. Consider a second-order formula
 $\varphi(R,\overline{Y},x_1,\ldots,x_k,\overline{u})$, where $R$ is a relation variable of arity $k$, and let $\overline{T}$ be an interpretation of $\overline{Y}$ and $\overline{b}$ an interpretation of $\overline{u}$,  $\newline\newline$
$\bullet$ For any $\tau$-structure $\mathcal{A}$, the function
$$F^{\varphi,\mathcal{A}} : \mathcal{P}(A^k) \rightarrow \mathcal{P}(A^k)$$
$$F^{\varphi,\mathcal{A}}(S) :=\{\overline{a}\in A^k \; | \; \mathcal{A}\vDash \varphi[S,\overline{T},a_1,\ldots,a_k,\overline{b}]\}$$
gives rise to a sequence of $k$-ary relations :
$$\emptyset, F^{\varphi,\mathcal{A}} (\emptyset), (F^{\varphi,\mathcal{A}})^2(\emptyset), (F^{\varphi,\mathcal{A}})^3(\emptyset),\ldots$$
Denote its members by $F_0^{\varphi,\mathcal{A}},F_1^{\varphi,\mathcal{A}},F_2^{\varphi,\mathcal{A}},\ldots$, i.e., $F_0^{\varphi,\mathcal{A}} = \emptyset$ and $F_{n+1}^{\varphi,\mathcal{A}}=F^{\varphi,\mathcal{A}}(F_n^{\varphi,\mathcal{A}})$. $F_n^{\varphi,\mathcal{A}}$ is called the $n$-th stage of $F^{\varphi,\mathcal{A}}$ (or of $\varphi$ in $\mathcal{A}$ under the given interpretations of $\overline{Y}$ and $\overline{u}$).
Suppose that there is an $n_0\in \mathbb{N}$ such that $F_{n_0+1}^{\varphi,\mathcal{A}}=F_{n_0}^{\varphi,\mathcal{A}}$, that is, $F^{\varphi,\mathcal{A}}(F_{n_0}^{\varphi\mathcal{A}})=F_{n_0}^{\varphi,\mathcal{A}}$. Then, $F_m^{\varphi,\mathcal{A}}=F_{n_0}^{\varphi,\mathcal{A}}$ for all $m\geq n_0$. We denote $F_{n_0}^{\varphi,\mathcal{A}}$ by $F_{\infty}^{\varphi,\mathcal{A}}$ and say that \emph{the fixed-point} $F_{\infty}^{\varphi,\mathcal{A}}$ of $F^{\varphi,\mathcal{A}}$ \emph{exists} (It may be also called the fixed-point of $\varphi$ in $\mathcal{A}$ "under the given interpretations of $\overline{Y}$ and $\overline{u}$". But if the free variables of $\varphi$ are among $R$ and $x_1,\ldots,x_k$ only, i.e., there are no extra variables to be interpreted, we just call it the fixed-point of $\varphi$ in $\mathcal{A}$).$\newline$ In case the fixed-point $F_{\infty}^{\varphi,\mathcal{A}}$ does not exist, we agree to set $F_{\infty}^{\varphi,\mathcal{A}} := \emptyset$. When we talk about the stages and fixed-point of $\varphi$ in general, regardless of the interpretations of the extra free variables and regardless of the structure in which they are interpreted, or when the interpretations and the structures are understood from context, we just say the "stages" and the "the fixed-point", and write  $F_n^{\varphi}$ and $F_{\infty}^{\varphi}$. $\newline$ (Note that the notation $F^{\varphi,\mathcal{A}}$ does not make explicit all relevant data.) $\newline$
$\bullet$ Clearly, for any structure $\mathcal{A}$, the sequence for the formula $R\overline{x}\vee\varphi$ is increasing (in fact more, the function $F^{(R\overline{x}\vee\varphi),\mathcal{A}}$ is inflationary, i.e., has the property that $S \subseteq F^{(R\overline{x}\vee\varphi),\mathcal{A}} (S)$ for any $S \subseteq A^k$), and since the structure $\mathcal{A}$ is finite, the sequence must come to the fixed-point in at most $||\mathcal{A}||^k$ stages. This fixed-point (of $R\overline{x}\vee\varphi$ in $\mathcal{A}$ "under the given interpretations of the extra free variables") is called the inflationary fixed-point of $\varphi$ in $\mathcal{A}$ "under the given interpretations of the extra free variables". (Sometimes we call $R\overline{x}\vee\varphi$ "the inflationary formula obtained from $\varphi$")
$\newline$
$\bullet$ $\varphi$ is said to be $R$-positive if, when expressed using only the connectives $\neg,$ $\vee$, all occurrences of $R$ in $\varphi$ are positive, i.e., are in the scope of an even number of negations. It can be easily proved (by induction on formulas) that if $\varphi$ is $R$-positive then it is monotone in the sense that :
For every structure $\mathcal{A}$, and every $S_1,S_2 \in A^k,$ $S_1\subseteq S_2 \Rightarrow F^{\varphi,\mathcal{A}}(S_1)\subseteq F^{\varphi,\mathcal{A}}(S_2)$. $\newline$
Clearly, if $\varphi$ is monotone then for every structure $\mathcal{A}$,
 $$\emptyset \subseteq F^{\varphi,\mathcal{A}}(\emptyset) \subseteq (F^{\varphi,\mathcal{A}})^2(\emptyset) \subseteq (F^{\varphi,\mathcal{A}})^3(\emptyset) \subseteq \ldots $$
 and hence its fixed-point exists (the sequence comes to it at a stage $\leq ||\mathcal{A}||^k$). This fixed-point is also the \emph{least} fixed-point : Let $S\subseteq A^k$ be such that $F^{\varphi,\mathcal{A}}(S)=S$. $\emptyset \subseteq S$ i.e. $F^{\varphi,\mathcal{A}}_0 \subseteq S$. Assuming that $F^{\varphi,\mathcal{A}}_i \subseteq S$, where $i\geq 0$, we have by the monotonicity of $\varphi$ that $F^{\varphi,\mathcal{A}}(F^{\varphi,\mathcal{A}}_i) \subseteq F^{\varphi,\mathcal{A}}(S)$ i.e. $F^{\varphi,\mathcal{A}}_{i+1} \subseteq S$. Hence, by induction, $F^{\varphi,\mathcal{A}}_i \subseteq S$ for every $i\geq 0$. Thus $F^{\varphi,\mathcal{A}}_{\infty}\subseteq S$. This shows that $F^{\varphi,\mathcal{A}}_{\infty}$ is contained in every other fixed-point of $F^{\varphi,\mathcal{A}}$, hence it is the least fixed-point of $F^{\varphi,\mathcal{A}}$.

 \begin{definition} (Fixed-Point Extensions of First-Order Logic)
 \begin{enumerate}
   \item[(1)]  Partial Fixed-Point Logic $\textrm{FO}(\textrm{PFP})$ :$\newline$
The class of formulas of $\textrm{FO}(\textrm{PFP})$ of vocabulary $\tau$ is given by the calculus $\newline \newline$
$\bullet$ $\frac{}{\varphi}$ where $\varphi$ is an atomic second-order formula over $\tau$. $\newline\newline$
$\bullet$ $\frac{\varphi}{\neg \varphi}$, $\frac{\varphi,\psi}{(\varphi\vee\psi)}$, $\frac{\varphi}{\exists x \varphi}$ $\newline\newline$
$\bullet$ $\frac{\varphi}{[\textrm{PFP}_{R,\overline{x}}\varphi]\overline{t}}$ where the lengths of $\overline{x}$ and $\overline{t}$ are the same and coincide with the arity of $R$.$\newline$
(The expression $\frac{\varphi_1,\ldots,\varphi_l}{\varphi}$ means that if $\varphi_1,\ldots,\varphi_l$ are formulas then $\varphi$ is a formula).
The semantics is defined inductively with respect to this calculus, the meaning of $[\textrm{PFP}_{R,\overline{x}}\varphi]\overline{t}$ being $\overline{t}\in F^{\varphi}_{\infty}$. In particular, $[\textrm{PFP}_{R,\overline{x}}\varphi]\overline{t}$ is false if the fixed-point of $\varphi$ does not exist. $\newline$
Sentences are formulas without free first-order and second-order variables where the free occurrence of variables are defined in the standard way, adding the clause
$$free([\textrm{PFP}_{R,\overline{x}}\varphi]\overline{t}) := free(\overline{t}) \cup (free(\varphi)\backslash \{\overline{x},R\})$$

   \item[(2)]  Inflationary Fixed-Point Logic $\textrm{FO}(\textrm{IFP})$ :$\newline \newline$
The class of formulas of $\textrm{FO}(\textrm{IFP})$ of vocabulary $\tau$ is given by the calculus with the same rules as the calculus for $\textrm{FO}(\textrm{PFP})$ except for the last rule which is replaced by $\newline$
"$\frac{\varphi}{[\textrm{IFP}_{R,\overline{x}}\varphi]\overline{t}}$ where the lengths of $\overline{x}$ and $\overline{t}$ are the same and coincide with the arity of $R$." $\newline$
The semantics for $[\textrm{IFP}_{R,\overline{x}}\varphi]\overline{t}$ being $\overline{t}\in F^{(R\overline{x}\vee\varphi)}_{\infty}$.

   \item[(3)] Least Fixed-Point Logic $\textrm{FO}(\textrm{LFP})$ :$\newline$
The class of formulas of $\textrm{FO}(\textrm{LFP})$ of vocabulary $\tau$ is given by the calculus with the same rules as the calculus for $\textrm{FO}(\textrm{PFP})$ but the last rule is restricted to $R$-positive formulas $\varphi$. In this case, as we have shown above, $\varphi$ is monotone and hence its fixed-point exists and it is also the least fixed-point. $\newline$Thus, in this case, the formula $[\textrm{PFP}_{R,\overline{x}}\varphi]\overline{t}$ expresses that $\overline{t}$ is in the least fixed-point of $\varphi$. We therefore write $[\textrm{LFP}_{R,\overline{x}}\varphi]\overline{t}$ instead of $[\textrm{PFP}_{R,\overline{x}}\varphi]\overline{t}$.
 \end{enumerate}
 \end{definition}
 We sometimes write $[\textrm{PFP}_{R,\overline{x}}\varphi]$, $[\textrm{IFP}_{R,\overline{x}}\varphi]$, or $[\textrm{LFP}_{R,\overline{x}}\varphi]$ to denote the fixed-points.

\begin{remark}$\newline$
 Clearly, $\textrm{FO}(\textrm{LFP}) \leq \textrm{FO}(\textrm{IFP}) \leq \textrm{FO}(\textrm{PFP})$, (i.e. every formula in $\textrm{FO}(\textrm{LFP})$ is equivalent to a formula in $\textrm{FO}(\textrm{IFP})$, and every formula in $\textrm{FO}(\textrm{IFP})$ is equivalent to a formula in $\textrm{FO}(\textrm{PFP})$).
Actually, on every class of finite structures, $$\textrm{FO}(\textrm{LFP}) = \textrm{FO}(\textrm{IFP}) \;\;\;\;\;\;\; \text{(Gurevich and Shelah 1986 \cite{Gurevich})}$$
\end{remark}
\begin{remark}$\newline$
It can be easily proved that the extra free first-order variables in fixed-point formulas, for example as $\overline{u}$ in page 20, can always be avoided at the expense of relations of higher arity. Thus, we can normally assume that no extra free first-order variables are present in the fixed-point formula.
\end{remark}
\begin{remark}$\newline$
These logics are strong enough to express transitive closure. Given a global binary relation $R$ over the set of $k$-tuples (of any structure), its transitive closure in any structure is expressible by $$[\textrm{LFP}_{X,\overline{x},\overline{y}}\varphi(X,\overline{x},\overline{y})]\overline{x}\overline{y}$$

where $\varphi(X,\overline{x},\overline{y}) := R\overline{x}\overline{y} \vee \exists z_1 \ldots \exists z_k (X\overline{x}\overline{z}\wedge R\overline{z}\overline{y})$.
\end{remark}

Let us now present simultaneous fixed-points: $\newline$
Given two formulas $\varphi(X,Y,x_1,\ldots,x_k)$ and $\psi(X,Y,y_1,\ldots,y_l)$, where $X$ is a relation variable of arity $k$ and $Y$ is a relation variable of arity $l$, and a structure $\mathcal{A}$, the functions
  $$F^1 : A^k\times A^l \rightarrow A^k$$
  $$F^1(S_1,S_2) := \{\overline{a} \in A^k \; | \; \mathcal{A} \vDash \varphi[S_1,S_2,a_1,\ldots,a_k]\}$$
  and
   $$F^2 : A^k\times A^l \rightarrow A^l$$
  $$F^2(S_1,S_2) := \{\overline{b} \in A^l \; | \; \mathcal{A} \vDash \psi[S_1,S_2,b_1,\ldots,b_l]\}$$
  give rise to two sequences :
  $$F^i_{(0)} := \emptyset,\; F^i_{(n+1)} := F^i(F^1_{(n)},F^2_{(n)}) \;\;\; \text{for} \;i=1,2.$$

If we have for some $n$ that $(F^1_{(n)},F^2_{(n)}) = (F^1_{(n+1)},F^2_{(n+1)})$, we set $(F^1_{(\infty)},F^2_{(\infty)}) := (F^1_{(n)},F^2_{(n)})$ and say that the \emph{simultaneous fixed-point} $(F^1_{(\infty)},F^2_{(\infty)})$ of $(F^1,F^2)$ \emph{exists} (we may also call it the simultaneous fixed-point of $\varphi$ and $\psi$ in $\mathcal{A}$).$\newline$
Note that for $i=1,2$,
$$F^i(F^1_{(\infty)},F^2_{(\infty)}) = F^i_{(\infty)}.$$

\section{Complexity}
In this section we introduce the definitions and facts we need from complexity theorey. This section is mainly form Immerman's "Descriptive Complexity" \cite{Immerman}. We assume that the reader is familiar with the Turing machine.
 \begin{definition}
 A \emph{query} is any mapping $I: \textrm{STRUC}[\sigma]\rightarrow \textrm{STRUC}[\tau]$ from the finite structures of one vocabulary to the finite structures of another vocabulary, that is polynomially bounded. That is, there is a polynomial $p$ such that for all $\mathcal{A}\in \textrm{STRUC}[\sigma]$, $\|I(\mathcal{A})\| \leq p(\|\mathcal{A}\|)$. A \emph{boolean query} is a map $I_b : \textrm{STRUC}[\sigma] \rightarrow \{0,1\}$, from the finite structures of a vocabulary to $\{0,1\}$. A boolean query may also be thought of as a subset of $\textrm{STRUC}[\sigma]$ - the set of finite structures $\mathcal{A}$ for which $I(\mathcal{A})=1$.
 \end{definition}

 Any sentence $\varphi$, from any logic over any vocabulary $\sigma$, defines a boolean query $I_{\varphi}$ on $\textrm{STRUC}[\sigma]$ where $I_{\varphi} (\mathcal{A}) = 1$ iff $\mathcal{A}\vDash \varphi$.

  \begin{definition} \label{1.4.2}(First-Order Queries) $\newline$
 Let $\sigma$ and $\tau$ be any two vocabularies where $\tau =\{R_1,\ldots,R_r, c_1, \ldots,c_s \}$ and each $R_i$ has arity $a_i$, and let $k$ be a fixed natural number. A \emph{k-ary first-order query} is a map
                 $$I: \textrm{STRUC}[\sigma]\rightarrow \textrm{STRUC}[\tau]$$
defined by an $r+s+1$-tuple of first-order formulas, $\varphi_0,
\varphi_1,\ldots,\varphi_r, \psi_1,\ldots,\psi_s$, from
$\textrm{FO}[\sigma]$. For each structure $\mathcal{A}\in
\textrm{STRUC}[\sigma]$, these formulas describe a structure
$I(\mathcal{A})\in \textrm{STRUC}[\tau]$,
               $$I(\mathcal{A})= (|I(\mathcal{A})|,R_1^{I(\mathcal{A})},\ldots,R_r^{I(\mathcal{A})}, c_1^{I(\mathcal{A})}, \ldots,c_s^{I(\mathcal{A})})$$
The universe of $I(\mathcal{A})$ is a first-order definable subset
of $A^k$,
$$|I(\mathcal{A})|=\{ (b^1,\ldots,b^k) \in A^k \; | \; \mathcal{A}\vDash \varphi_0[b^1,\ldots,b^k] \}$$

Each relation $R_i^{I(\mathcal{A})}$ is a first-order definable
subset of $|I(\mathcal{A})|^{a_i}$,
$$R_i^{I(\mathcal{A})} = \{ ( ( b_1^1,\ldots, b_1^k ), \ldots , ( b_{a_i}^1,\ldots,b_{a_i}^k)) \in |I(\mathcal{A})|^{a_i} \; | \; \mathcal{A}\vDash \varphi_i[b_1^1,\ldots,b_{a_i}^k] \}$$
Each constant symbol $c_j^{I(\mathcal{A})}$ is a first-order
definable element of $|I(\mathcal{A})|$,$\newline$
$c_j^{I(\mathcal{A})} =$ the unique $ (b^1, \ldots , b^k)
\in |I(\mathcal{A})|$ such that $\mathcal{A}\vDash
\psi_j[b^1,\ldots,b^k].$
(Every $\psi_j$ should be such that there is exactly one tuple $(b^1,\ldots,b^k)$ satisfying $\varphi_0 \wedge \psi_j$, otherwise $I$ does not define a first-order query)
 \end{definition}
 A first-order query is either boolean, and thus defined by a first-order sentence, or is a $k$-ary first-order query, for some $k$.
$\newline\newline$
We can now say precisely  what we meant by saying that every structure may be thought of as a graph : \cite{Immerman} $\newline$
Let $\tau_g$ denote the vocabulary of graphs. For any vocabulary $\sigma$ there exist first-order queries $I : \textrm{STRUC}[\sigma] \rightarrow \textrm{STRUC}[\tau_g]$ and $I^{-1} : \textrm{STRUC}[\tau_g] \rightarrow \textrm{STRUC}[\sigma]$ with the following property,
$$\text{for all} \; \mathcal{A}\in \textrm{STRUC}[\sigma],\;\;\;\;\; I^{-1}(I(\mathcal{A})) \cong \mathcal{A} $$

\begin{definition}
Let $I: \textrm{STRUC}[\sigma]\rightarrow \textrm{STRUC}[\tau]$ be a query. Let $M$ be a Turing machine. Suppose that for all $\mathcal{A}\in \textrm{STRUC}[\sigma]$, $M(bin(\mathcal{A})) = bin(I(\mathcal{A}))$. Then we say that $M$ \emph{computes} $I$.
\end{definition}

We assume that the reader is familiar with the following classical complexity classes : $\textrm{L}$ deterministic logspace, $\textrm{P}$ deterministic polynomial time, and $\textrm{PSPACE}$ deterministic polynomial space.

\begin{definition}($\mathcal{Q}(\mathcal{C})$, the queries computable in $\mathcal{C}$) $\newline$
Let $I: \textrm{STRUC}[\sigma] \rightarrow \textrm{STRUC}[\tau]$ be
a query, and $\mathcal{C}$ a complexity class. We say that $I$ is
\emph{computable in} $\mathcal{C}$ (or in $\mathcal{C}$ for short) iff the boolean query $I_b$ is an
element of $\mathcal{C}$, where $ \newline I_b = \{(\mathcal{A}, i, a)\; |$
The $i$-th bit of $bin(I(\mathcal{A}))$ is $``a" \}.$ Let
$\mathcal{Q}(\mathcal{C})$ be the set of all queries computable in
$\mathcal{C}$:
$\mathcal{Q}(\mathcal{C}) = \mathcal{C} \cup \{I\; | \; I_b \in \mathcal{C}\}.$
\end{definition}

\begin{definition}(Many-One Reduction)$\newline$
Let $\mathcal{C}$ be a complexity class, and let $K\subseteq
\textrm{STRUC}[\sigma]$ and $H \subseteq \textrm{STRUC}[\tau]$ be
boolean queries. Suppose that the query $I:\textrm{STRUC}[\sigma]
\rightarrow \textrm{STRUC}[\tau]$ is an element of
$\mathcal{Q}(\mathcal{C})$ with the property that for all
$\mathcal{A}\in \textrm{STRUC}[\sigma]$,
$$\mathcal{A}\in K \Leftrightarrow I(\mathcal{A})\in H$$

Then $I$ is a $\mathcal{C}$-\emph{many-one reduction} from $K$ to $H$. We
say that $K$ is $\mathcal{C}$-\emph{many-one reducible} to $H$, in symbols,
$K \leq_{\mathcal{C}} H$. For example, when $I$ is a first-order
query, the reduction is called a first-order reduction ($\leq_{\text{fo}}$); when $I \in \mathcal{Q}(\textrm{L})$, the reduction is called a logspace reduction ($\leq_{\log}$); and when $I \in \mathcal{Q} (\textrm{P})$, the reduction is called a polynomial-time reduction ($\leq_{\text{p}}$).
\end{definition}

\begin{definition}
Let $K$ be a boolean query, let $\mathcal{C}$ be a complexity class, and let $\leq_r$ be a reducibility relation.
We say that $K$ is $\mathcal{C}$-\emph{complete} under $\leq_r$ iff :
\begin{enumerate}
\item[(1)] $K\in \mathcal{C}$, and,
\item[(2)] for all $H\in \mathcal{C}$, $H \leq_{r} K$.
\end{enumerate}
\end{definition}

\begin{definition} (Closure under First-Order Reductions) $\newline$
A set of boolean queries $\mathcal{S}$ is \emph{closed under first order reductions} iff whenever there are boolean queries $K$ and $H$ such that $H \in \mathcal{S}$ and $K \leq_{\text{fo}} H$, we have that $K \in \mathcal{S}$. We say that a logic $\mathcal{L}$ is closed under first-order reductions iff the set of boolean queries definable in $\mathcal{L}$ is so closed.

\end{definition}

\begin{definition} (Alternating Turing Machine)
An \emph{alternating Turing machine} is a Turing machine whose states are divided into two groups : the existential states and the universal states. The notion of when such a machine accepts an input is defined by induction : The alternating machine in a given configuration $C$ \emph{accepts} iff
\begin{enumerate}
\item[1.] $C$ is in a final accepting state, or
\item[2.] $C$ is in an existential state and there exists a next configuration $C^{\prime}$ that accepts, or
\item[3.] $C$ is in a universal state, there is at least one next configuration,and all next configurations accept.
\end{enumerate}
\end{definition}

Note that this is a generalization of the notion of acceptance for a nondeterministic Turing machine, which is an alternating Turing machine all of whose states are existential.

\begin{definition} (Boolean Circuits) $\newline$
A \textit{boolean circuit} is a directed acyclic graph
$$C=(V,E,G_{\wedge},G_{\vee},G_{\neg},I,r)\in \textrm{STRUC}[\tau_c]; \;\; \tau_c = \{ E,G_{\wedge},G_{\vee},G_{\neg},I,r \}$$
where $\newline$
The vertices $v$ with no edges entering them, are called leaves, and the input relation $I(v)$ represents the fact that the leaf $v$ is on (i.e. recieved $1$ as input). The vertices that are not leaves are called internal vertices. An internal vertex $w$ is said to be an "and"-gate if $G_{\wedge}(w)$ holds, an "or"-gate if $G_{\vee}(w)$ holds, and a "not"-gate if $G_{\neg}(w)$ holds. Any internal vertex should be in exactly one of $G_{\wedge},G_{\vee},$ or $G_{\neg}$. A "not"-gate should have indegree $1$. $r$ is a vertex with no outgoing edges and is called the root.
\end{definition}

\begin{definition}(Computation by Boolean Circuits)$\newline$
Let $S\subseteq \textrm{STRUC}[\tau_s]$ be a boolean query on binary strings (where $\tau_s$ is the vocabulary of binary strings). In circuit complexity, $S$ would be computed by an infinite sequence of circuits
$$\mathcal{C}=\{C_i\; | \; i=1,2,\ldots\},$$
where $C_n$ is a circuit with $n$ input bits (i.e., vertices with no ingoing edges, or leaves) and a single output bit r (i.e. there is only one vertex with no outgoing edges which is the root). For $w\in \{0,1\}^n$, let $C_n(w)$ be the value at $C_n$'s output gate, when the bits of $w$ are placed in its $n$ input gates (this defines the values at the input gates), and where the values at the other gates are defined inductively as follows (until we come to the output gate at the end) : If the gate is an "and"-gate then its value is $1$ iff the values of all gates from which there are edges entering it are $1$, if the gate is an "or"-gate then its value is $1$ iff at least one of the gates from which there are edges entering it has value $1$, and if the gate is a "not"-gate then its value is $1$ iff the value of the unique gate from which there is an edge entering it is $0$.
$\newline$ We say that $\mathcal{C}$ \textit{computes} $S$ iff for all $n$ and for all $w\in\{0,1\}^n$,
                            $$w\in S \;\;\;\;\; \Leftrightarrow \;\;\;\;\; C_n(w)=1.$$
\end{definition}

\begin{definition}
The \textit{size} of a boolean circuit is the number of vertices in it, and its \textit{depth} is the length of a longest path from root to leaf.
\end{definition}

\begin{definition} (Uniformity) $\newline$
Let $\mathcal{C}= \{C_i\; | \; i=1,2,\ldots\}$ be a sequence of circuits, and let $I : \textrm{STRUC}[\tau_s] \rightarrow\textrm{STRUC}[\tau_c]$ be a query such that for all $n\geq 1$, $I(0^n) = C_n$. That is, on input a string of $n$ zeros the query produces circuit $n$. If $I$ is a first-order query, then $\mathcal{C}$ is said to be a \textit{first-order uniform} sequence of circuits. Similarly, if $I\in \textrm{L}$, then $\mathcal{C}$ is \textit{logspace} \textit{uniform}. If $I\in \textrm{P}$, then $\mathcal{C}$ is \textit{polynomial-time uniform}, and so on.
\end{definition}
From now on, when we mention uniformity without specifying the kind, we mean first-order uniformity.

\newpage
\thispagestyle{empty}
\mbox{}
.
\chapter{Depth}

This chapter is devoted to the notion of depth of inductive definitions over finite structures. Roughly speaking, depth, is the number of iterations (of the function $F^{\varphi}$ on $\emptyset$) needed to come to the fixed-point (of $\varphi$).

\section{The Notion of Depth}

\begin{definition}$\newline$
Let $\varphi(R,x_1,\ldots,x_k)$ be a formula, where $R$ is a relation variable of arity $k$, let $\mathcal{A}$ be a structure of size $n$.
\begin{enumerate}
\item[(1)] Define the \emph{depth} of $\varphi$ \emph{in} $\mathcal{A}$, in symbols $|\varphi^{\mathcal{A}}|$, to be the minimum $r$ such that
$$F^{\varphi,\mathcal{A}}_{r+1} = F^{\varphi,\mathcal{A}}_r$$
in case the fixed-point of $F^{\varphi,\mathcal{A}}$ exists. If the fixed-point does not exist we agree to set $|\varphi^{\mathcal{A}}|:=\infty$. $\newline$
It can be easily shown that $|\varphi^{\mathcal{A}}| < 2^{n^k}$, in case the fixed-point of $F^{\varphi,\mathcal{A}}$ exists.
\item[(2)] Define the depth of $\varphi$ as a function of $n$ equal to the maximum depth of $\varphi$ in $\mathcal{A}$ (over all structures $\mathcal{A}$ of size $n$) :
             $$|\varphi|(n) := \underset{||\mathcal{A}||=n}{max} \{|\varphi^{\mathcal{A}}|\}$$
\item[(3)] Define the \emph{inflationary-depth} of $\varphi$ \emph{in} $\mathcal{A}$, in symbols $|\varphi^{\mathcal{A}}|_{\textrm{IFP}}$, as the depth of $(R\overline{x}\vee \varphi)$ in $\mathcal{A}$, and the \emph{inflationary-depth} of $\varphi$ (as a function of $n$), in symbols $|\varphi|_{\textrm{IFP}}$, as the depth of $(R\overline{x}\vee \varphi)$ (as a function of $n$).
\end{enumerate}
\end{definition}

We may also think intuitively of the depth for the simultaneous fixed-point and for the nested fixed-point as the total number of iterations needed to come to the fixed-point.

Least fixed-point logic can be fragmentized into smaller logics by imposing restrictions on depth. Before we mention the definition from Immerman's "Descriptive Complexity", we must mention following proposition :
\begin{proposition} \label{2.1.2} \cite{Flum},\cite{Immerman} $\newline$
On ordered structures, every $\textrm{FO}(\textrm{IFP})$-sentence is equivalent to an $\textrm{FO}(\textrm{IFP})$-sentence in which $\textrm{IFP}$ occurs at most once. The same applies to $\textrm{FO}(\textrm{PFP})$ and $\textrm{PFP}$, and also to $\textrm{FO}(\textrm{LFP})$ and $\textrm{LFP}$.
\end{proposition}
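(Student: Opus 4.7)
I would prove the proposition by induction on the structure of $\textrm{FO}(\textrm{IFP})$-sentences, aiming for the stronger normal form $\chi([\textrm{IFP}_{R,\overline{x}}\varphi]\overline{t})$ with $\chi,\varphi\in\textrm{FO}$ and exactly one occurrence of $\textrm{IFP}$. Atomic sentences are trivial, and the Boolean/quantifier steps applied to an already-normalized sub-sentence are immediate: the outer first-order layer is simply absorbed into $\chi$. The only nontrivial inductive case is that of $[\textrm{IFP}_{R,\overline{x}}\varphi']\overline{t}$ where the normalized $\varphi'$ itself still contains an $\textrm{IFP}$, so one has a nested $\textrm{IFP}$ to collapse. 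This collapse is where the ordering assumption is really used, and it splits into two ingredients.

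The first ingredient is a \emph{simultaneous-to-single} reduction: a simultaneous $\textrm{IFP}$ of $k$ formulas $\varphi_1,\ldots,\varphi_k$ with relation symbols of arities $a_1,\ldots,a_k$ is equivalent to a single $\textrm{IFP}$ over a relation $S$ of arity $(\max_i a_i)+\lceil\log k\rceil$, using the order to name $k$ pairwise distinct ``tag'' tuples $\overline{\tau}_1,\ldots,\overline{\tau}_k$ (for instance, the lexicographically first $k$ tuples in the order) and padding shorter tuples with $min$. A single first-order $\Phi(S,\overline{y})$ reads the tag of $\overline{y}$ and applies the corresponding $\varphi_i$ to the de-tagged slots; a routine stage-by-stage induction shows that $[\textrm{IFP}_{S,\overline{y}}\Phi]$ codes the simultaneous fixed-point. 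The second ingredient is the reduction of a nested $\textrm{IFP}$ to a simultaneous one. For $[\textrm{IFP}_{R,\overline{x}}\varphi(R,[\textrm{IFP}_{S,\overline{y}}\psi]\overline{t},\overline{x})]$ I would introduce a ``phase bit'' encoded by two distinguished order-elements and a joint relation over $R$ and $S$: phase $0$ accumulates stages of $\psi$ relative to the current approximation of $R$; once those stages stabilize (detected by a first-order stage-comparison that scans tuples of $\overline{y}$ in $<$-order and checks old-versus-new membership) the phase switches to $1$, one step of $\varphi$ is applied to $R$ using the completed inner fixed-point, the $S$-coordinate is cleared, and the phase returns to $0$ for the next outer stage. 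This yields a simultaneous $\textrm{IFP}$, collapsed to a single one by the first ingredient.

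The main obstacle is exactly that stage-comparison step in the nested case: expressing ``the inner stage equals the previous inner stage'' \emph{inside} the outer inductive formula is the place where $<$ is indispensable, because it lets one enumerate tuples of $\overline{y}$ in order and record the change-or-not verdict in an auxiliary coordinate, all in a first-order way; without an order, this ``reached inner closure'' predicate is not first-order and the construction genuinely breaks. Once the nested case has been collapsed, the outer induction on formula structure closes the proof. The parallel statements for $\textrm{FO}(\textrm{PFP})$ and the fragment $\textrm{PFP}$ are obtained by the same construction with inflationary stages replaced by partial-fixed-point stages (and the ``no-change'' test in the stage comparison adjusted accordingly), while the statements for $\textrm{FO}(\textrm{LFP})$ and $\textrm{LFP}$ follow from the $\textrm{IFP}$ case combined with the Gurevich--Shelah equivalence $\textrm{FO}(\textrm{LFP})=\textrm{FO}(\textrm{IFP})$ on finite structures recalled in the remark just above.
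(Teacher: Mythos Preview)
The paper does not prove this proposition; it is quoted from \cite{Flum} and \cite{Immerman} and used as a black box (immediately after it, the thesis remarks that one may therefore picture any fixed-point formula as having a single application of the operator). So there is no ``paper's own proof'' to compare against, and your outline is precisely the standard textbook argument from those references: reduce nested fixed-points to simultaneous ones, then collapse a simultaneous fixed-point into a single one by tagging the components with the first few elements of the order.

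One technical point in your sketch does need repair for the $\textrm{IFP}$ case. You write that after the inner induction stabilizes, ``the $S$-coordinate is cleared'' and the phase returns to $0$. In an inflationary induction nothing can be cleared: every tuple ever added to the joint relation stays there. As written, on the second outer round your phase-$0$ slot already contains the old inner fixed-point, so the inner iteration would see garbage from previous rounds. The standard fix --- and this is exactly what the references do --- is to adjoin an explicit outer-stage counter $\overline{c}$ to the joint relation (a tuple of length $k$ if $R$ has arity $k$, incremented lexicographically using $<$); the inner iteration at outer stage $\overline{c}$ writes into the $\overline{c}$-th slice and reads the $R$-approximation from the $(\overline{c}-1)$-st slice. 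Nothing is ever erased, and the bound $|\varphi|(n)\le n^k$ guarantees the counter is long enough. Your stage-comparison test (``has the inner slice stopped changing?'') is then first-order because one can quantify over all $\overline{y}$ and compare membership in the current slice with membership produced by one more application of $\psi$. With this correction your argument goes through; for $\textrm{PFP}$ the clearing is harmless as you say, and your appeal to Gurevich--Shelah for $\textrm{LFP}$ is legitimate, though the references also give a direct positive-only construction that avoids that detour.
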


Hence when we talk about a fixed-point formula we may allow ourselves to imagine it as one in which the fixed-point operator occurs only once in front of a first-order formula, (or more precisely, in front of a second-order formula without any occurrences of a fixed-point operator or a second-order quantifier, but we sometimes call such formulas first-order for easiness).

\begin{definition}
Let $\textrm{IND}[f(n)]$  be the sub-logic of $\textrm{FO}(\textrm{LFP})$ in which only fixed-points of first-order formulas $\varphi$ for which $|\varphi|$ is $O(f(n))$ are included.
\end{definition}

From a previous remark, the fixed-point of a positive formula in a finite structure $\mathcal{A}$ comes at a stage $\leq||\mathcal{A}||^k$, where $k$ is the arity of the relation variable through which the fixed-point is computed, i.e., the depth of positive formulas is always polynomially bounded. Hence we may assume that $f(n)$ is polynomially bounded. $\newline$
It is clear now that, $\textrm{FO}(\textrm{LFP})=\underset{k\geq 1}{\bigcup}\textrm{IND}[n^k]$

\begin{example} (A Well-Known Example)
In graphs, the transitive closure of the edge relation is the least fixed-point of the formula
$$\varphi(R,x,y):= Exy\vee \exists z(E(x,z)\wedge R(z,y))$$ i.e. for any $u\neq v$ there is a path from $u$ to $v$ iff $$[\textrm{LFP}_{R,x,y}\varphi(R,x,y)]uv \;\;\;\;\; \text{holds}.$$
In any graph $\mathcal{G}$, for any $k\geq1$,$\newline$
 $(F^{\varphi,\mathcal{G}})^k(\emptyset)= \{(x,y)\; | \; x\neq y$ and there is a path from $x$ to $y$ of length $\leq k \}$, and since the distance (the shortest length of a path) from a vertex to another vertex connected to it in $\mathcal{G}$ is at most $n-1$ if $\|\mathcal{G}\| = n$, the fixed-point is obtained at most at $k=n-1$  i.e. after $n-1$ iterations of the function $F^{\varphi,\mathcal{G}}$ on $\emptyset$. Since for every $n$ there is actually a graph on $n$ vertices in which there is a path of length $n-1$, then $\varphi(R,x,y)$ is of depth $n-1$, and therefore connectivity is expressible in $\textrm{IND}[n]$. Actually it is even expressible in $\textrm{IND}[\log n]$ as the path relation can be expressed as the fixed-point of the formula
                     $$\psi(R,x,y):= E(x,y) \vee \exists z (R(x,z) \wedge R(z,y))$$
                     which is of depth $\left\lceil \log n \right\rceil +1 $.
\end{example}

We now present another view of depth, also from Immerman's "Descriptive Complexity", depending on some form of writing positive formulas that was originally introduced by Moschovakis \cite{Moschovakis} (Moschovakis and Immerman both studied inductive definitions but the work of Moschovakis is different from the work of Immerman in that Moschovakis focused mainly on infinite structures while Immerman studies finite ones). The following theorem shows this form :

\begin{theorem}
Let $\varphi(R,x_1,\ldots,x_k)$ be an $R$-positive first-order formula, where $R$ is a relation variable of arity $k$. Then there are quantifiers $Q_1,\ldots,Q_s$, and quantifier-free first-order formulas $M_1,\ldots,M_s,M_{s+1}$ in which $R$ does not occur such that $$\varphi (R,x_1 \ldots x_k) \equiv \newline\newline (Q_1z_1, M_1)\ldots(Q_sz_s,M_s)(\exists x_1 \ldots \exists x_k,M_{s+1}) R x_1,\ldots,x_k$$
 where for formulas $\psi$ and $M$, $(\exists x,M) \psi$ means $\exists x (M \wedge \psi)$, and $(\forall x, M) \psi$ means $\forall x (M\rightarrow \psi)$.
\end{theorem}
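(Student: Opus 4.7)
The plan is to induct on the structure of $\varphi$ after first putting it into negation normal form. Since $\varphi$ is $R$-positive, applying De~Morgan's laws to push all negations inward yields an equivalent formula built from two kinds of literals --- positive $R$-atoms $R t_1 \ldots t_k$, and $R$-free literals --- combined by $\wedge$, $\vee$, $\exists$, and $\forall$. Throughout the induction I would freely rename bound variables to avoid capture.

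For the base case, a single atom $R t_1 \ldots t_k$ is rewritten using the equality trick as
$$R t_1 \ldots t_k \;\equiv\; \exists y_1 \ldots \exists y_k \left( \bigwedge_{i=1}^k (y_i = t_i) \wedge R y_1 \ldots y_k \right),$$
which is already in the required form with $s=0$ and $M_{s+1} = \bigwedge_i (y_i = t_i)$. The quantifier cases are immediate: for $\exists x\,\varphi$ and $\forall x\,\varphi$, apply the inductive hypothesis to $\varphi$ and prepend the guarded quantifier $(\exists x, \textrm{T})$ or $(\forall x, \textrm{T})$ to the existing prefix. For a disjunction or a conjunction, apply the inductive hypothesis to both subformulas, rename the variables of the two quantifier prefixes to be mutually disjoint and disjoint from the free variables, then pull both prefixes outward using the standard prenex equivalences $(Qz\,\psi) \circ \chi \equiv Qz(\psi \circ \chi)$, which are valid for any $Q \in \{\exists,\forall\}$ and any $\circ \in \{\wedge,\vee\}$ provided $z$ is not free in $\chi$.

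The main obstacle is merging the two terminal blocks once the outer quantifiers have been pulled out. For disjunction, the combination $(\exists \bar{y}, N_1)R\bar{y} \vee (\exists \bar{y}, N_2)R\bar{y}$ collapses cleanly, after a common renaming of the bound tuple, to $(\exists \bar{y}, N_1 \vee N_2) R\bar{y}$. For conjunction, one must reduce $(\exists \bar{y}_1, N_1) R \bar{y}_1 \wedge (\exists \bar{y}_2, N_2) R \bar{y}_2$ to a single terminal $R$-atom. The idea I would try is to introduce an auxiliary universally quantified ``case variable'' $i$ ranging over two distinguished elements and select between $\bar{y}_1$ and $\bar{y}_2$ inside the guard of a single existential block according to the value of $i$, writing
$$(\forall i, M^*(i))\,(\exists \bar{y}, N^*(i,\bar{y}))\, R\bar{y},$$
with $N^*$ forcing $\bar{y} = \bar{y}_1$ in one case and $\bar{y} = \bar{y}_2$ in the other. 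The technical care is in ensuring that the two distinguished elements are always distinct (otherwise the encoding degenerates into a disjunction), which forces one to exploit the free variables or structure constants (such as $min$ and $max$ on ordered structures) and to handle the degenerate one-element case separately. This conjunction step is where the formal verification requires the most delicacy, and where $R$-positivity is used to guarantee that the combined matrix really does produce an equivalent formula in the prescribed normal form.
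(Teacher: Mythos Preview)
Your induction skeleton is right, but there are two real gaps.

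First, you omit the second base case: an $R$-free literal (or more generally an $R$-free quantifier-free formula) must also be put into the target shape, and since the shape ends in $R\bar{x}$ this is not automatic. The paper handles any $R$-free quantifier-free $\chi$ by the trick
\[
\chi \;\equiv\; (\forall z,\ \neg\chi)\,(\exists \bar{x},\ x_1\neq x_1)\,R\bar{x},
\]
using a vacuously satisfied universal guard.

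Second, and more seriously, your prenexing step does not deliver what you say. Take the simplest nontrivial case, $\alpha = (\exists y, N)(\exists\bar{x},N')R\bar{x}$. Then
\[
\alpha\vee\beta \;=\; \exists y\bigl((N\wedge (\exists\bar{x},N')R\bar{x})\vee\beta\bigr),
\]
and the matrix is \emph{not} of the form $(\exists\bar{x},N_1)R\bar{x}\vee(\exists\bar{x},N_2)R\bar{x}$: the guard $N$ sits as a conjunct on one disjunct only. For universal guards the dual problem appears: $(\forall y,N)(\ldots)\vee\beta$ prenexes to $\forall y(\neg N\vee(\ldots)\vee\beta)$, leaving a free-floating $R$-free disjunct $\neg N$. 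In general the matrix after prenexing has the form $D\vee(C_\alpha\wedge A)\vee(C_\beta\wedge B)$ with $D$ an $R$-free residue built from the old guards, and for $\wedge$ you even get a cross term $A\wedge B$. Your ``collapse cleanly to $(\exists\bar y,N_1\vee N_2)R\bar y$'' step therefore does not go through as written.

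The paper avoids this entirely by applying your case-variable idea at the \emph{outermost} level and for \emph{both} connectives. Given prefixes $(Q_iy_i,N_i)_i$ for $\alpha$ and $(P_jz_j,M_j)_j$ for $\beta$ with disjoint variables, it weakens each guard to $N_i':=N_i\vee(v{=}0)$ and $M_j':=M_j\vee(v{=}1)$. When $v=1$ the first prefix acts exactly as before while the second collapses to a string of unrestricted quantifiers over fresh variables (hence vacuous), and symmetrically when $v=0$. One then simply \emph{concatenates} the two weakened prefixes, uses a single terminal block whose guard chooses $N_{t+1}$ or $M_{s+1}$ according to $v$, and prepends $(\forall v,\ v{=}0\vee v{=}1)$ for $\wedge$ or $(\exists v,\ v{=}0\vee v{=}1)$ for $\vee$. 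No prenexing, no debris, and the need for two distinguished elements $0,1$ is exactly where the paper's hypothesis enters.
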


\begin{proof}
This theorem is proved for structures with two distinguished elements $0$ and $1$.$\newline$
 The proof is by induction on the complexity of $\varphi$. We assume that all negations have been pushed all the way inside. $\newline$ There are two base cases :
\begin{enumerate}
\item[(1)] If $\varphi \equiv R v_1 \ldots v_k$, then,
                $$\varphi \equiv (\exists z_1 \ldots \exists z_k,M_1)(\exists x_1 \ldots \exists x_k,M_2)R x_1 \ldots x_k$$
                where
                $$M_1 \equiv z_1=v_1\wedge \ldots \wedge z_k=v_k,$$
                and
                $$M_2 \equiv x_1=z_1 \wedge \ldots \wedge x_k=z_k$$
\item[(2)]If $\varphi$ is quantifier-free and $R$ does not occur in $\varphi$, then, $$\varphi \equiv (\forall z, \neg \varphi)(\exists x_1 \ldots \exists x_k, x_1 \neq x_1)R x_1 \ldots x_k$$
\end{enumerate}
In the inductive cases $\varphi \equiv \exists v \psi$ and $\varphi \equiv \forall v \psi$, we simply put $(\exists v, v=v)$ and $(\forall v, v=v)$ in front of the quantifier block for $\psi$. Now there remains the cases for $\wedge$ and $\vee$.
$\newline$ Suppose that $\varphi \equiv \alpha \wedge \beta$ and

$$\alpha \equiv (Q_1y_1,N_1)\ldots (Q_ty_t,N_t)(\exists x_1 \ldots \exists x_k,N_{t+1}) R x_1 \ldots x_k$$ and
$$\beta \equiv (P_1z_1,M_1) \ldots (P_sz_s,M_s)(\exists x_1 \ldots \exists x_k,M_{s+1})R x_1 \ldots x_k$$

where the $Q$'s and $P$'s are quantifiers. We may assume that the $y$'s and $z$'s are disjoint and, are both disjoint from the free variables of $\varphi$.
Let
$$QB_1 := (Q_1y_1,N_1^{\prime}) \ldots (Q_ty_t,N_t^{\prime})$$ and
$$QB_2 := (P_1z_1,M_1^{\prime}) \ldots (P_sz_s,M_s^{\prime})$$
where $N_i^{\prime}:= N_i \vee (v=0)$; and $M_i^{\prime} := M_i \vee (v=1)$.$\newline$
Let $u_1, \ldots , u_k$ be a new set of variables, and for any formula $\psi(x_1, \ldots , x_k)$, let $\psi(\overline{u}/\overline{x})$ denote the formula $\psi$ with variables $u_1,\ldots,u_k$ substituted for the free occurrences of $x_1,\ldots,x_k$, and define the quantifier-free formulas,
                  $$\theta := (v=1 \wedge N_{t+1}(\overline {u}/\overline{x})) \vee (v=0\wedge M_{s+1}(\overline{u} / \overline{x}))$$ and $$\rho := (u_1 = x_1 \wedge \ldots\wedge u_k=x_k)$$
                  We show that $$\varphi \equiv (\forall v, (v=0 \vee v=1)) QB_1 QB_2(\exists \overline{u},\theta) (\exists \overline{x}, \rho) R x_1 \ldots x_k \;\;\;\; (*)$$

Put $\psi := QB_1 QB_2(\exists \overline{u},\theta) (\exists \overline{x}, \rho) R x_1 \ldots x_k$.The right hand side of $(*)$ holds iff in both cases, $v=0$ and $v=1$, the formula $\psi$ holds. When $v=1$, the formula $\theta$ is equivalent to $N_{t+1}(\overline{u}/\overline{x})$ whose free variables are from $\{y_1,\ldots, y_t,u_1,\ldots,u_k\}\cup free(\alpha)$. Also all the formulas $M_i^{\prime}$ are true when $v=1$, thus $QB_2$ is the same as $P_1z_1 \ldots  P_sz_s$. And since $z_1,\ldots,z_s$ are disjoint from $y_1,\ldots,y_t,u_1,\ldots,u_k$, and $free(\alpha)$, then $$QB_2 (\exists \overline{u},\theta) (\exists \overline{x},\rho)R x_1 \ldots x_k\; \text{is equivalent to}$$ $$(\exists\overline{u},N_{t+1}(\overline{u}/\overline{x})) (\exists \overline{x},\rho)R x_1 \ldots x_k,\; \text{which is equivalent to}$$ $$(\exists\overline{x}, N_{t+1}) R x_1 \ldots x_k.$$ $N_i^{\prime}$ is equivalent to $N_i$ for every $1\leq i \leq t$ when $v=1$, so $QB_1$ is the same as $(Q_1y_1,N_1) \ldots (Q_ty_t,N_t)$.Thus $\psi$ is equivalent to $$(Q_1y_1,N_1) \ldots (Q_ty_t,N_t)(\exists \overline{x}, N_{t+1}) R x_1 \ldots x_k$$ i.e. equivalent to $\alpha$. It can be similarly seen that, in case $v=0$, $\psi$ is equivalent to $\beta$. Thus the right hand side holds iff both $\alpha$ and $\beta$ hold i.e. iff $\varphi$ holds. It can now be similarly seen that if $\varphi \equiv \alpha \vee \beta $ then $\varphi \equiv (\exists v, (v=0 \vee v=1)) \psi$.
\end{proof}

Let us write $QB$ to denote the quantifier block $$(Q_1 z_1,M_1)\ldots(Q_s z_s,M_s)(\exists x_1 \ldots \exists x_k,M_{s+1})$$ Thus in particular, for any structure $\mathcal{A}$, any $r \in \mathbb{N}$, any $\overline{a} \in A^k$,

$$\overline{a} \in (F^{\varphi,\mathcal{A}})^r (\emptyset) \; \; \text{iff} \; \; \mathcal{A} \vDash ([QB]^r \textrm{F})[\overline{a}]$$

Here $[QB]^r$ means $QB$ literally repeated $r$ times. It follows immediately that if $t(n) = |\varphi| (n)$ and $\mathcal{A}$ is any structure of size $n$ then $$\mathcal{A} \vDash ([\textrm{LFP}_{R,x_1,\ldots,x_k} \varphi]\overline{y} \leftrightarrow ([QB]^{t(n)} \textrm{F} )(\overline {y}))[\overline{a}]
\;\;\;\;\;\text{for all} \;\overline{a}\in A^k.$$

This directly inspires the following (apparently) more general definition :

\begin{definition} \cite{Immerman}
$\textrm{FO}[t(n)]$ is defined to be the class of properties definable by quantifier blocks iterated $t(n)$ times. (This is the same as being iterated $O(t(n))$ times since a quantifier block may be any constant size).
More precisely, a class $S$ of finite structures is a member of $\textrm{FO}[t(n)]$ iff there exist quantifier-free first-order formulas $M_i$, $0\leq i \leq s$, a quantifier block $QB = (Q_1x_1,M_1)...(Q_sx_s,M_s)$, a tuple $\overline{c}$ of constants (if necessary), and a function $f(n)=O(t(n))$ such that for any structure $\mathcal{A}$,
$$\mathcal{A}\in S \;\; \Leftrightarrow \;\; \mathcal{A}\vDash ([QB]^{f(\|\mathcal{A}\|)}M_0)(\overline{c}/\overline{x})$$
The reason for the substitution of constants is that the quantifier block $QB$ may contain some free variables that must be substituted for to build a sentence.
\end{definition}
Clearly, for all polynomially bounded $t(n)$, $$\textrm{IND}[t(n)]\subseteq \textrm{FO}[t(n)]$$

\section{The Basic Theorems}
In this section we will see how depth can be regarded as a complexity measure through theorems relating it to well-known complexity classes. For example, it turns out that inductive depth eaquals parallel-time. The theorems in this section are from Immerman's "Descriptive Complexity" \cite{Immerman} and therefore we must impose here the same proviso he imposed at the beginning of his book :
\begin{proviso}\label{2.2}
We assume that the numeric relations and constants : $<$, $\textrm{PLUS}$, $\textrm{TIMES}$, $\textrm{BIT}$, $\textrm{SUC}$, $0$, $1$, $max$ are all present in all vocabularies and are interpreted as follows :
$0$, $1$, $max$ are interpreted as the minimum, second, and maximum elements under $<$ (which is the usual total ordering inherited from $\mathbb{N}$); this means we assume that all structures contain at least two elements $0$ and $1$. $\textrm{PLUS}(i,j,k)$ means that $i+j = k$, $\textrm{TIMES}(i,j,k)$ means that $i \times j = k$, $\textrm{BIT}(i,j)$ means that bit $j$ in the binary representation of $i$ is $1$, and $\textrm{SUC}$ is the usual successor.
\end{proviso}

We begin with the theorem showing that on ordered structures least fixed-point logic corresponds to the famous complexity class $\textrm{P}$ :
\begin{theorem}\label{2.2.1}(Immerman and Vardi 1982 \cite{lfpptime},\cite{Vardi}) $\newline$
$$\textrm{FO}(\textrm{LFP}) = \textrm{P}  $$
$\newline$
i.e. a class of ordered structures is definable in $\textrm{FO}(\textrm{LFP})$ if and only if it is accepted by a deterministic polynomial-time machine.
\end{theorem}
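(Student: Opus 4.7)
The plan is to prove the two inclusions separately. For the easier direction $\textrm{FO}(\textrm{LFP})\subseteq \textrm{P}$, I would begin by invoking Proposition~\ref{2.1.2} to assume the defining sentence has the form $[\textrm{LFP}_{R,\overline{x}}\varphi(R,\overline{x})]\overline{t}$ with $\varphi$ first-order. Given an input $\mathcal{A}$ with $\|\mathcal{A}\|=n$, I would compute the stages $F^{\varphi,\mathcal{A}}_0\subseteq F^{\varphi,\mathcal{A}}_1\subseteq\cdots$ iteratively: each stage is a subset of $A^k$, storable in $n^k$ bits, and producing $F^{\varphi,\mathcal{A}}_{i+1}$ from $F^{\varphi,\mathcal{A}}_i$ reduces to evaluating the fixed first-order formula $\varphi$ once at each of the $n^k$ tuples with $R$ interpreted by the stored stage, which is polynomial in $n$. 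Monotonicity forces closure after at most $n^k$ iterations, so the whole procedure runs in polynomial time, and membership of $\overline{t}^{\mathcal{A}}$ in the limit stage decides the query.

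For $\textrm{P}\subseteq \textrm{FO}(\textrm{LFP})$, the substantial direction, I would fix a deterministic Turing machine $M$ running in time $n^c$ that decides the target class of ordered structures, and encode the entire computation table of $M$ on input $bin(\mathcal{A})$ as the least fixed-point of a positive first-order formula. Because $\mathcal{A}$ is ordered and by Proviso~\ref{2.2} carries $<,\textrm{BIT},\textrm{PLUS},\textrm{TIMES},\textrm{SUC},0,1,max$, positions in $\{0,\ldots,n^c-1\}$ (and hence time steps, tape cells, and input-bit addresses) can be represented by $c$-tuples over $A$ with their arithmetic first-order definable. I would introduce auxiliary relations coding, at time $\overline{t}$ and cell $\overline{j}$, the tape symbol, the state, and the head position; the transition step of $M$ depends on only a constant-size window of the previous configuration, and so is captured by a quantifier-free formula relating stage $\overline{t}$ to stage $\overline{t}+1$. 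The initial configuration is obtained by a first-order translation of $bin(\mathcal{A})$ back into facts about the relations and constants of $\mathcal{A}$. Taking the least fixed-point of the resulting positive formula reproduces the full trajectory of $M$, after which acceptance is expressed by asking whether some reached configuration is in an accepting state.

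The principal obstacle is the bit-level translation between $bin(\mathcal{A})$ as seen by $M$ and the logical structure $\mathcal{A}$ itself: given a tape-position address $p$, one must compute in first-order logic which relation of which arity, and which tuple of elements of $A$, the $p$-th input bit corresponds to, and this positional decoding leans essentially on $\textrm{BIT}$, $\textrm{PLUS}$, and $\textrm{TIMES}$ from Proviso~\ref{2.2}. Once that encoding lemma is in place the remaining ingredients are routine: the transition relation is constant-size and quantifier-free, $R$-positivity is automatic since every occurrence of the computation-table relation appears under no negation, and the $n^c$ time bound guarantees closure of the least fixed-point within polynomially many stages.
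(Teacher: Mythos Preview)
Your proposal is correct, and for the inclusion $\textrm{FO}(\textrm{LFP})\subseteq\textrm{P}$ it is essentially the paper's argument: normalize to a single LFP, iterate the operator at most $n^k$ times, evaluate the first-order body at each tuple in polynomial (indeed logspace) time.

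For $\textrm{P}\subseteq\textrm{FO}(\textrm{LFP})$, however, you take a genuinely different route from the paper. You sketch the direct encoding of a deterministic polynomial-time machine's computation tableau as a least fixed-point, using the arithmetic of Proviso~\ref{2.2} to index time steps and tape cells by tuples of elements and to decode input-bit positions back into atomic facts about $\mathcal{A}$. The paper instead argues indirectly: it has already shown that $\textrm{REACH}_a\in\textrm{FO}(\textrm{LFP})$, it cites Theorem~\ref{2.2.2} that $\textrm{REACH}_a$ is $\textrm{P}$-complete under first-order reductions, and it observes (via the dual map $\hat{I}$) that $\textrm{FO}(\textrm{LFP})$ is closed under first-order reductions; the inclusion then follows in one line. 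Your approach is the classical self-contained argument and gives explicit control over the arity and depth of the fixed-point, at the cost of carrying out the input-decoding lemma you flag as the main obstacle. The paper's approach is shorter within this text because the heavy lifting has been outsourced to the completeness of $\textrm{REACH}_a$ and the closure machinery, and that modular pattern is reusable for other complexity classes once their complete problems are identified.
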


For a proof we must first mention some definitions and facts.

\begin{definition}(Alternating Reachability $\textrm{REACH}_a$) $\newline$
Let $\tau_{ag}= \{ E, A, s, t\}$ be the vocabulary of alternating graphs. An alternating graph $\mathcal{G} = (G, E^{\mathcal{G}} , A^{\mathcal{G}}, s^{\mathcal{G}}, t^{\mathcal{G}})$ is a directed graph
whose vertices are labeled universal or existential. $A^{\mathcal{G}}\subseteq G$
is the set of universal vertices. $\newline$ Let
$P^\mathcal{G}_a xy$ be the smallest relation on vertices of $\mathcal{G}$ such that :

\begin{enumerate}
\item[(1)] $P_a^\mathcal{G}xx$
\item[(2)] If $x$ is existential and $P_a^\mathcal{G}zy$ holds for some edge $(x,z)$ then $P^\mathcal{G}_a xy$
\item[(3)] If $x$ is universal, and there is at least one edge leaving $x$, and $P^\mathcal{G}_a zy$ holds for all edges $(x,z)$, then $P^\mathcal{G}_a xy$
\end{enumerate}

$$\textrm{REACH}_a := \{ \mathcal{G} \; | \; P_a^\mathcal{G} st \}$$

\end{definition}

It is clear that $P_a^{\mathcal{G}}st \;\; \text{iff} \;\; [\textrm{LFP}_{R,x,y}\varphi]st$ where $$\varphi := x=y \vee (Ax\wedge \exists z Exz \wedge \forall z (Exz \rightarrow Rzy)) \vee (\neg Ax \wedge \exists z (Exz \wedge Rzy))$$
also it is easy to see that if $(x,y)\in (F^{\varphi,\mathcal{G}})^k(\emptyset)$ then the distance between $x$ and $y$ in the underlying graph is $k-1$, hence the depth of $\varphi$ is $n$. Thus $\textrm{REACH}_a \in \textrm{IND}[n]$.

\begin{theorem}\label{2.2.2} \cite{I83} $\newline$
$\textrm{REACH}_a$ is complete for $\textrm{P}$ under first-order reductions.
\end{theorem}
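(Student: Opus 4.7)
The plan is to establish both halves of completeness: that $\textrm{REACH}_a$ lies in $\textrm{P}$, and that every $\textrm{P}$-problem first-order reduces to it.

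For membership in $\textrm{P}$, the computation of $P_a^{\mathcal{G}}$ is the least fixed-point of the formula $\varphi$ exhibited just before the theorem statement, and that fixed-point is reached within $\|\mathcal{G}\|$ stages. Each stage can be evaluated in polynomial time on $\mathcal{G}$, so the entire procedure is polynomial. Thus to decide whether $P_a^{\mathcal{G}}st$ holds, I would simply iterate the operator starting from $\emptyset$ until it stabilizes and check whether the pair $(s^{\mathcal{G}}, t^{\mathcal{G}})$ is in the resulting set.

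For hardness, my plan is to invoke the Chandra--Kozen--Stockmeyer characterization $\textrm{P}=\textrm{ASPACE}[\log n]$, so that any $H\in \textrm{P}$ is decided by some alternating Turing machine $M$ using $c\log n$ workspace, for a constant $c$. Given an input structure $\mathcal{A}\in \textrm{STRUC}[\sigma]$ with $n=\|\mathcal{A}\|$, I would take $I(\mathcal{A})$ to be the alternating configuration graph of $M$ on $bin(\mathcal{A})$. Because $M$ is logspace, each configuration consists of a state (constantly many bits), two head positions (each storable in one element of $\mathcal{A}$), and a work-tape content of $c\log n$ bits (storable in $O(1)$ elements via $\textrm{BIT}$). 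Hence configurations can be encoded as $k$-tuples in $A^k$ for some fixed $k$, and the universe of $I(\mathcal{A})$ lies inside $A^k$ as required by Definition~\ref{1.4.2}. I would take $s^{I(\mathcal{A})}$ to be the initial configuration of $M$ on $bin(\mathcal{A})$, and, after clearing and rewinding before acceptance, $t^{I(\mathcal{A})}$ to be a unique final configuration.

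The main step is to verify that the defining formulas $\varphi_0,\varphi_E,\varphi_A,\psi_s,\psi_t$ of $I$ are in $\textrm{FO}[\sigma]$. The transition relation is local: whether configuration $C'$ follows from $C$ depends only on $M$'s state, the input bit at the input head (which is read off $bin(\mathcal{A})$ using the numeric relations from Proviso~\ref{2.2}, in particular $\textrm{BIT}$, together with the relations and constants of $\sigma$), and the work-tape cell under the work head (accessible again via $\textrm{BIT}$). Since $M$'s transition table is finite, this check amounts to a finite boolean combination of equalities and of $\textrm{BIT}$-queries, all first-order in $\mathcal{A}$. The universal-vertex predicate $A^{I(\mathcal{A})}$ is simply a test on the state component. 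The hard part is the bookkeeping: one must choose the right tuple encoding of configurations so that the one-step transition, along with the bit-lookup into $bin(\mathcal{A})$, is expressible by a fixed first-order formula independent of $n$; once that encoding is set up, everything reduces to manipulations of $\textrm{BIT}$, $\textrm{PLUS}$, $\textrm{TIMES}$, and the relations of $\sigma$.

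Finally, it follows directly from the definition of acceptance for alternating machines (condition~2 existential, condition~3 universal) that $M$ accepts $bin(\mathcal{A})$ iff the initial configuration satisfies $P_a^{I(\mathcal{A})}$ to the accepting configuration, i.e.\ iff $I(\mathcal{A})\in \textrm{REACH}_a$. Combined with the polynomial bound $\|I(\mathcal{A})\|=O(n^k)$, this makes $I$ a first-order reduction from $H$ to $\textrm{REACH}_a$, completing the argument.
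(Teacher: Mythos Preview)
The paper does not actually prove this theorem: it is stated with a citation to \cite{I83} and then used as a black box in the proof of Theorem~\ref{2.2.1}. Your proposal is the standard argument (membership via iterating the $\textrm{LFP}$ operator, hardness via $\textrm{P}=\textrm{ASPACE}[\log n]$ and a first-order encoding of the alternating configuration graph using the numeric predicates of Proviso~\ref{2.2}), and it is correct; this is essentially how the result is proved in Immerman's \emph{Descriptive Complexity}, from which the thesis draws the statement.
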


\begin{definition}

 Let $\tau$, $\sigma$, $I$ be as in Definition \ref{1.4.2}. Then $I$ also defines a dual map, which is called $\hat{I}$, from $\textrm{FO}(\textrm{LFP})[\tau]$ to $\textrm{FO}(\textrm{LFP})[\sigma]$. For any formula $\varphi \in \textrm{FO}(\textrm{LFP})[\tau] $, $\hat{I}(\varphi)$ is the result of replacing all relation and constant symbols in $\varphi$ by the corresponding formulas from the definition of $I$, using a map $f_{I}$ defined as follows :
 \begin{enumerate}
 \item[-] Each variable is mapped to a $k$-tuple of variables : $$ f_I (v) := v^1,\ldots,v^k $$
 \item[-] Input relations are replaced by their corresponding formulas : $$f_I (R_i v_1 \ldots v_{a_i}) := \varphi_i (f_I(v_1),\ldots,f_I(v_{a_i}))$$
\item[-] Quantifiers are replaced by restricted quantifiers : $$f_I (\exists v) := (\exists f_I(v), \varphi_0 (f_I(v)))$$
\item[-] The equality relation and other numeric relations are replaced by their appropriate formulas.
\item[-] Second-order variables have their arities multiplied by $k$; and second-order quantifiers must be restricted.
\item[-] On boolean connectives, $f_I$ is the identity.
\item[-] Constant $c_i$
s replaced by a $k$-tuple of special variables (i.e. variables not used in any other place of the formula being formed) : $$f_I(c_i) := z_i^1,\ldots,z_i^k$$
    and these variables must be quantified before they are used. Typically, these quantifiers can be placed at the beginning of the formula.
 \end{enumerate}
Now the mapping $\hat{I}$ is defined as follows, for $\theta \in \textrm{FO}(\textrm{LFP})[\tau]$ : $\newline\hat{I}(\theta) := \newline \small{(\exists z_1^1 \ldots \exists z_1^k, (\varphi_0\wedge\psi_1)(z_1^1,\ldots,z_1^k)) \ldots (\exists z_s^1 \ldots \exists z_s^k, (\varphi_0\wedge\psi_s)(z_s^1,\ldots,z_s^k)) (f_I (\theta))}$
 \end{definition}

 \begin{proposition} \cite{Immerman}
Let $\tau$, $\sigma$, and $I$ be as in Definition \ref{1.4.2}. Then for all formulas $\theta \in \textrm{FO}(\textrm{LFP})[\tau]$ and all structures $\mathcal{A}\in \textrm{STRUC}[\sigma]$, and all assignments $\alpha$ in $\mathcal{A}$,
               $$\mathcal{A} \vDash \hat{I}(\theta)[\alpha] \;\; \text{iff} \;\; I(\mathcal{A})\vDash \theta[\alpha^{\prime}]$$
               Where $\alpha^{\prime}$ is defined at a variable $x$ iff  $(\alpha(x^1),\ldots,\alpha(x^k))\in I(\mathcal{A})$ (Recall that $f_I(x)=x^1,\ldots,x^k$). In this case $\alpha^{\prime}(x) = ( \alpha(x^1),\ldots,\alpha(x^k) )$.
\end{proposition}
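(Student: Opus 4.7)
The plan is to proceed by structural induction on the formula $\theta$. As a preliminary step I would dispose of the leading existential block $(\exists z_i^1 \ldots \exists z_i^k, (\varphi_0 \wedge \psi_i)(z_i^1, \ldots, z_i^k))$ in $\hat{I}(\theta)$: by the uniqueness clause attached to each $\psi_i$ in Definition \ref{1.4.2}, this block merely pins $(z_i^1, \ldots, z_i^k)$ down to the unique $k$-tuple representing $c_i^{I(\mathcal{A})}$. Hence the claim reduces to proving the analogous equivalence for $f_I(\theta)$ under an extended assignment in which the $z_i^j$ carry these tuples, which matches exactly what $\alpha'$ supplies on the right-hand side.

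The base cases are routine. An equality $x = y$ is translated to $x^1 = y^1 \wedge \ldots \wedge x^k = y^k$, which holds in $\mathcal{A}$ iff the two $k$-tuples are literally equal in $A$, i.e.\ iff $\alpha'(x) = \alpha'(y)$ in $I(\mathcal{A})$. An atom $R_i v_1 \ldots v_{a_i}$ becomes $\varphi_i(f_I(v_1), \ldots, f_I(v_{a_i}))$, and the equivalence is immediate from the defining property of $R_i^{I(\mathcal{A})}$. Boolean connectives propagate without modification. For a first-order quantifier $\exists v$, the restricted translation $(\exists f_I(v), \varphi_0(f_I(v)))$ ranges precisely over $k$-tuples in $A$ lying in $|I(\mathcal{A})|$, which, by the inductive hypothesis on the body, matches the semantics of $\exists v$ in $I(\mathcal{A})$. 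Second-order quantifiers work analogously: a relation variable $Y$ of arity $m$ is replaced by one of arity $mk$, with the quantifier restricted to relations contained in $|I(\mathcal{A})|^m$, via the natural bijection between $m$-ary relations on $|I(\mathcal{A})|$ and restricted $mk$-ary relations on $A$.

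The genuinely delicate case, which I expect to be the main obstacle, is the $\textrm{LFP}$ operator. For $\theta = [\textrm{LFP}_{R, \overline{x}} \psi(R, \overline{x})]\overline{t}$ with $R$ of arity $m$, the translation yields a least-fixed-point formula whose bound relation variable $R'$ has arity $mk$; positivity is preserved because $\hat{I}$ introduces no negations around $R$-atoms, so the resulting formula is genuinely in $\textrm{FO}(\textrm{LFP})$. The heart of the argument is a secondary induction on the stage index $n$ establishing, under the natural identification of $|I(\mathcal{A})|^m$ with the subset of $A^{mk}$ whose $k$-blocks each satisfy $\varphi_0$,
$$F_n^{\hat{I}(\psi),\, \mathcal{A}} \cap |I(\mathcal{A})|^m \;=\; F_n^{\psi,\, I(\mathcal{A})}.$$
The base case $n = 0$ is trivial, and the step $n \mapsto n+1$ invokes the outer hypothesis on the structurally simpler formula $\psi$, with the $n$-th stage serving as the interpretation of $R$ on one side and of $R'$ on the other. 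Since this holds at every stage, the fixed-points coincide, and the equivalence for $\theta$ follows by evaluating at $f_I(\overline{t})$, which by construction lies in $|I(\mathcal{A})|^m$. A small care-point is that $\psi$ may carry further free first-order and relation variables besides $R$; this causes no trouble because the outer inductive hypothesis is universally quantified over assignments and so applies uniformly to every fixed interpretation of those extra parameters.
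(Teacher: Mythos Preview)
The paper does not actually supply a proof of this proposition: it is stated with a citation to Immerman's \emph{Descriptive Complexity} and is then used immediately in the subsequent remarks without further argument. So there is no ``paper's own proof'' to compare against.

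That said, your proposed argument is the standard one and is essentially what one finds in Immerman's book. The structural induction on $\theta$, with the stage-by-stage inner induction for the $\textrm{LFP}$ case, is exactly the right decomposition, and your handling of the leading existential block for constants and the restriction of quantifiers via $\varphi_0$ is correct. One small point worth tightening: in the $\textrm{LFP}$ case you write $F_n^{\hat{I}(\psi),\,\mathcal{A}} \cap |I(\mathcal{A})|^m$, but depending on exactly how the translation is set up (in particular whether the defining formula of the translated fixed-point is conjoined with $\varphi_0$ on each $k$-block, as the paper's subsequent remark indicates), the intersection may be redundant because every stage already lives inside $|I(\mathcal{A})|^m$. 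This is cosmetic and does not affect correctness.
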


 \begin{remark}
Let $\sigma,\tau,I$ be as in Definition \ref{1.4.2}, let $K$ be a class of $\sigma$-structures and $H$ a class of $\tau$-structures. Suppose that $K$ is first-order reducible to $H$ via $I$, and $H$ is definable by the least-fixed point sentence $\varphi$. Then $\mathcal{A} \in K$ iff $I(\mathcal{A})\in H$ iff $I(\mathcal{A}) \vDash \varphi$ iff $\mathcal{A} \vDash \hat{I}(\varphi)$. Thus, to prove that $\textrm{FO}(\textrm{LFP})$ is closed under first-order reductions, it suffices to prove that for any least-fixed point formula $\varphi$ from $\textrm{FO}(\textrm{LFP})[\tau]$, $\hat{I} (\varphi)$ is in $\textrm{FO}(\textrm{LFP})[\sigma]$. But this follows directly from the fact that $\textrm{FO}(\textrm{LFP})$ is closed under boolean operations and first-order quantification.
\end{remark}

\begin{remark}
Note that if $\theta := [\textrm{LFP}_{R,x,y}\varphi(R,x,y)]xy$ is of depth $n^m$, then : $\newline \hat{I}(\theta)$ is $[\textrm{LFP}_{S,x^1,\ldots,x^k,y^1,\ldots,y^k} (\forall \overline{t}(S\overline{t}\rightarrow \varphi_o(\overline{t}))\wedge \varphi_0(\overline{x}) \wedge \varphi_0(\overline{y}) \wedge \hat{I}(\varphi))]\overline{x}\overline{y}$, and is of depth $n^{km}$, because for any structure $\mathcal{A}$, and any formula $\varphi$, $\varphi$ takes on $I(\mathcal{A})$ exactly the same number of iterations as $\hat{I}(\varphi)$ on $\mathcal{A}$ to come to its fixed point, while $\|I(\mathcal{A})\| = \|\mathcal{A}\|^k$. For this reason we tend to think that $\textrm{IND}[n^m]$ would not turn out to be closed under first-order reductions.
\end{remark}

\begin{remark}
The hierarchy $\textrm{IND}[n] \subseteq \textrm{IND}[n^2] \subseteq \textrm{IND}[n^3] \subseteq \ldots$ collapses to the $i$-th level if and only if $\textrm{IND}[n^i]$ is closed under first-order reductions. In that case $\textrm{P} = \textrm{IND}[n^i]$. To see this, note that from Theorem \ref{2.2.1} $\textrm{P}=\textrm{FO}(\textrm{LFP})$, and since $\textrm{FO}(\textrm{LFP}) = \underset{k=1}{\overset{\infty}{\bigcup}} \textrm{IND}[n^k]$, then $\textrm{P} = \underset{k=1}{\overset{\infty}{\bigcup}}\textrm{IND}[n^k] $. So, if this hierarchy collapses to the $i$-th level, then $\textrm{P} = \textrm{IND}[n^i]$, and then $\textrm{IND}[n^i]$ is closed under first-order reductions because $\textrm{P}$ is so closed. $\textrm{REACH}_a \in \textrm{IND}[n^i]$, for any $i \geq 1$, and it is $\textrm{P}$-complete under first-order reductions. Thus, if $\textrm{IND}[n^i]$ is closed under first-order reductions, then $\textrm{P}=\textrm{IND}[n^i]$, i.e., the hierarchy collapses to the $i$-th level.
\end{remark}

We also need the following fact :

\begin{theorem} \cite{Immerman}
Every first-order boolean query is computable in deterministic logspace.
\end{theorem}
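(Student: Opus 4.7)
The plan is to argue by structural induction on the fixed first-order sentence $\varphi$ that defines the query, showing that a deterministic Turing machine can evaluate $\mathcal{A}\vDash\varphi[\alpha]$ using only $O(\log n)$ workspace, where $n=\|\mathcal{A}\|$ and $\alpha$ is a partial assignment to the variables occurring in $\varphi$. The crucial observation is that $\varphi$ itself is a \emph{fixed} object of constant size: it does not depend on the input, so quantities like the number of variables, the quantifier rank, and the size of the parse tree are all absolute constants, and only the elements assigned to variables grow with $n$.

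First I would describe the data the machine keeps on its work tape during recursive evaluation: a pointer into the (hard-coded) parse tree of $\varphi$ indicating the current subformula, and the current partial assignment $\alpha$, which consists of at most $|\mathrm{var}(\varphi)|$ elements of $A$. Since each element of $A$ can be written in $\lceil\log n\rceil$ bits and the number of variables is a constant depending only on $\varphi$, the assignment uses $O(\log n)$ bits. I would then specify the evaluation at each syntactic case: for an atomic formula $Rt_1\cdots t_k$, the machine computes the indices $\alpha(t_1),\ldots,\alpha(t_k)$, uses Proviso~\ref{2.2} to determine the position in $bin(\mathcal{A})$ where this tuple is encoded (this position is a polynomial in the $\alpha(t_i)$ and $n$, computable in $O(\log n)$ space), and reads the bit directly from the input tape; for $t_1=t_2$ it just compares $\alpha(t_1)$ and $\alpha(t_2)$; the Boolean cases $\neg\psi$ and $\psi\vee\chi$ recurse; and for $\exists x\,\psi$ the machine iterates a counter through the values $0,1,\ldots,n-1$, storing each successive candidate in the slot for $x$, recursively evaluates $\psi$, and accepts as soon as one candidate succeeds.

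The numeric built-ins $<,\textrm{PLUS},\textrm{TIMES},\textrm{BIT},\textrm{SUC}$ and the constants $0,1,\max$ required by Proviso~\ref{2.2} must be handled explicitly rather than read from the input. I would invoke the well-known fact that integer comparison, addition, multiplication, and bit extraction of numbers of $O(\log n)$ bits are all computable in deterministic logspace, so each such atomic evaluation fits within the budget. Likewise, the address arithmetic needed to locate the bit of $bin(\mathcal{A})$ corresponding to a tuple $(\alpha(t_1),\ldots,\alpha(t_k))$ involves a constant number of multiplications and additions of $O(\log n)$-bit integers and is therefore logspace-computable.

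Finally I would bound the total space. The recursion depth is at most the depth $d$ of the parse tree of $\varphi$, a constant; at each level the machine only needs to remember the value of the currently-bound quantifier variable plus a constant number of scratch registers for arithmetic, each of size $O(\log n)$. Thus the full work-tape usage is $d\cdot O(\log n)+O(\log n)=O(\log n)$. The main subtlety — and the step I would write most carefully — is that the recursion must be implemented in a way that does not blow up the space, which is handled by reusing the same fixed set of work-tape cells at each recursive call (this is safe because after returning from a recursive call on $\psi$ the calling context only needs to know a single truth value, not the internal state of the subcall); alternatively one can simulate the recursion iteratively using the pointer into the parse tree. Either way the space bound is $O(\log n)$, completing the proof.
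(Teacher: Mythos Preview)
The paper does not actually supply a proof of this statement: it is quoted as a fact from Immerman's \emph{Descriptive Complexity} and immediately used in the proof of Theorem~\ref{2.2.1}. Your argument is correct and is essentially the standard proof one finds in that reference --- evaluate the fixed sentence by a depth-first walk of its (constant-size) parse tree, maintaining the current partial assignment of $O(1)$ variables in $O(\log n)$ bits each, and handle atomic subformulas by logspace arithmetic to locate the relevant bit of $bin(\mathcal{A})$.

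One small wording issue: the phrase ``reusing the same fixed set of work-tape cells at each recursive call'' is slightly misleading, since the variable bindings at distinct quantifier levels must all be retained while a deeper call is active and hence cannot literally share cells. What is true (and what you correctly compute in the final paragraph) is that the nesting depth $d$ is a constant, so the total live storage is $d\cdot O(\log n)=O(\log n)$; the reuse happens only \emph{after} a subcall returns. With that clarification the argument is complete.
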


Now we prove Theorem \ref{2.2.1}
\begin{proof}
\begin{enumerate}
\item[] $\textrm{FO}(\textrm{LFP}) \subseteq \textrm{P}$ : $\newline$
Let $\mathcal{A}$ be an input structure, let $n = \|\mathcal{A}\|$, and let $[\textrm{LFP}_{R,x_1,\ldots,x_k}\varphi]\overline{y}$ be a least fixed-point formula (where $\varphi$ is first-order). This fixed point evaluated on $\mathcal{A}$ is $(F^{\varphi,\mathcal{A}})^{n^k}(\emptyset)$. We know from the previous theorem that for any $S \subseteq A^k$, any $( a_1,\ldots,a_k )$, whether $\mathcal{A}$ satisfies $\varphi(S,a_1,\ldots,a_k)$ can be determined in logspace. Let $M_0$ be the logspace Turing machine that computes the query $\varphi(S,a_1,\ldots,a_k)$ (The vocabulary now contains an extra $k$-ary relation symbol $S$, and extra $k$ constants). $\newline$ Let $M$ be the Turing machine that, starting with $S = \emptyset$ (let it have a tape for coding $S$ in $n^k$ bits), cycles through all possibilities for $(a_1,\ldots,a_k)$, substitutes each of these for $( x_1,\ldots,x_k )$ and runs $M_0$ to see whether $\mathcal{A} \vDash \varphi(S,a_1,\ldots,a_k)$, then writes $1$ on the bit corresponding to $( a_1,\ldots,a_k )$ in the tape of $S$  if this holds.$\newline$ The machine $M$ does this for $n^k$ times (the number of tuples). $S$ is coded on $n^k$ bits of a tape, so rewriting a bit of it takes, in addition to the logspace work of $M_0$, a polynomial time (to reach the bit that will be rewritten). From this and since $M_0$ is run $n^k$ times (the number of stages to reach the fixed-point), and $M_0$ is logspace, and $\textrm{L} \subseteq \textrm{P}$, then $M$ is a polynomial time machine.
\item[] $\textrm{P} \subseteq \textrm{FO}(\textrm{LFP})$ : $\newline$
         Since any query in $\textrm{P}$ is first-order reducible to $\textrm{REACH}_a$ by Theorem \ref{2.2.2}, and $\textrm{REACH}_a \in \textrm{FO}(\textrm{LFP})$, and $\textrm{FO}(\textrm{LFP})$ is closed under first-order reductions, then any query in $\textrm{P}$ is in $\textrm{FO}(\textrm{LFP})$.
\end{enumerate}
\end{proof}

There is an analogue of this theorem for polynomial space. The complexity class of polynomial space has several descriptive characterizations. One of these is in terms of depth of inductive definitions; it is the set of boolean queries expressible by first-order quantifier blocks iterated exponentially :

\begin{theorem} $\newline$\label{2.2.11}
$$\textrm{PSPACE} = \textrm{FO}(\textrm{PFP}) = \textrm{FO}[2^{n^{O(1)}}]$$
\end{theorem}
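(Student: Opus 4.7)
The plan is to prove both inclusions $\textrm{FO}(\textrm{PFP}) \subseteq \textrm{PSPACE}$ and $\textrm{FO}[2^{n^{O(1)}}] \subseteq \textrm{PSPACE}$, together with the converse containments $\textrm{PSPACE} \subseteq \textrm{FO}(\textrm{PFP})$ and $\textrm{PSPACE} \subseteq \textrm{FO}[2^{n^{O(1)}}]$. The first two inclusions go by direct polynomial-space simulation of the iteration; the last two go by simulating a deterministic $\textrm{PSPACE}$ machine, once with a $\textrm{PFP}$ operator and once with an iterated quantifier block. By Proposition \ref{2.1.2} I may assume every $\textrm{FO}(\textrm{PFP})$-sentence has the form $[\textrm{PFP}_{R,\overline{x}}\varphi]\overline{t}$ with $\varphi$ first-order and $R$ of arity $k$, paralleling the simplification used in the proof of Theorem \ref{2.2.1}.

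For $\textrm{FO}(\textrm{PFP}) \subseteq \textrm{PSPACE}$, a Turing machine stores the current stage $S_i$ (an $n^k$-bit $k$-ary relation) on its work tape, computes $S_{i+1}$ by evaluating $\varphi(S_i,\overline{a})$ for every $\overline{a}\in A^k$ in polynomial space, and keeps an $n^k$-bit binary counter running up to $2^{n^k}$ in order to detect whether the fixed point has been reached (and, if not, declare the answer false). For $\textrm{FO}[2^{n^{O(1)}}] \subseteq \textrm{PSPACE}$ the same scheme applies: set $T_0 := \{\overline{x} : M_0(\overline{x})\}$, inductively take $T_{i+1}$ to be the relation defined by evaluating $[QB]$ against the stored $T_i$, and iterate $2^{n^{O(1)}}$ times with an $n^{O(1)}$-bit counter.

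For $\textrm{PSPACE} \subseteq \textrm{FO}(\textrm{PFP})$, I would directly simulate a deterministic $\textrm{PSPACE}$ machine $M$ using $n^c$ cells of work tape. A configuration of $M$ on input $\mathcal{A}$ is encodable as a single relation of bounded arity on the universe $A$: work-tape cells are indexed by $c$-tuples, giving a relation ``cell $\overline{p}$ contains symbol $\sigma$'', and additional coordinates of constant arity encode the head position, the state, and a $c$-tuple step counter. The one-step transition ``$S$ is the successor configuration of $R$'' is uniformly first-order definable from the transition table of $M$ using the numeric predicates of Proviso \ref{2.2}. Arranging that every halting configuration is a self-loop under the transition forces the $\textrm{PFP}$ iterating this transition from the initial configuration to reach a genuine fixed point within $2^{n^c}$ steps, and acceptance of $\mathcal{A}$ by $M$ becomes a first-order statement about that fixed point. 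The inclusion $\textrm{PSPACE} \subseteq \textrm{FO}[2^{n^{O(1)}}]$ then follows by writing this same transition formula as a constant-size quantifier block and iterating it literally $2^{n^c}$ times.

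The main obstacle I expect is the configuration encoding in the third step: packaging state, head position, tape contents, and the halting counter into a single bounded-arity relation while keeping the one-step transition uniformly first-order definable is delicate, and one must arrange matters so that $\textrm{PFP}$'s convention of returning $\emptyset$ when no fixed point exists does not corrupt the simulation. The self-loop-on-halting device handles this, provided the predicate ``configuration $R$ is halted'' is itself first-order expressible from $R$.
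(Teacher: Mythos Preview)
The paper does not actually prove this theorem; immediately after the statement it simply attributes $\textrm{PSPACE}=\textrm{FO}(\textrm{PFP})$ to Vardi and $\textrm{PSPACE}=\textrm{FO}[2^{n^{O(1)}}]$ to Immerman, and then announces that the next batch of results will be stated ``without proof''. So there is no paper proof to compare your proposal against.

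Judged on its own, your outline is the standard argument from those references and is essentially correct. One quibble in the $\textrm{PSPACE}\subseteq\textrm{FO}(\textrm{PFP})$ direction: you include a ``$c$-tuple step counter'' among the data encoded into the configuration relation. Read literally as a tuple of $c$ domain elements, such a counter can only reach $n^c$, whereas an $n^c$-space machine may run for up to $2^{n^c}$ steps; if you intended the counter to be a $c$-ary relation (an $n^c$-bit binary number), you should say so. In any case the counter is unnecessary: a deterministic polynomial-space machine can be assumed to halt on every input, and the self-loop-on-halting device you already invoke forces the $\textrm{PFP}$ iteration to stabilise at the halting configuration within $2^{n^c}$ steps. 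Dropping the counter cleans up the encoding and sidesteps the issue entirely.
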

The equality of $\textrm{PSPACE}$ and $\textrm{FO}(\textrm{PFP})$ was shown by Vardi (1982) \cite{Vardi}, and the fact that $\textrm{PSPACE} = \textrm{FO}[2^{n^{O(1)}}]$ was shown by Immerman (1980) \cite{Im}.
$\newline$
We now exhibit without proof theorems relating certain inductive depths (possibly with another restriction) with certain complexity classes (with a corresponding restriction).$\newline$ We begin with the theorem showing that inductive depth equals parallel-time and equals circuit depth in circuit complexity, but first we should specify the definitions of these complexity measures :

\begin{definition} \cite{Immerman} $\newline$
A \emph{concurrent random access machine} (\textrm{CRAM}) consists of a large number of processors, all connected to a common, global memory. The processors are identical except that they each contain a unique processor number. At each step, any number of processors may read or write any word of global memory. If several processors try to write the same word at the same time, then the lowest numbered processor succeeds.$\newline$ This is the "priority write" model. The results in this section remain true if instead we use the ``common write" model, in which the program guarantees that different values will never be written to the same location at the same time. $\newline$
Each processor has a finite set of registers, including the following, Processor : containing the number between $1$ and $p(n)$ of the processor (the number of processors in the machine is a function $p(n)$ of the size of the input); Address : containing an address of global memory; Contents : containing a word to be written or read from global memory; and ProgramCounter : containing the line number of the instruction to be executed next. The instructions of a \textrm{CRAM} consist of the following : $\newline\newline$
\textrm{READ} : Read the word of global memory specified by Address into Contents.$\newline$
\textrm{WRITE} : Write the Contents register into the global memory location specified by Address.$\newline$
\textrm{OP} $R_a$ $R_b$ : Perform \textrm{OP} on $R_a$ and $R_b$ and leave the result in $R_b$. Here \textrm{OP} may be Add, Subtract, or, Shift (where Shift$(x,y)$ is the instruction that causes the word $x$ to be shifted $y$ bits to the right).$\newline$
\textrm{MOVE} $R_a$ $R_b$ : Move $R_a$ to $R_b$. $\newline$
\textrm{BLT} $R$ $L$ : Branch to line $L$ if the content of $R$ is less than zero.

The above instructions each increment the ProgramCouter, with the exception of \textrm{BLT} which replaces it by $L$, when the content of $R$ is less than zero. $\newline$
We assume initially that the contents of the first $|bin(\mathcal{A})|$ words of global memory contain one bit each of the input string $bin(\mathcal{A})$. We assume also that a section of global memory is specified as the output. One of the bits of the output may serve as a flag indicating that the output is available. $\newline$
Our measure of parallel-time complexity will be time on a $\textrm{CRAM}$. Define $\textrm{CRAM}[t(n)]$ to be the set of boolean queries computable in parallel-time $t(n)$ on a $\textrm{CRAM}$ that has at most polynomially many processors.
\end{definition}

\begin{definition} (Circuit Complexity) $\newline$
Let $t(n)$ be a polynomially-bounded function and let $S$ be a class of finite structures (of some fixed vocabulary). $S$ is in $\textrm{AC}[t(n)]$ if there is a uniform class of circuits $C_1,C_2,\ldots$ with the following properties :
\begin{enumerate}
\item[(1)] For all structures $\mathcal{A}$, $\mathcal{A}\in S \;\;\; \text{iff} \;\;\; C_{||\mathcal{A}||} \; \text{accepts} \; \mathcal{A}$.
\item[(2)] There is a function $f(n)= O(t(n))$ such that for every $n$, the depth of $C_n$ is $f(n)$.
\item[(3)] For every $n$, the gates of $C_n$ consist of unbounded fan-in "and" and "or" gates.

\end{enumerate}
\end{definition}

\begin{theorem}\label{2.2.14}\cite{Expressibility},\cite{Immerman} $\newline$
For all polynomially-bounded parallel-time constructible $t(n)$ :
 $$\textrm{IND}[t(n)] = \textrm{FO}[t(n)] = \textrm{CRAM}[t(n)] = \textrm{AC}[t(n)]$$
\end{theorem}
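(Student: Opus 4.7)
The plan is to close a four-way cycle of inclusions. The inclusion $\textrm{IND}[t(n)] \subseteq \textrm{FO}[t(n)]$ is already recorded in the excerpt (it follows immediately from the Moschovakis-style normal form for positive formulas as a quantifier block $QB$ iterated $|\varphi|(n)$ times). So it suffices to show
$$\textrm{FO}[t(n)] \subseteq \textrm{AC}[t(n)] \subseteq \textrm{CRAM}[t(n)] \subseteq \textrm{IND}[t(n)].$$
Each direction is a simulation argument, and in each case the point is that one ``unit'' of resource on one side (a single iteration of $QB$, a single circuit level, a single parallel step, or a single iteration of $F^\varphi$) maps to $O(1)$ units of resource on the other side.

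For $\textrm{FO}[t(n)] \subseteq \textrm{AC}[t(n)]$, I would take a property defined by $[QB]^{f(n)} M_0$ and translate each occurrence of $(\exists x, M)$ into an unbounded fan-in OR over all $n$ values of $x$ (conditioned on $M$), each $(\forall x, M)$ into an unbounded fan-in AND, and the quantifier-free formulas into constant-depth subcircuits. Since $QB$ has a fixed size, one iteration contributes $O(1)$ depth, giving total depth $O(t(n))$. Uniformity of the resulting circuit family follows because the atoms, the vocabulary and the bound $f(n)$ are all first-order definable from $1^n$, using the numeric relations granted by Proviso \ref{2.2}. For $\textrm{AC}[t(n)] \subseteq \textrm{CRAM}[t(n)]$, I assign one processor per gate of $C_n$ and evaluate all gates at a given level in a single CRAM step: an unbounded fan-in OR is realised by having every processor that holds a true input wire write $1$ concurrently to the output word (priority or common write), and symmetrically for AND by writing $0$. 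Each level thus costs $O(1)$ parallel time, so depth $O(t(n))$ becomes parallel time $O(t(n))$ on polynomially many processors.

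The hard direction is $\textrm{CRAM}[t(n)] \subseteq \textrm{IND}[t(n)]$. The plan is to encode a CRAM configuration by a bounded collection of relations over the input structure $\mathcal{A}$: for each processor $p$ and each of its registers $R$, a relation $\textrm{Val}_R(p, \overline{y})$ holding when the contents of $R$ of processor $p$ at the current step equal the tuple $\overline{y}$ (using $\log n$-many coordinates to encode a $\log n$-bit word via the ordering and $\textrm{BIT}$), together with relations for the program counter and for the global memory $\textrm{Mem}(\overline{a}, \overline{y})$. A single parallel step is then first-order definable: the contents after one step depend only on the current register values, the program counter, the currently addressed memory word, and the rule for priority-write collisions (``this processor writes into address $\overline{a}$ and no processor with smaller number writes into $\overline{a}$''), all of which can be expressed using the numeric predicates granted by Proviso \ref{2.2}. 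Iterating this transition formula yields a simultaneous fixed-point of the whole configuration; its depth is $O(t(n))$ because one unfolding corresponds to one CRAM step. By Proposition \ref{2.1.2} this simultaneous iteration may be rolled into a single $\textrm{LFP}$ operator over a wider relation, placing the query in $\textrm{IND}[t(n)]$.

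I expect the main obstacle to be precisely this last step: designing the encoding so that the \emph{whole} transition, including concurrent-write arbitration across polynomially many processors, is captured by a single first-order formula that fires once per CRAM clock tick, and verifying that the resulting fixed-point has depth $O(t(n))$ rather than some polynomial blow-up. The subtleties are (i) packing $\log n$-bit words and program counters into $O(1)$-ary relations using the available numeric predicates, (ii) handling the priority-write rule uniformly (an $\exists p' (p' < p \land \ldots)$ quantifier is enough, but one must be careful that all bookkeeping stays within one quantifier block so the iteration count matches), and (iii) extracting the final output bit as a first-order reduct of the fixed-point. Once this simulation is in place, the inclusion composes with the easy cycle to yield the chain of equalities.
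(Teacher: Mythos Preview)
The paper does not prove this theorem: it is introduced with the sentence ``We now exhibit without proof theorems relating certain inductive depths (possibly with another restriction) with certain complexity classes'' and is simply cited from \cite{Expressibility} and \cite{Immerman}. So there is no in-paper proof to compare your proposal against.

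That said, your outline is the standard one from the cited sources. The four-way cycle you close, the translation of restricted quantifiers into unbounded fan-in gates for $\textrm{FO}[t(n)] \subseteq \textrm{AC}[t(n)]$, the level-by-level gate evaluation by concurrent write for $\textrm{AC}[t(n)] \subseteq \textrm{CRAM}[t(n)]$, and the first-order encoding of one CRAM step as an inductive transition for $\textrm{CRAM}[t(n)] \subseteq \textrm{IND}[t(n)]$ are exactly how Immerman structures the argument in \cite{Immerman}. Your identification of the hard direction and its subtleties (word-packing via $\textrm{BIT}$, priority-write arbitration via a $\exists p' < p$ guard, keeping everything inside a single quantifier block so the iteration count is preserved) is accurate, and the appeal to Proposition~\ref{2.1.2} to collapse the simultaneous induction into a single $\textrm{LFP}$ is the right cleanup move. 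One small correction: in the $\textrm{CRAM}\subseteq\textrm{IND}$ direction you should make explicit that the parallel-time constructibility of $t(n)$ is what lets you first-order define when to stop the simulation and read off the output; otherwise the fixed-point may not know when the machine has finished.
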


The number of variables in an inductive definition determines the number of processors needed in the corresponding $\textrm{CRAM}$ computation. The intuitive idea is that using $k$ $\log{n}$-bit variables, we can name approximately $n^k$ different parts of the $\textrm{CRAM}$. Thus, very roughly, $k$ variables correspond to $n^k$ processors. The coming theorem illustrates the relationship between the number of variables in an inductive definition and the number of processors in the corresponding $\textrm{CRAM}$.$\newline$ Before the theorem we must mention some definitions :

\begin{definition} \cite{Immerman}
$\textrm{CRAM-PROC}[t(n),p(n)]$ is the complexity class $\textrm{CRAM}[t(n)]$ restricted to machines using at most $O(p(n))$ processors.
\end{definition}

\begin{definition} \cite{Immerman}
$\textrm{IND-VAR}[t(n),s]$ is the complexity class $\textrm{IND}[t(n)]$ restricted to inductive definitions using at most $s$ distinct variables.
\end{definition}

\begin{theorem}\label{2.2.17} \cite{Expressibility}
If the maximum size of a register word and $t(n)$ are both $o(\sqrt{n})$ and $k\geq 1$ is a natural number, then
$$\small{\textrm{CRAM-PROC}[t(n),n^k]  \subseteq  \textrm{IND-VAR} [t(n), 2k+2]  \subseteq \textrm{CRAM-PROC}[t(n),n^{2k+2}]}$$
\end{theorem}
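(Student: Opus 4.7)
The proof proceeds by establishing the two inclusions separately; the backward inclusion $\textrm{IND-VAR} \subseteq \textrm{CRAM-PROC}$ is the more direct, so I would dispatch it first.

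For $\textrm{IND-VAR}[t(n),2k+2]\subseteq \textrm{CRAM-PROC}[t(n),n^{2k+2}]$, consider a property defined by $[\textrm{LFP}_{R,\bar{x}}\varphi(R,\bar{x})]\bar{c}$, where $\varphi$ uses at most $s=2k+2$ variables and has depth $O(t(n))$. I would assign one CRAM processor to each $s$-tuple $\bar{a}\in A^s$, giving the required $n^{2k+2}$ processors. A dedicated block of global memory holds the current stage of $R$, one bit per tuple of arity $\mathrm{arity}(R)\le s$. During each iteration, the processor indexed by $\bar{a}$ reads the few bits of $R$ relevant to evaluating $\varphi$ under the assignment $\bar{a}$; because $\varphi$ is a fixed first-order formula with constant quantifier structure over $s$ variables, the standard bottom-up parallel evaluation terminates in $O(1)$ time when one processor is allocated per assignment. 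After a global synchronisation the processors jointly rewrite the bit-map of $R$. Iterating $t(n)$ times produces the fixed point within parallel time $O(t(n))$.

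For $\textrm{CRAM-PROC}[t(n),n^k]\subseteq \textrm{IND-VAR}[t(n),2k+2]$, the plan is to construct an inductive definition that simulates the CRAM in lock-step. I would encode the complete configuration of the machine at each time step by a relation $R$ of arity at most $2k+2$: $k$ coordinates index one of the $n^k$ processors, another $k$ coordinates index a memory word or register slot, and the remaining two coordinates serve as bookkeeping (distinguishing register fields, and providing scratch positions for auxiliary quantification during an update). The hypothesis that both the register word size and $t(n)$ are $o(\sqrt{n})$ is essential here: it guarantees that every register value and every address ever encountered can be represented within a constant number of structure elements, so that every instruction type (\textrm{READ}, \textrm{WRITE}, \textrm{MOVE}, \textrm{BLT}, and the arithmetic/shift operations) is expressible as a first-order formula in the background numeric vocabulary of Proviso \ref{2.2}. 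The heart of the construction is then a single first-order formula $\varphi$, using only $2k+2$ distinct variables, that takes the encoding of the configuration at stage $i$ and produces the encoding at stage $i+1$; the inductive depth of this $\varphi$ equals $t(n)$ up to a constant factor.

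The main obstacle is squeezing one parallel step of a \emph{priority-write} CRAM into such a $\varphi$. The critical case is the resolution of write conflicts: the formula must assert, at a given memory cell, that there exists a processor writing to it and that no processor with a strictly smaller index is writing to the same cell. This forces one existential quantifier ranging over candidate writers together with one universal quantifier over smaller-numbered competitors, so that both auxiliary variables of the ``$+2$'' in $2k+2$ are genuinely consumed. Verifying carefully that this quantifier block stays within the $2k+2$ budget simultaneously for all instruction types, and that arithmetic on register contents (made tractable precisely by the $o(\sqrt{n})$ word-size bound) never forces an additional variable, is the technical core of the forward inclusion and the step I would expect to occupy most of the proof.
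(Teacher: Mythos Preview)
The paper does not actually prove this theorem: it is introduced in the surrounding text with the phrase ``We now exhibit without proof theorems relating certain inductive depths \ldots\ with certain complexity classes,'' and the statement is simply cited from Immerman's paper \cite{Expressibility}. There is therefore nothing in the paper's own argument to compare your proposal against.

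For what it is worth, your outline tracks the standard proof as given by Immerman. The backward inclusion via one processor per $s$-tuple is exactly the intended simulation, and for the forward inclusion the encoding of a CRAM configuration by a relation of arity $2k+2$ together with the priority-write analysis is the heart of the original argument. The role you assign to the $o(\sqrt{n})$ hypothesis is also correct: it keeps register contents and addresses representable by a bounded number of domain elements so that each instruction is first-order definable in the numeric vocabulary. What is missing from your sketch, and what occupies the bulk of the original proof, is the detailed bookkeeping showing that the single formula $\varphi$ can be written so that every subformula reuses variables from the same pool of $2k+2$ names; in particular, reading the current configuration while simultaneously writing the next one, and handling the arithmetic operations (Add, Subtract, Shift) on $o(\sqrt{n})$-bit words, each require careful variable recycling that you have only gestured at.
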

We conclude the section with a theorem illustrating the relationship between inductive depth and alternating complexity. (Recall that the definition of alternating Turing machines involved alternating between universal and existential states and that in $\textrm{FO}[t(n)]$ when the quantifier block is iterated the quantifiers alternate).

\begin{definition}\cite{Immerman}

$\textrm{ASPACE-ALT}[f(n),g(n)]$ is the class of boolean queries accepted by alternating Turing machines simultaneously using space $f(n)$ and making at most $g(n)$ alternations between existential and universal states starting with existential.
\end{definition}

\begin{theorem} (Ruzzo and Tompa 1984 \cite{Stockmeyer})$\newline$
For $t(n) \geq \log{n}$,
$$\textrm{ASPACE-ALT}[\log{n},t(n)] = \textrm{AC}[t(n)] = \textrm{FO}[t(n)]$$
\end{theorem}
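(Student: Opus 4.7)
The equality $\textrm{AC}[t(n)]=\textrm{FO}[t(n)]$ is already supplied by Theorem \ref{2.2.14}, so the real work is to establish $\textrm{ASPACE-ALT}[\log n,t(n)]=\textrm{AC}[t(n)]$. My plan is to prove this by a direct configuration-graph-to-circuit translation in both directions, exploiting the hypothesis $t(n)\geq\log n$ to keep everything uniform.

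For the inclusion $\textrm{ASPACE-ALT}[\log n,t(n)]\subseteq\textrm{AC}[t(n)]$, fix an alternating Turing machine $M$ running in space $\log n$ with at most $g(n)=O(t(n))$ alternations. On input of size $n$ there are only polynomially many configurations, so for each $n$ I build a circuit $C_n$ whose gates are indexed by configurations of $M$: a configuration in a universal state becomes an $\wedge$-gate whose inputs are the gates of all legal successor configurations, a configuration in an existential state becomes an $\vee$-gate, and final accepting configurations become constant $1$ leaves (rejecting ones become $0$). Because inputs are read by projecting onto one bit of $bin(\mathcal{A})$, each leaf is either a constant or an input bit (or its negation, which can be absorbed at the leaves). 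Collapsing runs of same-type gates into single unbounded-fan-in gates, the depth of $C_n$ is bounded by the alternation count $g(n)=O(t(n))$. Uniformity: configurations are $O(\log n)$-bit strings, so the relation "$c_1$ is a successor of $c_2$" and the state-type predicates are first-order definable under Proviso \ref{2.2}, giving a first-order query producing $C_n$ from $0^n$.

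For the converse $\textrm{AC}[t(n)]\subseteq\textrm{ASPACE-ALT}[\log n,t(n)]$, start from a first-order uniform family $\{C_n\}$ of depth $f(n)=O(t(n))$. Normalize each $C_n$ to strict alternation of $\wedge$- and $\vee$-levels (consecutive gates of the same type collapse into one unbounded-fan-in gate, increasing depth by at most a constant factor). Then describe an ATM $M$ that, on input $\mathcal{A}$, holds in $O(\log n)$ space the index of a "current gate" starting at the root of $C_{\|\mathcal{A}\|}$. In an $\wedge$-gate state $M$ is universal and branches over all children; in an $\vee$-gate state $M$ is existential and branches over all children; at a leaf it queries the appropriate input bit and accepts or rejects. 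Because the circuit family is first-order uniform, the successor-child relation and the gate-type predicate are first-order computable, hence computable in logspace by Immerman's theorem cited just before Theorem \ref{2.2.1}; since $t(n)\geq\log n$ this fits in the allowed space. Each traversal step changes level, and levels alternate in type, so $M$ uses $f(n)=O(t(n))$ alternations.

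The main obstacle is the alternation-depth bookkeeping: the naive circuit from an ATM run has depth equal to the \emph{number of steps}, not the number of alternations, so I must argue that collapsing maximal runs of same-polarity states into a single unbounded fan-in gate is both correct and preserves first-order uniformity. This amounts to showing that the relation "configuration $c'$ is reachable from $c$ without changing polarity" is logspace (hence first-order under Proviso \ref{2.2}), which follows because within one polarity block the machine is purely nondeterministic (or purely co-nondeterministic) on $O(\log n)$ space and reachability in the configuration graph is $\textrm{NL}\subseteq\textrm{L}$-definable after appropriate padding; the residual subtlety is verifying that this collapse can be performed \emph{uniformly} in $n$ by the first-order query producing $C_n$. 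Once that is in place, the two simulations chain together and, combined with Theorem \ref{2.2.14}, give the three-way equality.
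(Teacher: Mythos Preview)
The paper does not prove this theorem. It is one of the results that the thesis explicitly announces it will ``exhibit without proof'' (see the sentence introducing Theorem~\ref{2.2.14} and the subsequent theorems in Section~2.2); the statement is attributed to Ruzzo and Tompa with a citation and nothing more. So there is no proof in the paper to compare your proposal against.

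That said, your outline follows the standard two-simulation strategy and is mostly sound, but the place you yourself flag as ``the main obstacle'' is a genuine gap, and your proposed resolution does not work. You need the edge relation of the \emph{collapsed} circuit (``$c'$ is reachable from $c$ without changing polarity'') to be first-order definable, since the paper's $\textrm{AC}[t(n)]$ is explicitly FO-uniform. That relation is directed reachability in a polynomial-size configuration graph, i.e.\ an $\textrm{NL}$-complete predicate. Your appeal to ``$\textrm{NL}\subseteq\textrm{L}$-definable after appropriate padding'' is not a valid step: $\textrm{NL}=\textrm{L}$ is open, and more to the point reachability is provably \emph{not} in $\textrm{FO}=\textrm{AC}^0$, so no amount of padding makes the collapsed edge relation first-order. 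If you leave the circuit uncollapsed, the edge relation is first-order (one-step successor is local), but then the circuit depth is the running time, not the alternation count, and you land in $\textrm{AC}[n^{O(1)}]$ rather than $\textrm{AC}[t(n)]$.

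The usual way out is to avoid the circuit detour for this direction and argue $\textrm{ASPACE-ALT}[\log n,t(n)]\subseteq \textrm{FO}[t(n)]$ directly, letting Theorem~\ref{2.2.14} supply the bridge to $\textrm{AC}[t(n)]$. One then has to show that each same-polarity phase of the ATM can be absorbed into a constant-size quantifier block; this is where the hypothesis $t(n)\geq\log n$ and the transitive-closure machinery behind Theorem~\ref{2.2.14} are actually used. Your reverse inclusion $\textrm{AC}[t(n)]\subseteq\textrm{ASPACE-ALT}[\log n,t(n)]$ is fine as written.
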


\section{Examples}
In this section we give examples to show that, in general, there is no order comparison between the depth of a formula and its inflationary depth, and no order comparison between the depth of the iteration it takes to come to the simultaneous fixed-point of two formulas and the depth of the iteration it takes to come to their nested fixed-point, if the relation variables through which the iteration is made are not restricted to be positive. (Throughout the section $S$ denotes the usual successor).

\begin{example}
Consider the formula $\newline\varphi(X,x) := \newline(\forall y \neg X y \wedge x=min)
 \newline\vee \newline (\exists y(Xy \wedge \forall z(Xz\rightarrow z=y)) \wedge \newline(\exists y (Xy\wedge Syx) \vee (X max \wedge (Sx max \vee x= max))))\newline \vee \newline (\exists y \exists z (Xy \wedge Xz \wedge \forall u (Xu \rightarrow (u=y\vee u=z)) \wedge y<z \wedge\newline ( (y\neq 0 \wedge (x=z \vee Sxy)) \vee (y=0\wedge (Sxz \vee \exists u (Sxu\wedge Suz))))))\newline \vee \newline (X0 \wedge X1 \wedge \forall y (Xy \rightarrow (y=0 \vee y=1)) \wedge (x=0 \vee x=1))$
$\newline\newline$
For any positive integer $n$ and any structure $\mathcal{A}$ of size $n$, the sequence $\emptyset , F^{\varphi,\mathcal{A}}(\emptyset), (F^{\varphi,\mathcal{A}})^2(\emptyset),\ldots$ is as follows :$\newline$
$\emptyset, \{0\},\{1\},\ldots,\{n-1\},\{n-2,n-1\},\{n-3,n-1\},\newline \{n-4,n-1\},\ldots,\{0,n-1\},\{n-3,n-2\},\{n-4,n-2\},\ldots,\newline \{0,n-2\},\{n-4,n-3\},\ldots,\{1,2\},\{0,2\},\{0,1\},\{0,1\},\{0,1\},\ldots$
$\newline\newline$
The fixed-point is $\{0,1\}$ and it comes after $n+(n-1)+(n-2)+\ldots+2+1$ iterations, i.e., after $\frac{n(n+1)}{2}$ iterations.
While for the inflationary fixed-point, which is the fixed-point of the formula $Xx\vee  \varphi(X,x)$, we have the sequence :
$\emptyset, \{0\}, \{0,1\},\{0,1\},\ldots$.
So the fixed-point comes just  after two iterations. Hence $|\varphi|(n) > |\varphi|_{\textrm{IFP}}(n)$; $|\varphi|(n) \in \Theta(n^2)$ while $|\varphi|_{\textrm{IFP}}(n)$ is constant.
\end{example}

\begin{example}
Consider the formula $\newline$
$\psi(X,x) := \newline (\forall y \neg Xy \wedge \forall z (z<x \rightarrow Xz)) \vee (\exists y Xy \wedge x=1) \newline \vee(\exists u \exists v (Xu\wedge Xv \wedge u\neq v) \wedge \forall z (z<x \rightarrow Xz))$ $\newline\newline$
For any positive integer $n$ and any structure $\mathcal{A}$ of size $n$, the sequence $\emptyset , F^{\psi,\mathcal{A}}(\emptyset), (F^{\psi,\mathcal{A}})^2(\emptyset),\ldots$ is as follows :$\newline$
$\emptyset, \{0\},\{1\},\{1\},\{1\},\ldots$ $\newline$
That is the fixed-point of this formula comes just after two iterations, while for its inflationary fixed-point we have the sequence : $\newline\newline$
$\emptyset,\{0\},\{0,1\},\{0,1,2\},\{0,1,2,3\},\ldots,\{0,1,\ldots,n-1\},\{0,1,\ldots,n-1\},\ldots$ $\newline$
That is, its inflationary fixed-point comes after $n$ iterations. Thus $|\psi|(n)=2$ while $|\psi|_{\textrm{IFP}}(n)=n$.

\end{example}

The following example shows that the nested fixed-point of two formulas can actually take more iterations than their simultaneous fixed-point when each one of them is positive in both $X$ and $Y$.

\begin{example} $\newline$
Consider the formulas $\newline$ $\varphi(X,Y,x) := x=0 \vee \exists z (Xz\wedge Szx)$ $\newline$ and $\newline$ $\psi(X,Y,y) := y=0 \vee \exists z (Yz \wedge Szy)$. $\newline\newline$ In any structure of size $n$ the computation of their simultaneous fixed-point goes on as follows : $\newline\newline$
$(\emptyset,\emptyset), (\{0\},\{0\}),(\{0,1\},\{0,1\}),\ldots, (\{0,1,\ldots,n-1\},\{0,1,\ldots,n-1\}),\newline(\{0,1,\ldots,n-1\},\{0,1,\ldots,n-1\}),\ldots$
$\newline\newline$
i.e. it takes $n$ iterations to compute the simultaneous fixed-point.$\newline$
While for the nested fixed-point $[\textrm{LFP}_{X,x}\varphi(X,[\textrm{LFP}_{Y,y}\psi(X,Y,y)],x)]$, First $X$ is interpreted with $\emptyset$ and $n$ iterations take place to compute the fixed-point of $\psi$ before we can evaluate the new interpretation of $X$ which is $\{0\}$. In general, with every interpretation of $X$ it takes $n$ iterations to compute the fixed-point of $\psi$ before we can evaluate the new interpretation of $X$; and it can be easily seen that the interpretations of $X$ grow as follows $\emptyset, \{0\}, \{0,1\},\ldots, \{0,1,\ldots,n-1\}, \{0,1,\ldots,n-1\},\ldots $. Thus it takes a total of $n(n+1)$ iterations to come to the nested fixed-point.
\end{example}

By intuition one expects the nested fixed-point of two formulas to take more iterations than their simultaneous fixed-point if the relation variables through which the iteration is made are positive.$\newline$ \textbf{Open Question :} Are there formulas $\varphi(X,Y,\overline{x})$ and $\psi(X,Y,\overline{y})$ positive in both $X$ and $Y$ such that their simultaneous fixed-point takes ``more" iterations than one of the two nested fixed-points ?$\newline$
If the occurrences of $X$ or $Y$ are not restricted to be positive, we can find such examples :

\begin{example}$\newline$
Let $\newline$ $\varphi(X,Y,x) := \newline x=0 \vee (x=1 \wedge \forall z (z\neq max \rightarrow Yz)) \vee  \exists z (Xz \wedge z\neq 0 \wedge Szx) \newline \vee \newline (X0 \wedge X1 \wedge \forall z Yz \wedge x=x)$ $\newline \newline$ and $\newline \newline$ $\psi(X,Y,y) := \newline  y=0 \vee \exists z (Yz \wedge Szy \wedge y\neq max)  \vee (\exists z Yz \wedge (y=y \rightarrow \neg \exists z Xz)) \newline \vee \newline (X0 \wedge X1 \wedge \neg \exists z Yz \wedge y=y ) \vee (\forall z Yz \wedge y=y)$ $\newline\newline$
$\varphi$ is positive in both $X$ and $Y$, but $\psi$ is not positive in $X$ nor in $Y$. $\newline\newline$
In any structure $\mathcal{A}$ of size $n$ the computation of their simultaneous fixed-point goes on as follows : $\newline$

$(\emptyset,\emptyset), (\{0\},\{0\}), (\{0\},\{0,1\}), (\{0\},\{0,1,2\}), \ldots, (\{0\},\{0,1,\ldots,n-2\}), \newline (\{0,1\},\{0,1,\ldots,n-2\})
, (\{0,1,2\},\{0,1,\ldots,n-2\}), \ldots,\newline(\{0,1,\ldots,n-1\},\{0,1,\ldots,n-2\}),
(\{0,1,\ldots,n-1\},\{0,1,\ldots,n-2\}), \ldots $
$\newline$
i.e. it takes $2n-2$ iterations to come to the simultaneous fixed-point. While for the nested fixed-point $[\textrm{LFP}_{X,x}\varphi(X,[\textrm{LFP}_{Y,y}\psi(X,Y,y)],x)]$, when first $X$ is interpreted with $\emptyset$, the computation of the fixed-point of $\psi(\emptyset,Y,y)$ goes on as follows :
$\emptyset,\{0\},\{0,1,\ldots,n-1\},\{0,1,\ldots,n-1\},\ldots$, that is, its fixed-point comes in just two iterations. Now $X$ has the new interpretation $\{a\in A \; | \; \mathcal{A} \vDash \varphi[\emptyset,\{0,1,\ldots,n-1\},a]\}$ which is $\{0,1\}$. With this interpretation of $X$ the computation of the fixed-point of $\psi(\{0,1\},Y,y)$ goes on as follows : $\emptyset, \{0,1,\ldots,n-1\},\{0,1,\ldots,n-1\},\ldots$, that is, its fixed point comes in just one iteration.
The new interpretation of $X$ should be $\{a\in A \; | \; \mathcal{A} \vDash \varphi[\{0,1\},\{0,1,\ldots,n-1\},a] \}$ which is $\{0,1,\ldots,n-1\}$. With this new interpretation of $X$ the fixed-point of $\psi(\{0,1,\ldots,n-1\},Y,y)$ is $\{0,1,\ldots,n-1\}$; and $\{a\in A \; | \; \mathcal{A} \vDash \varphi[\{0,1,\ldots,n-1\},\{0,1,\ldots,n-1\},a]\}$ is equal to $\{0,1,\ldots,n-1\}$. Hence $\{0,1,\ldots,n-1\}$ is the nested fixed-point and it comes after a total of $(2+1+1+1)$, i.e., 5 iterations.

\end{example}

\begin{example}
 $\newline$
Let $\varphi(X,Y,x) := (\forall u \neg Xu \wedge x=0) \vee (\exists u Xu \wedge x=1)$ $\newline\newline$ and $\newline\newline$ $\psi(X,Y,y) := (\exists u Xu \wedge \forall u \neg Yu \wedge y=y) \vee y=0 \vee \exists z (Yz \wedge Szy)$ $\newline\newline$ $\varphi$ is not positive in $X$, and $\psi$ is not positive in $Y$.
$\newline\newline$ For any structure $\mathcal{A}$ of size $n$ the computation of their simultaneous fixed-point goes on as follows : $\newline$

$(\emptyset,\emptyset), (\{0\},\{0\}), (\{1\},\{0,1\}), (\{1\},\{0,1,2\}), \ldots,\newline (\{1\},\{0,1,\ldots,n-1\}),(\{1\},\{0,1,\ldots,n-1\}), \ldots$ $\newline\newline$
i.e. it takes $n$ iterations to come to the simultaneous fixed-point.
$\newline$
While for the nested fixed-point $[\textrm{LFP}_{Y,y}\psi([\textrm{LFP}_{X,x}\varphi(X,Y,x)],Y,y)]$, when first $Y$ is interpreted with $\emptyset$, the computation of the fixed point of $\varphi(X,\emptyset,x)$ goes on as follows :
$\emptyset,\{0\},\{1\},\{1\},\ldots$, that is, it takes just two iterations to come to its fixed-point. Now $Y$ has the new interpretation $\{a\in A \; | \; \mathcal{A} \vDash \psi[\{1\},\emptyset,a]\}$ which is $\{0,1,\ldots,n-1\}$. With this interpretation of $Y$ the fixed point of $\varphi(X,\{0,1,\ldots,n-1\},x)$ is $\{1\}$; and $\{a\in A \; | \; \mathcal{A} \vDash \psi[\{1\},\{0,1,\ldots,n-1\},a]\}$ is equal to $\{0,1,,\ldots,n-1\}$. Hence $\{0,1,\ldots,n-1\}$ is the nested fixed point and it comes after a total of $(2+1)$, i.e., 3 iterations.
\end{example}

One more example to show that the nested fixed-point can take actually more iterations than the simultaneous fixed-point; but here the occurrences of the relation variables are not restricted to be positive.

\begin{example}
$\newline$
Let $\newline$ $\varphi(X,Y,x) := \newline(\exists u Yu \wedge\newline \exists z (\forall u (Yu \rightarrow z\leq u) \wedge (\forall v (\forall u(Yu \rightarrow v\leq u)\rightarrow v\leq z)) \wedge (x=z \vee Sxz))) \newline\vee \newline \exists z (Xz\wedge Sxz) \vee x=0$ $\newline\newline$ and $\newline\newline$ $\psi(X,Y,y) := \newline\small{\exists z (\forall u (Xu \rightarrow u<z) \wedge (\forall v (\forall u (Xu \rightarrow u <v) \rightarrow z \leq v)) \wedge y=z)}  \vee \newline \exists z (Yz \wedge Szy) \vee Yy$.
$\newline\newline$
For any structure $\mathcal{A}$ of size $n$ the computation of their simultaneous fixed-point goes on as follows : $\newline$
$(\emptyset,\emptyset), (\{0\},\{0\}),
(\{0\},\{0,1\}), (\{0\},\{0,1,2\}),\newline (\{0\},\{0,1,\ldots,n-1\}),(\{0\},\{0,1,\ldots,n-1\}),\ldots $
$\newline$
i.e. it takes $n$ iterations to come to the simultaneous fixed-point. While for the nested fixed-point $[\textrm{LFP}_{X,x}\varphi(X,[\textrm{LFP}_{Y,y}\psi(X,Y,y)],x)]$, when first $X$ is interpreted with $\emptyset$, the computation of the fixed-point of $\psi(\emptyset,Y,y)$ goes on as follows :$\newline$
$\emptyset,\{0\},\{0,1\},\{0,1,2\},\ldots,\{0,1,\ldots,n-1\},\{0,1,\ldots,n-1\},\ldots$ $\newline$ that is $n$ iterations to come to the fixed-point. The new interpretation of $X$ is $\{a\in A \; | \; \mathcal{A} \vDash \varphi[\emptyset,\{0,1,\ldots,n-1\},a]\}$ which is $\{0\}$. With this new interpretation of $X$, the computation of the fixed-point of $\psi(\{0\},Y,y)$ goes on as follows : $\newline$ $\emptyset, \{1\},\{1,2\},\{1,2,3\},\ldots,\{1,2,\ldots,n-1\},\{1,2,\ldots,n-1\},\ldots$ $\newline$ that is $n-1$ iterations to come to the fixed-point; and then the new interpretation of $X$ is $\{a\in A \; |\; \mathcal{A} \vDash \varphi[\{0\},\{1,2,\ldots,n-1\},a]\}$ which is $\{0,1\}$.
$\newline$
In general the interpretations of $X$ grow as follows $\newline$ $\emptyset, \{0\},\{0,1\},\{0,1,2\},\ldots,\{0,1,\ldots,n-1\},\{0,1,\ldots,n-1\},\ldots$, $\newline$ and $\{0,1,\ldots,n-1\}$ is the nested fixed point, while for every one of these interpretations before the fixed-point, say $\{0,1,\ldots,m\}$ with $m<n-1$, the computation of the fixed-point of $\psi(\{0,1,\ldots,m\},Y,y)$ goes on as follows $\newline$
$\emptyset,\{m+1\},\{m+1,m+2\},\{m+1,m+2,m+3\},\ldots,\newline\{m+1,m+2,\ldots,n-1\},\{m+1,m+2,\ldots,n-1\},\ldots$ $\newline\newline$ i.e. $n-(m+1)$ iterations to come to the fixed-point. Hence the total number of iterations to come to the nested fixed-point is $\newline$ $n+1+(n-1)+1+(n-2)+1+\ldots+2+1+1+1$ i.e. $\frac{n(n+1)}{2}+n$.
\end{example}

\chapter{Finite Variable Logics}
We have seen in the previous chapter that parallel-time and inductive depth are equal and are equal to the depth of iteration of a quantifier block (with each iteration the quantifier rank of the resulting formula increases). We have also seen that the number of variables in an inductive definition is related in some way to the number of processors in the corresponding $\textrm{CRAM}$. Accordingly, it is important to study the expressive power of number of variables and quantifier rank. This takes us to Finite Variable Logics.

\section{Pebble Games}
In this section we introduce the basics of finite variable logics and their respective games (pebble games). This section is mainly from \cite{Flum}.
\begin{definition}
$\textrm{FO}^k$ is the fragment of first-order logic of formulas that use at most $k$ distinct variables (free and bound), and $\textrm{FO}[k,n]$ is the fragment of first-order logic of formulas of quantifier rank at most $n$ that use at most $k$ distinct variables (free and bound).
\end{definition}

\begin{definition} (Pebble Games) $\newline$
Fix a vocabulary $\tau$ and let $*$ be a symbol that does not belong to the universe of any structure. For $\overline{a}\in (A\cup\{*\})^s$, $\overline{a} = a_1 \ldots a_s$, let $\newline$ $supp(\overline{a}) : = \{i \; | \; a_i \in A\}$ be the \emph{support} of $\overline{a}$, and if $a\in A$, let $\overline{a} \frac{a}{i}$ denote $a_1\ldots a_{i-1}a\; a_{i+1}\ldots a_s$. For $\overline{a}\in (A\cup\{*\})^s$ and $\overline{b}\in (B\cup\{*\})^s$ we say that $\overline{a}\mapsto\overline{b}$ is an $s$-\emph{partial isomorphism} from $\mathcal{A}$ to $\mathcal{B}$ if $supp(\overline{a}) = supp(\overline{b})$ and $\overline{a}^{\prime} \mapsto \overline{b}^{\prime}$ is a partial isomorphism from $\mathcal{A}$ to $\mathcal{B}$, where $\overline{a}^{\prime}$ and $\overline{b}^{\prime}$ are the subsequences of $\overline{a}$ and $\overline{b}$ with indices in the support.
$\newline$ Let $\mathcal{A}$ and $\mathcal{B}$ be structures, $\overline{a}\in (A\cup\{*\})^s$, $\overline{b}\in (B\cup\{*\})^s$ with $supp(\overline{a}) = supp(\overline{b})$. In the \emph{pebble game} $G_m^s(\mathcal{A},\overline{a},\mathcal{B},\overline{b})$ we have $s$ pebbles $\alpha_1,\ldots ,\alpha_s$ for $\mathcal{A}$ and $s$ pebbles $\beta_1,\ldots ,\beta_s$ for $\mathcal{B}$. Initially, $\alpha_i$ is placed on $a_i$ if $a_i\in A$, and off the board if $a_i=*$, and similarly for $B$, $\beta_i$ and $b_i$.
$\newline$
 A play of this game consists of $m$ moves. In its $j$-th move, Spoiler selects a structure $\mathcal{A}$ or $\mathcal{B}$, and a pebble for this structure (being off the board or already placed on an element). If it selects $\mathcal{A}$ and $\alpha_i$, it places $\alpha_i$ on some element of $\mathcal{A}$, and then Duplicator places $\beta_i$ on some element of $\mathcal{B}$. If Spoiler selects $\mathcal{B}$ and $\beta_i$, it places $\beta_i$ on an element of $\mathcal{B}$ and Duplicator places $\alpha_i$ on some element of $\mathcal{A}$. (Note that there may be several pebbles on the same element).$\newline$
Duplicator wins the game if for each $j\leq m$ we have that $\overline{e} \mapsto \overline{f}$ is an $s$-partial isomorphism, where $\overline{e}= e_1 \ldots e_s$ are the elements marked by $\alpha_1,\ldots,\alpha_s$ after the $j$-th move ( $e_i = *$ in case $\alpha_i$ is off the board ) and where $\overline{f} = f_1 \ldots f_s$ are the corresponding values given by $\beta_1,\ldots,\beta_s$. For $j=0$ this means $\overline{a} \mapsto \overline{b}$ is an $s$-partial isomorphism. The pebble game $G^s_{\infty}(\overline{A},\overline{a},\overline{B},\overline{b})$ with infinitely many moves is defined similarly. We use $G_m^s(\mathcal{A},\mathcal{B})$ as abbreviation for $G_m^s(\mathcal{A},*\ldots*,\mathcal{B},*\ldots*)$ and $G_{\infty}^s(\mathcal{A},\mathcal{B})$ for $G_{\infty}^s(\mathcal{A},*\ldots*,\mathcal{B},*\ldots*)$.
\end{definition}

\begin{example}
Consider the orderings $\mathcal{A} :=(\{a,b\},<)$ and $\newline\mathcal{B}:=(\{c,d,e\},<)$ where $a<b$ and $c<d<e$. Spoiler has a winning strategy in $G_3^3(\mathcal{A},\mathcal{B})$. Spoiler first pebbles $c$ in $\mathcal{B}$ and Duplicator has to pebble $a$ in $\mathcal{A}$ with the corresponding pebble, because if it pebbles $b$ instead, it will not be able to reply in the second move when Spoiler pebbles something greater than $c$ with a second pebble. In the second move Spoiler pebbles $e$ in $\mathcal{B}$ with a second pebble and Duplicator has to pebble $b$ in $\mathcal{A}$ with the corresponding pebble. In the third move Spoiler pebbles $d$ in $\mathcal{B}$ with the third pebble and Duplicator cannot reply because there is no element between $a$ and $b$ in $\mathcal{A}$ corresponding to the element $d$ between $c$ and $e$ in $\mathcal{B}$.
\end{example}

\begin{definition}
$\newline$
 Structures $\mathcal{A}$ and $\mathcal{B}$ are $s$-$m$-isomorphic, in symbols $\mathcal{A} \cong_m^s \mathcal{B}$, if there is a sequence  $(I_j)_{j\leq m}$ of nonempty sets of $s$-partial isomorphisms with the following properties :
 \begin{enumerate}
\item[] ( $s$-forth property ) For $j < m$, $\overline{a} \mapsto \overline{b} \in I_{j+1}$, $1 \leq i \leq s$, and $a \in A$, there is $b\in B$ such that $\overline{a} \frac{a}{i} \mapsto \overline{b}\frac{b}{i} \in I_j$.
\item[] ( $s$-back property ) For $j < m$, $\overline{a} \mapsto \overline{b} \in I_{j+1}$, $1 \leq i \leq s$, and $b \in B$, there is $a\in A$ such that $\overline{a} \frac{a}{i} \mapsto \overline{b}\frac{b}{i} \in I_j$.
 \end{enumerate}
 We then write $(I_j)_{j\leq m} : \mathcal{A} \cong_m^s \mathcal{B}$.
\end{definition}

 \begin{definition}
 Structures $\mathcal{A}$ and $\mathcal{B}$ are $s$-partially isomorphic, in symbols $\mathcal{A} \cong_{part}^s \mathcal{B}$, iff there is a nonempty set $I$ of $s$-partial isomorphisms with the forth and back properties, respectively :
 \begin{enumerate}
 \item[] For $\overline{a} \mapsto \overline{b} \in I$, $1 \leq i \leq s$, and $a \in A$, there is $b\in B$ such that $\overline{a} \frac{a}{i} \mapsto \overline{b}\frac{b}{i} \in I$.
 \item[] For $\overline{a} \mapsto \overline{b} \in I$, $1 \leq i \leq s$, and $b \in B$, there is $a\in A$ such that $\overline{a} \frac{a}{i} \mapsto \overline{b}\frac{b}{i} \in I$.
 \end{enumerate}
 We then write $I: \mathcal{A} \cong_{part}^s \mathcal{B}$.
 \end{definition}
 \begin{definition}
 For $m\in \mathbb{N}$, any structure $\mathcal{A}$, and $\overline{a} \in (A\cup\{*\}^s)$, the $s$-$m$-\emph{isomorphism type} $\psi_{\overline{a}}^m$ ( $=$ $^s\psi_{\mathcal{A},\overline{a}}^m$ ) of $\overline{a}$ in $\mathcal{A}$ is given by :$\\$
  $$\psi_{\overline{a}}^0 (\overline{v}) := \bigwedge \{\psi\; |\; \psi \;\; \text{atomic or negated atomic, and} \;\; \mathcal{A} \vDash \psi[\overline{a}] \},$$
  $$\\$$
  $$\psi_{\overline{a}}^{m+1}(\overline{v}) := \psi_{\overline{a}}^0 \wedge \underset{1\leq i \leq s}{\bigwedge} (\underset{a\in A}{\bigwedge} \exists v_i \psi^m_{\overline{a}\frac{a}{i}} \wedge \forall v_i \bigvee_{a\in A} \psi^m_{\overline{a}\frac{a}{i}}).$$
  $\\$
  In particular, $\psi_{\mathcal{A}}^m := \psi^m_{* \ldots *}$ is an $\textrm{FO}^s$-sentence of quantifier rank $m$.
 \end{definition}
 \begin{definition}
 The sets $W_m^s(\mathcal{A},\mathcal{B})$ and $W_{\infty}^s(\mathcal{A},\mathcal{B})$ of $s$-partial isomorphisms corresponding to winning positions of Duplicator in the respective games are defined as follows :
  $$ W_m^s(\mathcal{A},\mathcal{B}) := \{\overline{a} \mapsto \overline{b} \;| \;\; \text{Duplicator wins} \;\; G_m^s(\mathcal{A},\overline{a},\mathcal{B},\overline{b})\}, \text{and} $$

  $$ W_{\infty}^s(\mathcal{A},\mathcal{B}) := \{\overline{a} \mapsto \overline{b} \;| \;\; \text{Duplicator wins} \;\; G_{\infty}^s(\mathcal{A},\overline{a},\mathcal{B},\overline{b})\} $$
 \end{definition}

We write $\mathcal{A}\equiv_m^s \mathcal{B}$ to express that $\mathcal{A}$ and $\mathcal{B}$ satisfy the same $\textrm{FO}^s$-sentences of quantifier rank $\leq m$.

The following theorem shows that the logics and the games fit together.
When we write $\mathcal{A}\vDash \varphi[\overline{a}]$ for $\overline{a}\in (A\cup\{*\})^s$ we tacitly assume that the free variables of $\varphi$ have indices in $supp(\overline{a})$ (that is, $i\in supp(\overline{a})$ whenever $v_i\in free(\varphi)$).

\begin{theorem} \cite{Flum}$\newline$
Let $\mathcal{A}$ and $\mathcal{B}$ be structures, and let $\overline{a}\in (A\cup\{*\})^s$ and $\overline{b}\in (B\cup\{*\})^s$ with $supp(\overline{a}) = supp(\overline{b})$. Then the following are equivalent :
              \begin{enumerate}
                    \item[(i)] Duplicator wins $G_m^s(\overline{A},\overline{a},\overline{B},\overline{b})$.
                    \item[(ii)] $\overline{a} \mapsto \overline{b} \in W_m^s(\mathcal{A},\mathcal{B})$ and $(W_j^s(\mathcal{A},\mathcal{B}))_{j\leq m} : \mathcal{A} \cong_m^s \mathcal{B}$.
                    \item[(iii)] There is $(I_j)_{j\leq m}$ with $\overline{a} \mapsto \overline{b} \in I_m$ such that $(I_j)_{j \leq m} : \mathcal{A} \cong_m^s \mathcal{B}$.
                    \item[(iv)] $\mathcal{B} \vDash \psi_{\overline{a}}^m[\overline{b}]$.
                    \item[(v)] $\overline{a}$ satisfies in $\mathcal{A}$ the same $\textrm{FO}^s$-formulas of quantifier rank $\leq m$ as $\overline{b}$ in $\mathcal{B}$.
              \end{enumerate}
\end{theorem}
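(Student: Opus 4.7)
The plan is to establish the cycle (i) $\Rightarrow$ (ii) $\Rightarrow$ (iii) $\Rightarrow$ (iv) $\Rightarrow$ (v) $\Rightarrow$ (i), so that the five statements collapse into one. Several of these implications are close to definitional; the substantive content sits in (iii) $\Rightarrow$ (iv) and (v) $\Rightarrow$ (iii), both of which proceed by induction on $m$.

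First I would dispatch (i) $\Leftrightarrow$ (ii) directly from the definition of $W_m^s(\mathcal{A},\mathcal{B})$. The membership $\overline{a}\mapsto\overline{b}\in W_m^s(\mathcal{A},\mathcal{B})$ is literally the statement (i), so the only real work is verifying that the family $(W_j^s(\mathcal{A},\mathcal{B}))_{j\le m}$ enjoys the $s$-back and $s$-forth properties: given a winning position for $m{+}1$ moves and any pebble index $i$ with a move $a\in A$ by Spoiler, Duplicator has a reply $b\in B$ so that the resulting position still leaves a winning strategy for the remaining $m$ moves, which is exactly $\overline{a}\tfrac{a}{i}\mapsto\overline{b}\tfrac{b}{i}\in W_m^s(\mathcal{A},\mathcal{B})$, and symmetrically for the back direction. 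Then (ii) $\Rightarrow$ (iii) is immediate: take $I_j:=W_j^s(\mathcal{A},\mathcal{B})$.

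Next I would prove (iii) $\Rightarrow$ (iv) by induction on $m$. For $m=0$ the hypothesis that $\overline{a}\mapsto\overline{b}$ is an $s$-partial isomorphism immediately gives $\mathcal{B}\models\psi^0_{\overline{a}}[\overline{b}]$, because $\psi^0_{\overline{a}}$ is the conjunction of the atomic and negated atomic facts about $\overline{a}$ in $\mathcal{A}$. For the step, assume $(I_j)_{j\le m+1}:\mathcal{A}\cong^s_{m+1}\mathcal{B}$ with $\overline{a}\mapsto\overline{b}\in I_{m+1}$. The conjunct $\psi^0_{\overline{a}}$ holds by the $m=0$ case. For each index $i$ and each $a\in A$, the $s$-forth property yields some $b\in B$ with $\overline{a}\tfrac{a}{i}\mapsto\overline{b}\tfrac{b}{i}\in I_m$, so the induction hypothesis gives $\mathcal{B}\models\psi^m_{\overline{a}\frac{a}{i}}[\overline{b}\tfrac{b}{i}]$, furnishing the existential conjunct $\exists v_i\psi^m_{\overline{a}\frac{a}{i}}$. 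For the universal conjunct $\forall v_i\bigvee_{a\in A}\psi^m_{\overline{a}\frac{a}{i}}$, pick any $b\in B$; the $s$-back property gives some $a\in A$ with $\overline{a}\tfrac{a}{i}\mapsto\overline{b}\tfrac{b}{i}\in I_m$, and induction again gives $\mathcal{B}\models\psi^m_{\overline{a}\frac{a}{i}}[\overline{b}\tfrac{b}{i}]$. This is the main technical step; the delicate point is that $\psi^{m+1}_{\overline{a}}$ is a \emph{finite} formula precisely because $A$ is finite, so the conjunction and disjunction over $A$ are legitimate $\textrm{FO}$-formulas.

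The implication (iv) $\Rightarrow$ (v) reduces to showing that for every $\textrm{FO}^s$-formula $\varphi(\overline{v})$ of quantifier rank $\le m$, either $\psi^m_{\overline{a}}\models\varphi$ or $\psi^m_{\overline{a}}\models\neg\varphi$ (where $\overline{a}$ determines which); this again goes by induction on $m$ and on the structure of $\varphi$, with the quantifier case handled by the existential/universal conjuncts in $\psi^{m+1}_{\overline{a}}$. Since $\mathcal{A}\models\psi^m_{\overline{a}}[\overline{a}]$ by construction and $\mathcal{B}\models\psi^m_{\overline{a}}[\overline{b}]$ by (iv), the two tuples satisfy the same $\textrm{FO}^s$-formulas of quantifier rank $\le m$. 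Finally, for (v) $\Rightarrow$ (i) (closing the cycle), I would define directly the Duplicator strategy by setting
\[
I_j:=\{\overline{a}'\mapsto\overline{b}'\mid \overline{a}'\in(A\cup\{*\})^s,\ \overline{b}'\in(B\cup\{*\})^s,\ \text{same }\textrm{FO}^s\text{-formulas of qr}\le j\},
\]
showing that $(I_j)_{j\le m}:\mathcal{A}\cong^s_m\mathcal{B}$ with $\overline{a}\mapsto\overline{b}\in I_m$: the back and forth properties follow from the fact that if $\overline{a}'$ and $\overline{b}'$ agree on $\textrm{FO}^s$-formulas of quantifier rank $\le j+1$ then, given $i$ and $a\in A$, the formula $\exists v_i\varphi$ (where $\varphi$ is the $j$-type of $\overline{a}'\tfrac{a}{i}$, recast in only $s$ variables by reusing $v_i$) forces the existence of the required $b\in B$. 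Given such an $I_j$-sequence, Duplicator plays move by move maintaining the invariant that the current position lies in $I_{m-k}$ after the $k$-th move, which wins $G^s_m(\mathcal{A},\overline{a},\mathcal{B},\overline{b})$. The hardest part of the whole argument is the careful bookkeeping in (iii) $\Rightarrow$ (iv): one must respect that $\textrm{FO}^s$ allows reuse of variables, so the quantifier $\exists v_i$ in $\psi^{m+1}_{\overline{a}}$ overwrites the $i$-th pebble position, perfectly mirroring the pebble mechanics of the game.
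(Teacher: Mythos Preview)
The paper does not supply its own proof of this theorem; it is stated with a citation to \cite{Flum} and immediately followed by the corollary, so there is nothing in the text to compare against line by line. That said, your cycle (i) $\Rightarrow$ (ii) $\Rightarrow$ (iii) $\Rightarrow$ (iv) $\Rightarrow$ (v) $\Rightarrow$ (i) is the standard argument one finds in Ebbinghaus--Flum, and each step is handled correctly: the (i)$\Leftrightarrow$(ii) part is definitional once you observe $W_m^s\subseteq W_{m-1}^s\subseteq\cdots$ so nonemptiness propagates downward; the induction in (iii) $\Rightarrow$ (iv) is exactly how the isomorphism-type formulas $\psi^m_{\overline{a}}$ are designed to mirror back-and-forth; and your (v) $\Rightarrow$ (i) via the canonical $I_j$ of ``same $\textrm{FO}^s$-theory up to rank $j$'' is the right closure. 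Your remark that the finiteness of $A$ is what keeps $\psi^{m+1}_{\overline{a}}$ a genuine first-order formula is also the correct point to flag.
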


\begin{corollary}\cite{Flum}
            \begin{enumerate}
                    \item[(i)] Duplicator wins $G_m^s(\mathcal{A},\mathcal{B})$.
                    \item[(ii)] $(W_j^s(\mathcal{A},\mathcal{B}))_{j\leq m} : \mathcal{A} \cong_m^s \mathcal{B}$.
                    \item[(iii)] $\mathcal{A} \cong_m^s \mathcal{B}$.
                    \item[(iv)] $\mathcal{B} \vDash \psi_{\mathcal{A}}^m$.
                    \item[(v)]  $\mathcal{A} \equiv_m^s \mathcal{B}$.
               \end{enumerate}
\end{corollary}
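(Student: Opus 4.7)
The plan is to derive the corollary by specializing the preceding theorem to the empty starting configuration $\overline{a}=\overline{b}=*\cdots *$, which is a valid $s$-partial isomorphism (empty support), and $G^s_m(\mathcal{A},\mathcal{B})$ and $\psi^m_{\mathcal{A}}$ are by definition exactly the instances of $G^s_m(\mathcal{A},\overline{a},\mathcal{B},\overline{b})$ and $\psi^m_{\overline{a}}$ at this pair.

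First I would handle the easy direction. The equivalences (i)$\Leftrightarrow$(ii)$\Leftrightarrow$(iv)$\Leftrightarrow$(v) follow immediately by quoting parts (i), (ii), (iv), (v) of the preceding theorem at $\overline{a}=\overline{b}=*\cdots *$; note that the theorem's (ii) says $*\cdots*\mapsto *\cdots*\in W_m^s(\mathcal{A},\mathcal{B})$ together with $(W_j^s)_{j\leq m}:\mathcal{A}\cong^s_m\mathcal{B}$, the first conjunct being automatic once Duplicator wins. The implication (ii)$\Rightarrow$(iii) is immediate from the definition of $\cong^s_m$.

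The main obstacle is the reverse implication (iii)$\Rightarrow$ anything else: the corollary's (iii) merely asserts existence of some chain $(I_j)_{j\leq m}$ witnessing $\mathcal{A}\cong^s_m\mathcal{B}$, whereas the theorem's (iii) additionally requires $*\cdots*\mapsto *\cdots*\in I_m$. To bridge this, I would enlarge any given witness $(I_j)$ to $(I'_j)$ by closing each $I_j$ under \emph{removal of pebbles}: put $I'_j := \{\overline{c}|_T\mapsto \overline{d}|_T \mid \overline{c}\mapsto \overline{d}\in I_j,\ T\subseteq supp(\overline{c})\}$, where $\overline{c}|_T$ denotes the tuple obtained by replacing the coordinates outside $T$ by $*$. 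Taking $T=\emptyset$ yields $*\cdots*\mapsto *\cdots*\in I'_j$ for every $j\leq m$, so in particular for $j=m$.

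The verification that $(I'_j)$ still has the forth and back properties is where the real (minor) work lies, and I expect it to be the only non-trivial step. Given $\overline{c}|_T\mapsto \overline{d}|_T\in I'_{j+1}$, an index $i\in\{1,\ldots,s\}$, and $a\in A$, apply the forth property of $(I_j)$ to $\overline{c}\mapsto \overline{d}\in I_{j+1}$ to obtain $b\in B$ with $\overline{c}\frac{a}{i}\mapsto \overline{d}\frac{b}{i}\in I_j$. If $i\in T$, restrict the resulting pair to $T$; if $i\notin T$, restrict to $T\cup\{i\}$. A direct computation verifies that $(\overline{c}\frac{a}{i})|_T = (\overline{c}|_T)\frac{a}{i}$ in the first case and $(\overline{c}\frac{a}{i})|_{T\cup\{i\}} = (\overline{c}|_T)\frac{a}{i}$ in the second, so in both cases the resulting element of $I'_j$ has the required form. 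The back property is symmetric. With $(I'_j)$ in hand, the theorem's (iii)$\Rightarrow$(i),(iv),(v) applied at $*\cdots*\mapsto *\cdots*\in I'_m$ closes the cycle.
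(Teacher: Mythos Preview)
The paper does not supply its own proof of this corollary; it is simply stated as an immediate consequence of the preceding theorem (and cited from \cite{Flum}). Your proposal is correct and is exactly the intended specialization to $\overline{a}=\overline{b}=*\cdots *$.

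In fact you do more than the paper: you notice, and correctly handle, the one point that is not entirely automatic, namely that the corollary's (iii) only asserts the existence of \emph{some} witnessing chain $(I_j)_{j\le m}$, which need not contain $*\cdots*\mapsto *\cdots*$ in $I_m$, whereas the theorem's (iii) requires this. Your closure-under-pebble-removal construction $(I'_j)$ is the standard fix, and your verification of the forth and back properties is fine. The paper (and, typically, the source \cite{Flum}) leaves this step implicit.
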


From the previous theorem we deduce the following technique for proving non-expressibility in $\textrm{FO}[s,m]$ :

\begin{remark}
 When we want to prove that some property is not expressible in $\textrm{FO}[s,m]$ we may prove this by proving that there is a structure $\mathcal{A}$ satisfying the property and a structure $\mathcal{B}$ not satisfying the property such that Duplicator wins $G_m^s(\mathcal{A},\mathcal{B})$. We can also prove that there is a sentence in the logic $\textrm{FO}[k,n]$ not equivalent to any sentence in the logic $\textrm{FO}[s,m]$ by proving that there are two structures $\mathcal{A}$ and $\mathcal{B}$ such that Spoiler wins $G_n^k(\mathcal{A},\mathcal{B})$ while Duplicator wins $G_m^s(\mathcal{A},\mathcal{B})$.
\end{remark}

\section{Quantifier Rank and Number of Variables}
In this section we investigate the inclusion and strict inclusion relations among the logics $\textrm{FO}[k,n]$.

\begin{proposition} For every $n\geq 0$,
$$\textrm{FO}[1,n] \subsetneq \textrm{FO}[2,n] \subsetneq \textrm{FO}[3,n] \subsetneq \ldots$$
\end{proposition}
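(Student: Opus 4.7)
For each $k \geq 1$ and every $n \geq 0$ I would exhibit a single witness formula in $\textrm{FO}[k+1, n]$ that is provably not equivalent to any formula in $\textrm{FO}[k, n]$. The strategy is to choose a formula of quantifier rank $0$ whose truth value depends essentially on each of its $k+1$ free variables, since any formula using at most $k$ distinct variable names --- and hence depending on at most $k$ coordinates of an assignment --- cannot then be equivalent to it, no matter how large its quantifier rank is. This sidesteps the pebble game entirely and works uniformly in $n$, which is important because for small $n$ relative to $k$ one cannot simply use sentences whose quantifier rank forces the $(k+1)$-th variable to appear.

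\textbf{The separating formula.} I would take the quantifier-free formula
$$\varphi_k(v_1,\ldots,v_{k+1}) \;:=\; \bigwedge_{1\leq i<j\leq k+1} v_i \neq v_j,$$
which uses exactly the $k+1$ distinct variable names $v_1,\ldots,v_{k+1}$ and only the equality symbol, so $\varphi_k \in \textrm{FO}[k+1,0] \subseteq \textrm{FO}[k+1,n]$ for every $n \geq 0$. To verify essential dependence in each coordinate $i$, fix any structure $\mathcal{A}$ with $\|\mathcal{A}\| \geq k+1$ and an assignment $\alpha$ sending $v_1,\ldots,v_{k+1}$ to pairwise distinct elements $a_1,\ldots,a_{k+1}$; then $\mathcal{A}\vDash\varphi_k[\alpha]$, whereas $\mathcal{A}\not\vDash \varphi_k[\alpha\frac{a_j}{v_i}]$ for any $j\neq i$, showing that altering only the $i$-th value can falsify $\varphi_k$.

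\textbf{Separation and obstacle.} Suppose, for contradiction, $\psi\in\textrm{FO}[k,n]$ is equivalent to $\varphi_k$. Because $\psi$ uses at most $k$ distinct variable names, at least one of $v_1,\ldots,v_{k+1}$ --- say $v_\ell$ --- does not occur in $\psi$, so $\mathcal{A}\vDash\psi[\alpha]$ is insensitive to $\alpha(v_\ell)$. But the previous paragraph exhibited assignments $\alpha$ and $\alpha\frac{a_j}{v_\ell}$ giving different truth values for $\varphi_k$, contradicting equivalence; hence $\varphi_k\notin \textrm{FO}[k,n]$, and the strict inclusion holds for every $k \geq 1$ and every $n \geq 0$. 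The principal conceptual hurdle is fixing the convention for equivalence when the candidate formulas have different free-variable sets: once equivalence is read as identical truth value on every structure under every assignment --- so that a variable not occurring in a formula behaves as a semantic dummy --- the argument is immediate. If the proposition is read instead as a statement about sentences, the same idea can be recovered for $n \geq k+1$ by existentially closing $\varphi_k$ and invoking the pebble-game corollary on pure-equality structures of sizes $k+1$ and $k$, where Duplicator trivially wins $G_n^k$ since $k$ pebbles fit perfectly into the smaller structure.
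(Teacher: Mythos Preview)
Your proof is correct and is essentially the same as the paper's: both use the quantifier-free ``all pairwise distinct'' formula $\bigwedge_{1\leq i<j\leq k+1} v_i\neq v_j$ as the separating witness and argue that any $k$-variable formula must be insensitive to at least one of the $k+1$ coordinates. Your handling of \emph{which} variable is missing from the candidate $\psi$ is slightly more careful than the paper's (which tacitly assumes $\psi$ uses $v_1,\ldots,v_k$), but the idea is identical.
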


\begin{proof}
The formula $\varphi := \underset{1\leq i < j \leq k+1}\bigwedge (v_i \neq v_j)$ belongs to $\textrm{FO}[k+1,n]$ for every $n$ but is not equivalent to a formula in $\textrm{FO}[k,n]$ for any $n$. A formula in $\textrm{FO}[k,n]$ can have at most $k$ variables and thus, at most $k$ free variables, and so for any satisfiable formula in $\textrm{FO}[k,n]$ there is a first-order assignment satisfying it that gives the same value to $v_k$ and $v_{k+1}$. This assignment, of course, does not satisfy $\varphi$. Thus $\varphi$ is not equivalent to any satisfiable formula in $\textrm{FO}[k,n]$. Moreover, since $\varphi$ is satisfiable, it is not equivalent to any contradictory formula. Hence $\varphi$ is not equivalent to any formula in $\textrm{FO}[k,n]$. (Note that this also proves that for $k^{\prime} > k$ and $n > n^{\prime}$, $\textrm{FO}[k^{\prime},n^{\prime}] \nsubseteq \textrm{FO}[k,n]$, and in general, $\textrm{FO}^1 \subsetneq \textrm{FO}^2 \subsetneq \ldots$ )
\end{proof}

\begin{proposition}For every $n\geq 0$ and every $k\geq 3$,
$$\textrm{FO}[k,n]\subsetneq \textrm{FO}[k,n+1]$$
(This is a rephrasing of Proposition 6.15. in Immerman's "Descriptive Complexity" \cite{Immerman})
\end{proposition}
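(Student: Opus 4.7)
The plan is to invoke the pebble game characterization recorded just above: by the corollary to the pebble game theorem, $\mathcal{A}\equiv_m^k\mathcal{B}$ iff Duplicator wins $G_m^k(\mathcal{A},\mathcal{B})$. Hence it suffices to exhibit, for each $n\geq 0$, structures $\mathcal{A}_n,\mathcal{B}_n$ with $\mathcal{A}_n\equiv_n^k\mathcal{B}_n$ but $\mathcal{A}_n\not\equiv_{n+1}^k\mathcal{B}_n$; the $\textrm{FO}^k$-sentence $\psi^{n+1}_{\mathcal{A}_n}$ of quantifier rank $n+1$ defined earlier in the chapter then separates the two structures and witnesses $\textrm{FO}[k,n]\subsetneq\textrm{FO}[k,n+1]$.

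For the witnesses I would mimic Example 1.2.13 and take $\mathcal{A}_n$ and $\mathcal{B}_n$ to be linear orderings of sizes $M_n$ and $M_n+1$, for a threshold $M_n$ growing roughly like $(k-1)^{n+1}$. The intuition is that with $k$ pebbles on the board Spoiler can split any currently ambiguous interval into at most $k-1$ pieces per move, so distinguishing information accumulates by a factor of $k-1$ per round rather than by the factor of $2$ in the classical Ehrenfeucht game. For the Duplicator direction one adapts the inductive family $(I_j)_{j\leq n}$ of Example 1.2.13 by letting $I_j$ consist of those $k$-partial isomorphisms which preserve the truncated distance $d_j$ with threshold $(k-1)^{j+1}$, i.e.\ $d_j(a,a'):=d(a,a')$ when $d(a,a')<(k-1)^{j+1}$ and $d_j(a,a'):=\infty$ otherwise. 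Verifying the $s$-forth and $s$-back properties of $(I_j)_{j\leq n}$ then proceeds exactly as in Example 1.2.13: a gap marked $\infty$ at level $j+1$ is, by the choice of $M_n$, still wide enough---after subtracting the at most $k-1$ other pebbled positions---to host a matching reply on either side. For the Spoiler direction I would exhibit an explicit $(k-1)$-ary search strategy: anchor two pebbles at the two endpoints (using $k\geq 3$ to keep at least one pebble mobile), and in each subsequent move reuse a free pebble to roughly equally subdivide the currently largest ambiguous interval, shrinking its length by a factor of $k-1$. After $n+1$ such refinements some gap on the smaller ordering collapses to length $0$ while the corresponding gap on the larger ordering still contains an unused element, so Duplicator is forced to break the partial isomorphism on the final response.

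The principal technical obstacle is calibrating $M_n$ so that the two bounds just barely miss each other across the transition from $n$ to $n+1$; this may require a slightly more careful count than the clean ``$(k-1)^{n+1}$'' estimate, or a variant in which the two boundary pebbles are themselves repositioned during the search. The hypothesis $k\geq 3$ enters essentially in the Spoiler argument, since the geometric-shrinkage step requires $k-1\geq 2$; with $k=2$ pebbles every move simultaneously surrenders one previously held piece of information, breaking the clean induction and necessitating a different construction. Once both game claims are in place, the proposition follows at once from the game characterization.
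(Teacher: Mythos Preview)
Your approach differs from the paper's in two essential ways, and one of your technical claims is not right.

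First, the paper does not play the pebble game in both directions. For the upper bound it exhibits an \emph{explicit} $\textrm{FO}[3,n+1]$-formula rather than invoking $\psi^{n+1}_{\mathcal{A}_n}$: working over directed graphs with two constants $s,t$, it sets $\varphi_0(x,y):=Exy$ and $\varphi_{i+1}(x,y):=\exists z(\varphi_i(x,z)\wedge\varphi_i(z,y))$, so that $\varphi_{n+1}(s,t)$ says ``there is a path of length $2^{n+1}$ from $s$ to $t$'' using only the three variables $x,y,z$ via reuse. For the lower bound it takes two directed paths with marked endpoints, of lengths $2^{n+1}+1$ and $2^{n+1}$, and shows Duplicator wins the \emph{ordinary} game $G_n$ (any number of variables) by the truncated-distance argument of Example~\ref{1.2.13}. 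This yields the stronger conclusion that the property is not in $\textrm{FO}[k,n]$ for \emph{any} $k$, so in particular $\textrm{FO}[k',n']\nsubseteq\textrm{FO}[k,n]$ whenever $k'\geq 3$ and $n'>n$. Your route via two pebble games and the canonical type formula would give only the $k$-variable separation.

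Second, your $(k-1)$-ary search heuristic is wrong for $k>3$. In each round of $G^k_m$ Spoiler moves a \emph{single} pebble; this can split one interval into two but simultaneously merges the two intervals that were adjacent to the pebble's old position, so the net refinement per move is binary regardless of $k$. Extra pebbles let Spoiler remember more anchors, but they do not let it subdivide an interval into $k-1$ pieces in one move. Hence your proposed threshold $M_n\approx(k-1)^{n+1}$ is too large for $k\geq 4$: Spoiler cannot win $G_{n+1}^k$ on orders of that size. The scheme is salvageable if you fix $k=3$ (where $(k-1)^{n+1}=2^{n+1}$ and your estimate is correct) and then observe $\textrm{FO}[3,n+1]\subseteq\textrm{FO}[k,n+1]$; but at that point you are essentially reproducing the paper's argument with the explicit formula replaced by a Spoiler strategy, and you still need to account for the two moves Spoiler spends pinning the endpoints when $\min,\max$ are not constants.
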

\begin{proof}
$\newline$
This proof is valid for both, structures with and without ordering. If one wants to consider the proof for unordered structures, then when we mention ordering in the proof one should understand it as the ordering of the names of the vertices, (but keep in mind that we are using that natural ordering of the names of vertices in the proof even when we are considering unordered structures).$\newline$
We will show that the property of directed graphs $\mathcal{G} = (G, E^{\mathcal{G}},s,t)$ with two distinguished elements $s,t$ saying that "the distance between $s,t$ is $2^{n+1}$" is expressible in $\textrm{FO}[3,n+1]$ but not expressible with a first-order formula of quantifier rank $\leq n$. And thus, we actually prove more than the statement of the proposition, in particular if $k > k^{\prime}\geq 3$ and $n^{\prime} > n$ then $\textrm{FO}[k^{\prime},n^{\prime}] \nsubseteq \textrm{FO}[k,n]$.

Set
$\varphi_0(x,y) := Exy$, this is of quantifier rank $0$ and it uses no variables outside $\{x,y,z\}$.$\newline$
Assuming that the formulas $\varphi_i(v,w)$ with $v,w \in \{x,y,z\}$ have been defined, and that they use variables only from $\{x,y,z\}$, and that they are of quantifier rank $i$, then $\newline$
$\varphi_{i+1}(x,y):= \exists z (\varphi_i(x,z)\wedge\varphi_i(z,y))$ is of quantifier rank $i+1$ and uses variables only from the set $\{x,y,z\}$. $\newline$
The other $\varphi_{i+1}(v,w)$ with $v,w \in \{x,y,z\}$ can be defined similarly. $\newline$
It can be easily seen, by induction, that $\varphi_{n+1}(s,t)$ expresses that the distance between $s,t$ is $2^{n+1}$.$\newline$
Now we show that Duplicator wins $G_n(\mathcal{A},\mathcal{B})$, where $\newline$ $\mathcal{A}=(\{0,\ldots,2^{n+1}+1\},\{(i,i+1)\;|\; 0\leq i \leq 2^{n+1}\},0,2^{n+1}+1)$ and, $\newline$ $\mathcal{B} = (\{0,\ldots,2^{n+1}\}, \{(i,i+1)\;|\; 0\leq i \leq 2^{n+1}-1\},0,2^{n+1})$.

Let the $d_j$-distance be defined as in Example \ref{1.2.13}.

For $0\leq j \leq n$, set $\newline$
$$I_j := \{ p \in \textrm{Part}(\mathcal{A}, \mathcal{B})\; | \; p \; \text{preserves order and}\; d_{j+1}(a,a^{\prime}) = d_{j+1}(p(a),p(a^{\prime}))$$ $$\text{for all} \; a,a^{\prime} \in do(p) \}$$
$d_{j+1}(s^{\mathcal{A}},t^{\mathcal{A}}) = \infty = d_{j+1}(s^{\mathcal{B}},t^{\mathcal{B}})\; \text{for all} \; 0\leq j \leq n$. So $\newline$ $\{(s^{\mathcal{A}},s^{\mathcal{B}}) , (t^{\mathcal{A}},t^{\mathcal{B}})\} \in I_j \; \text{for all} \; 0 \leq j \leq n$. Hence $I_j \neq \emptyset \quad$ for all $0 \leq j \leq n$.
Suppose $j < n$, $p\in I_{j+1}$, $a\in A$,
If there is $a^{\prime} \in do(p)$ such that $d_{j+1}(a,a^{\prime}) < 2^{j+1}$, then there is exactly one $b\in B$ for which $p\cup \{(a,b)\}$ is a partial isomorphism preserving order and $d_{j+1}$-distances.

If there is no such $a^{\prime}$, let $do(p) = \{a_1,\ldots,a_m\}$ with $a_1 < \ldots < a_m$. Then for some $1\leq i \leq m-1$, $a_i < a < a_{i+1}$, and $d_{j+1}(a_i,a)=\infty$, $d_{j+1}(a,a_{i+1})=\infty$, hence $d_{j+2}(a_i,a_{i+1})=\infty$ and therefore,$\newline$ $d_{j+2}(p(a_i),p(a_{i+1}))=\infty$. Thus there is a $b$ such that $p(a_i) < b < p(a_{i+1})$, $d_{j+1}(p(a_i),b) = \infty$ and $d_{j+1}(b,p(a_{i+1}))=\infty$. It is easy to see that $p\cup\{(a,b)\}$ is a partial isomorphism in $I_j$.
$\newline$
(Note that here when $j=0$ that $\newline$ "$d_{j+2}(a_i,a_{i+1})=\infty$ and $d_{j+2}(p(a_i),p(a_{i+1}))=\infty$" means that $\newline$ $d(a_i,a_{i+1}),d(p(a_i),p(a_{i+1})) \geq 4$ and so Duplicator can find a vertex between $p(a_i),p(a_{i+1})$ that is not adjacent to any one of them as $a$ is not adjacent to any of $a_i,a_{i+1}$).$\newline$
The back property can be proven similarly.
\end{proof}

\begin{proposition}\label{3.3.5}$\newline$
There is a property of words expressible in $\textrm{FO}[2,2^n +2]$ but not expressible in $\textrm{FO}[k,n]$ for any $k$.
\end{proposition}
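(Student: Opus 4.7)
I take $P_n$ to be the property ``the word contains a strictly increasing chain of $2^n+2$ positions'', equivalently ``the length is at least $2^n+2$''. The plan is to produce a sentence in $\textrm{FO}[2, 2^n+2]$ defining $P_n$, and then to rule out $P_n$ from $\textrm{FO}[k,n]$ by invoking Example \ref{1.2.13} on two all-zero words whose lengths straddle $2^n+2$.

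For the upper bound I would build, by mutual induction on $k \geq 1$, two $\textrm{FO}^2$-formulas $\sigma_k^x(x)$ and $\sigma_k^y(y)$ of quantifier rank $k-1$, each asserting ``the free variable starts a $<$-chain of length $k$''. Setting $\sigma_1^x(x) := (x=x)$ and $\sigma_1^y(y) := (y=y)$, the inductive clauses are
\[
\sigma_{k+1}^x(x) \;:=\; \exists y\,\bigl(x<y \,\wedge\, \sigma_k^y(y)\bigr), \qquad \sigma_{k+1}^y(y) \;:=\; \exists x\,\bigl(y<x \,\wedge\, \sigma_k^x(x)\bigr).
\]
Because the free variable flips between $x$ and $y$ at every level of the recursion, only the two symbols $x,y$ ever appear, and exactly one new quantifier is introduced per step. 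Consequently $\xi_{2^n+2} := \exists x\,\sigma_{2^n+2}^x(x)$ has quantifier rank $2^n+2$, uses only two variables, and defines $P_n$; so $P_n \in \textrm{FO}[2, 2^n+2]$.

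For the lower bound, let $\mathcal{A}_n$ and $\mathcal{B}_n$ be words of lengths $2^n+2$ and $2^n+1$ respectively, each with every unary predicate $R_i$ interpreted as the empty set (all positions carry the same letter). Then $\mathcal{A}_n \vDash P_n$ but $\mathcal{B}_n \not\vDash P_n$. Both structures have size $> 2^n$, so the construction of Example \ref{1.2.13} yields a sequence $(I_j)_{j \leq n}$ of distance-preserving partial isomorphisms witnessing $\mathcal{A}_n \cong_n \mathcal{B}_n$ between their $<$-reducts. Since the $R_i$-predicates are empty in both words, every member of each $I_j$ is automatically a partial isomorphism for the full word vocabulary, so $\mathcal{A}_n \cong_n \mathcal{B}_n$ as words; by the corollary to the \ef\ theorem this upgrades to $\mathcal{A}_n \equiv_n \mathcal{B}_n$. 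Hence no first-order sentence of quantifier rank $\leq n$ can separate them, and in particular $P_n \notin \textrm{FO}[k,n]$ for any $k$.

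The hard part is squeezing the chain-definition into only two variables at the claimed quantifier rank. A naive $\exists x_1 \cdots \exists x_{2^n+2}$ costs $2^n+2$ variables, while a midpoint-splitting recursion $\exists z\bigl(x<z<y \wedge \text{half}(x,z) \wedge \text{half}(z,y)\bigr)$ shrinks the rank to $O(n)$ but demands a third variable. The alternation between $\sigma_k^x$ and $\sigma_k^y$ above dodges both pitfalls: the name of the free variable flips with each step, so the bound variable at the next step can always reuse the name just freed, and the pool $\{x,y\}$ suffices. Once the upper-bound formula is in hand, the lower bound is a direct application of Example \ref{1.2.13} together with the observation that empty unary predicates are trivially preserved by any partial isomorphism of the ordered reduct.
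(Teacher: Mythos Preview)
Your proof is correct and in fact more self-contained than the paper's. Both arguments use the same lower-bound strategy: pick two words of lengths $2^n+2$ and $2^n+1$ over a constant letter and invoke Example \ref{1.2.13} to conclude $\equiv_n$. Where you diverge is in the witness property and the upper bound. The paper takes $P_n$ to be ``the $(2^n+2)$-ranker $\rhd_1\cdots\rhd_1$ is defined'' (i.e.\ the word has at least $2^n+2$ occurrences of $1$) and imports the Weis--Immerman theorem to place this in $\textrm{FO}[2,2^n+2]$; you take $P_n$ to be simply ``length $\geq 2^n+2$'' and build the $\textrm{FO}^2$-sentence by hand via the standard variable-flipping recursion. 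Your route avoids the external citation entirely and makes the two-variable mechanism transparent, at the cost of choosing a slightly different (and simpler) property. On the test words both properties coincide, so the lower-bound step is identical. One small remark: the paper worries that Example \ref{1.2.13} is stated for the vocabulary $\{<,\min,\max\}$ while word structures need not carry these constants, and patches the $I_j$'s accordingly; you silently sidestep this, which is legitimate since a Duplicator win over the richer vocabulary immediately yields a win over the reduct $\{<\}$, and your empty $R_i$'s are trivially preserved.
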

Before we give a proof we need first to mention some definitions and facts.
Let $\Sigma$ be a finite alphabet and let $\Sigma^{\star}$ denote the set of finite words over it.

\begin{definition} (Boundary Position)\cite{strings} $\newline$
A \emph{boundary position} denotes the first or last occurrence of a letter in a given word. Boundary positions are of the form $d_a$ where $d\in \{\rhd,\lhd\}$ and $a\in \Sigma$. The interpretation of a boundary position $d_a$ on a word $w=w_1...w_{|w|}\in \Sigma^{\star}$ is defined as follows :

\[
 d_a(w)=
  \begin{cases}
   \min\{i\in[1,|w|] \;| \; w_i=a\} & \text { if }d=\rhd \\
  \max\{i\in[1,|w|] \;| \; w_i=a\} & \text { if }d=\lhd \\
  \end{cases}
\]
$\newline$
If there are no occurrences of $a$ in $w$ then we set $d_a(w)$ to be undefined. A boundary position can also be specified with respect to a position $q\in[1,|w|]$ :
\[
 d_a(w,q)=
  \begin{cases}
   \min\{i\in[q+1,|w|]\;|\;w_i=a\} & \text { if }d=\rhd \\
  \max\{i\in[1,q-1]\;|\;w_i=a\} & \text { if }d=\lhd \\
  \end{cases}
\]

\end{definition}
\begin{definition} (Rankers) \cite{strings}$\newline$
Let $n$ be a positive integer. An $n$-\emph{ranker} $r$ is a sequence of $n$ boundary positions. The interpretation of an $n$-ranker $r=(p_1,...,p_n)$ on a word $w$ is defined as follows :

\[
 r(w)=
  \begin{cases}
   p_1(w) & \text { if }r=(p_1) \\
   \text{undefined} & \text { if } (p_1,\ldots,p_{n-1})(w) \;\text{is undefined} \\
   p_n(w,(p_1,\ldots,p_{n-1})(w)) & \text{otherwise} \\
  \end{cases}
\]

Instead of writing $n$-rankers as a formal sequence $(p_1,\ldots,p_n)$, we often use the simpler notation $p_1\ldots p_n$. We denote the set of all $n$-rankers by $R_n$, and the set of all $n$-rankers that are defined over a word $w$ by $R_n(w)$.
\end{definition}
\begin{theorem}(Expressibility of the Defined-ness of a Ranker, Weis and Immerman 2007 \cite{strings}) $\newline$
Let $n$ be a positive integer, and let $r\in R_n$. There is a formula $\newline$ $\varphi_r \in \textrm{FO}[2,n]$ such that for all $w\in \Sigma^{\star}$, $w\vDash \varphi_r \Leftrightarrow r\in R_n(w)$.
\end{theorem}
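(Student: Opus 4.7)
My plan is to prove the theorem by induction on $n$, the length of the ranker $r = (d_1 a_1, \ldots, d_n a_n)$. The base case $n = 1$ is immediate: for $r = d_1 a_1$, defined-ness on $w$ amounts to the existence of an $a_1$-position, captured by $\varphi_r := \exists x R_{a_1}(x)$, which lies in $\textrm{FO}[1,1] \subseteq \textrm{FO}[2,1]$.

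For the inductive step, the strategy is to express defined-ness of the $n$-ranker as a conjunction $\varphi_r := \bigwedge_{k=1}^{n} \psi_k$, where each $\psi_k \in \textrm{FO}[2,k]$ asserts that the $k$-th position $q_k$ visited by the ranker's computation exists. Each $\psi_k$ is built by recursively unfolding ``$q_k$ exists'' using the elementary equivalences
\[
 p > \min S \;\Leftrightarrow\; \exists s \in S\,(p > s), \qquad p > \max S \;\Leftrightarrow\; \forall s \in S\,(p > s),
\]
together with their symmetric versions for $p < \min S$ and $p < \max S$, where $S$ is the set of $a_{j-1}$-positions in the relevant sub-region at the $j$-th unfolding step. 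The nonemptiness of $S$ (required for the $\forall$-equivalence to be correct) is secured by conjoining with $\psi_{k-1}$. Applying these equivalences $k$ times yields a nested block of $k$ quantifiers --- $\exists$ when the consecutive directions $(d_j, d_{j-1})$ agree and $\forall$ when they differ --- with a single atomic comparison between the position variables of adjacent levels at each level. Renaming the $k$ bound variables as $x, y, x, y, \ldots$ alternately and relying on the usual shadowing, one obtains $\psi_k$ in $\textrm{FO}[2,k]$, so $\varphi_r$ has quantifier rank $n$ and uses only the two variables $x, y$.

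The main technical obstacle is verifying that the two-variable renaming is consistent across the entire nested structure for every direction sequence $(d_1, \ldots, d_n)$. The key structural point is that at every nesting level the atomic comparison between the current and previous pivot variables sits at the top of the current level's body, outside the scope of the inner quantifier block that rebinds one of the two variable names; therefore the shadowing is harmless, because the outer-level variable is referenced only in the sibling implication-consequent, never inside the inner block. Making this invariant precise and verifying it by induction on $k$ --- and handling the side conditions on nonemptiness through $\psi_{k-1}$ inside the conjunction --- is the main work, but once it is done, the construction yields $\varphi_r \in \textrm{FO}[2,n]$ as required.
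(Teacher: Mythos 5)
The paper offers no proof of this statement: it is quoted from Weis and Immerman \cite{strings} and used as a black box in the proof of Proposition \ref{3.3.5}, so there is nothing in the text to compare your argument against. Judged on its own, your outline is a faithful reconstruction of the known construction and I see no error in it. The conjunction $\bigwedge_{k=1}^{n}\psi_k$ keeps the quantifier rank at $\max_k qr(\psi_k)=n$; your rule ($\exists$ when $d_j$ and $d_{j-1}$ agree, $\forall$ when they differ) is exactly what the four cases $p>\min S$, $p>\max S$, $p<\min S$, $p<\max S$ yield; only the $\forall$-unfoldings need $S\neq\emptyset$, and that is supplied by the lower-index conjuncts; and the atomic comparison between adjacent pivot variables is a sibling of, not nested inside, the next quantifier, so reusing two variable names under shadowing is sound. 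What you have is a correct plan rather than a finished proof: you should state explicitly the invariant proved by induction on $k$ --- namely that, modulo $\psi_1\wedge\dots\wedge\psi_{k-1}$ (equivalently, defined-ness of the length-$(k-1)$ prefix of $r$), the formula $\psi_k$ holds iff the length-$k$ prefix is defined, from which the equivalence of $\bigwedge_k\psi_k$ with defined-ness of $r$ follows by considering the first undefined prefix --- and carry out the one-line scope check you defer. Neither step conceals a difficulty.
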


Now we prove Proposition \ref{3.3.5} :
\begin{proof}
The property of "the defined-ness of the $2^n+2$-ranker $\underset{2^n+2}{\underbrace{\rhd_1\rhd_1 \ldots \rhd_1}}$" is expressible in $\textrm{FO}[2,2^n+2]$ but not expressible in $\textrm{FO}[k,n]$ for any $k$.
From the theorem we have just mentioned of Immerman and Weis , this property is expressible in $\textrm{FO}[2,2^n +2]$; but from the proof of Example \ref{1.2.13}, (of the non-expressibility of evenness of ordered structures in first-order logic), Duplicator has a winning strategy on the following two strings in the $n$-round game : $\newline$ $w_1$ and $w_2$ strings of $1$'s of lengths $2^n+2$ and $2^n+1$ respectively (so the ranker $\underset{2^n+2}{\underbrace{\rhd_1\rhd_1 \ldots \rhd_1}}$ is defined in $w_1$ but not defined in $w_2$). $\newline$
It is important to note that in Example \ref{1.2.13} the ordering vocabulary contained $max$ and $min$, but here the vocabulary of strings does not necessarily contain these constants. The presence of these constants made it possible, in the proof of the second case for the forth property in the example, to restrict ourselves to the case when $a$ is between two elements $a_i$ and $a_{i+1}$ from the domain of the partial isomorphism. To recover this we may change the definition of $I_j$ so that it assumes that the first element and the last element of $A$ are in the domain of the partial isomorphism.
\end{proof}

\section{Infinite Disjunctions and a Revisit to Depth}
In this section we present infinitary logic (logic with infinite disjunctions). All the fixed-point extensions of first-order logic that we have exhibited are contained in infinitary finite variable logic. Provoked by this, we suggest a rough relationship between depth and the number of disjunctions. Then we introduce a new complexity measure $\textrm{FO}_{\bigvee}[f(n),g(n)]$ which counts the number, $f(n)$, of $\vee$-symbols, and the number, $g(n)$, of variables, in first-order formulas needed to express a given property. We prove that for $f(n)\geq \log{n}$, $\textrm{NSPACE}[f(n)] \subseteq \textrm{FO}_{\bigvee}[f(n)+\left(\frac{f(n)}{\log{n}}\right)^2,\frac{f(n)}{\log{n}}]$, and that for any $f(n),g(n)$, $\textrm{FO}_{\bigvee}[f(n),g(n)]\subseteq \textrm{DSPACE}[f(n)g(n)\log{n}]$, and as a corollary we have : $$\underset{k\geq 1}{\bigcup} \textrm{FO}_{\bigvee}[n^k+\left(\frac{n^k}{\log{n}}\right)^2,\frac{n^k}{\log{n}}]= \textrm{FO}[2^{n^{O(1)}}].$$

\begin{definition} (The Infinitary Logic $\textrm{L}_{\infty\omega}$) $\newline$
Let $\tau$ be a vocabulary. The class of $\textrm{L}_{\infty\omega}$-formulas over $\tau$ is given by the following clauses :
\begin{enumerate}
\item[-] it contains all atomic first-order formulas over $\tau$
\item[-] if $\varphi$ is a formula then so is $\neg \varphi$
\item[-] if $\varphi$ is a formula and $x$ a variable then $\exists x \varphi$ is a formula
\item[-] if $\Psi$ is a set of formulas (possibly infinite) then $\bigvee \Psi$ is a formula.
\end{enumerate}
The semantics is a direct extension of the semantics of first-order logic with $\bigvee \Psi$ being interpreted as the disjunction over all formulas in $\Psi$; hence, neglecting the interpretation of the free variables,
$$\mathcal{A}\vDash \Psi \;\;\; \text{iff} \;\;\; \text{for some} \; \psi \in \Psi, \; \mathcal{A}\vDash \psi.$$
We set $\bigwedge \Psi := \neg \bigvee \{\neg \psi \; | \; \psi \in \Psi\}$. Then $\bigwedge \Psi$ is interpreted as the conjunction over all formulas in $\Psi$. By identifying $(\varphi\vee \psi)$ with $\bigvee \{\varphi,\psi\}$ we see that $\textrm{L}_{\infty\omega}$ is an extension of first-order logic.
\end{definition}

In finite model theory, when talking about an $\textrm{L}_{\infty\omega}$-formula we may assume that it contains only a countable number of disjunctions as the following proposition suggests :

\begin{proposition}\cite{Flum}
In the finite, every $\textrm{L}_{\infty\omega}$- formula $\varphi(\overline{x})$ is equivalent to an $\textrm{L}_{\infty\omega}$-formula $\psi(\overline{x})$ with only countably many disjunctions and whose free variables are from the free variable of $\varphi$.
\end{proposition}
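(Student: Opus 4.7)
The plan is to reduce the (potentially uncountable) disjunctions in $\varphi$ to a single disjunction ranging over the countably many isomorphism types of pointed finite structures. Because $\tau$ is finite, up to isomorphism there are only countably many pairs $(\mathcal{A},\overline{a})$ with $\mathcal{A}$ a finite $\tau$-structure and $\overline{a}\in A^k$, where $k$ is the length of $\overline{x}$.

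First I would construct, for each finite $\tau$-structure $\mathcal{A}$ with universe $A=\{a_1,\ldots,a_n\}$ and each tuple $\overline{a}=(a_{i_1},\ldots,a_{i_k})\in A^k$, a first-order formula $\chi_{\mathcal{A},\overline{a}}(\overline{x})$ whose free variables lie in $\overline{x}$ and which characterizes $(\mathcal{A},\overline{a})$ up to isomorphism in the finite, namely
\[
\chi_{\mathcal{A},\overline{a}}(\overline{x}) := \exists y_1\cdots\exists y_n\Bigl(\mathrm{Dist}\wedge \mathrm{Univ}\wedge \mathrm{Diag}_\mathcal{A}\wedge \bigwedge_{j=1}^k x_j=y_{i_j}\Bigr),
\]
where $\mathrm{Dist}$ says the $y_l$ are pairwise distinct, $\mathrm{Univ}$ says $\forall z\bigvee_{l=1}^n z=y_l$, and $\mathrm{Diag}_\mathcal{A}$ is the conjunction of all atomic and negated atomic $\tau$-facts about $y_1,\ldots,y_n$ read off from $\mathcal{A}$ (including clauses $c=y_{l_c}$ for each constant $c\in\tau$). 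Since $\mathcal{A}$ and $\tau$ are finite, $\chi_{\mathcal{A},\overline{a}}$ is a first-order formula, so in particular has only finitely many disjunctions; and by construction, for every finite $\tau$-structure $\mathcal{B}$ and every $\overline{b}\in B^k$,
\[
\mathcal{B}\vDash \chi_{\mathcal{A},\overline{a}}[\overline{b}] \quad\text{iff}\quad (\mathcal{B},\overline{b})\cong(\mathcal{A},\overline{a}).
\]

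Next I would fix a set $S$ of representatives, one from each isomorphism class of pairs $(\mathcal{A},\overline{a})$ with $\mathcal{A}$ finite and $\overline{a}\in A^k$; this $S$ is countable. Let $S_\varphi:=\{(\mathcal{A},\overline{a})\in S\mid \mathcal{A}\vDash \varphi[\overline{a}]\}$, and put
\[
\psi(\overline{x}) \;:=\; \bigvee\{\chi_{\mathcal{A},\overline{a}}(\overline{x})\mid (\mathcal{A},\overline{a})\in S_\varphi\}.
\]
Then $\psi(\overline{x})$ has $\mathit{free}(\psi)\subseteq\{x_1,\ldots,x_k\}=\overline{x}$, its outer disjunction is indexed by the countable set $S_\varphi$, and each disjunct is first-order, so in total only countably many disjunctions occur in $\psi$.

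Finally I would verify equivalence: for any finite $\tau$-structure $\mathcal{B}$ and any $\overline{b}\in B^k$, if $\mathcal{B}\vDash\varphi[\overline{b}]$ then the unique $(\mathcal{A},\overline{a})\in S$ isomorphic to $(\mathcal{B},\overline{b})$ lies in $S_\varphi$ (using the isomorphism invariance of $\varphi$, proved by a straightforward induction on $\textrm{L}_{\infty\omega}$-formulas using the provisio that all classes are closed under isomorphism), so $\mathcal{B}\vDash\chi_{\mathcal{A},\overline{a}}[\overline{b}]$ and hence $\mathcal{B}\vDash\psi[\overline{b}]$; conversely, if $\mathcal{B}\vDash\psi[\overline{b}]$ then $\mathcal{B}\vDash\chi_{\mathcal{A},\overline{a}}[\overline{b}]$ for some $(\mathcal{A},\overline{a})\in S_\varphi$, giving $(\mathcal{B},\overline{b})\cong(\mathcal{A},\overline{a})$ and thus $\mathcal{B}\vDash\varphi[\overline{b}]$ by isomorphism invariance. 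There is no real obstacle here; the only subtle point is the invariance of $\textrm{L}_{\infty\omega}$-formulas under isomorphism, which is a routine induction but should be remarked upon, since it is exactly what allows an uncountable set of disjuncts in the original $\varphi$ to collapse to the countable index set $S_\varphi$.
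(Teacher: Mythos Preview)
Your argument is correct, but it takes a different route from the paper's. The paper proceeds by induction on the construction of $\varphi$, preserving the formula's structure and, at each disjunction $\bigvee_{i\in I}\varphi_i$, selecting a countable $I_0\subseteq I$ by choosing one witnessing index for each pointed finite structure $(\mathcal{C},\overline{c})$ (with $\mathcal{C}$ on a standard universe) that satisfies some $\varphi_i$. Your approach bypasses the induction entirely and rewrites $\varphi$ globally as a single countable disjunction of first-order isomorphism-type formulas $\chi_{\mathcal{A},\overline{a}}$. This is arguably cleaner and even yields a stronger normal form (a countable disjunction of \emph{first-order} formulas). The trade-off is that the paper's structure-preserving reduction keeps the set of variables unchanged: if $\varphi\in \textrm{L}^s_{\infty\omega}$ then so is the resulting $\psi$, which matters for the finite-variable fragments studied immediately afterwards. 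Your $\chi_{\mathcal{A},\overline{a}}$, by contrast, introduces $\|\mathcal{A}\|$ fresh bound variables, so your $\psi$ will generally not lie in any $\textrm{L}^s_{\infty\omega}$. For the proposition as stated this does not matter, but it is worth being aware of.
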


\begin{proof}
By induction on the rules for $\textrm{L}_{\infty\omega}$-formulas. The translation procedure preserves the "structure" of formulas and only replaces infinitary disjunctions by countable ones. In the main step suppose that $$\varphi(\overline{x}) = \bigvee \{\varphi_i(\overline{x}) \; | \; i \in I\}$$
is an $\textrm{L}_{\infty\omega}$-formula. For each finite structure $\mathcal{C}$ with universe $\{1,2,\ldots,||\mathcal{C}||\}$ and each $\overline{c}\in C$, if there exists $i \in I$ such that $\mathcal{C}\vDash \varphi_i[\overline{c}]$, choose such an $i$. Let $I_0$ be the set of $i$'s chosen in this way. Then $I_0$ is countable and $\bigvee \{\varphi_i(\overline{x}) \; | \; i \in I\}$ and $\bigvee \{\varphi_i(\overline{x}) \; | \; i \in I_0\}$ are equivalent in the finite.
\end{proof}

\begin{definition} (Infinitary Finite Variable Logics $L^s_{\infty\omega}$) $\newline$
$L^s_{\infty\omega}$ is the fragment of $L_{\infty\omega}$ of formulas that use at most $s$ distinct variables (free and bound); and $L^{\omega}_{\infty\omega} := \underset{s\geq 1} {\bigcup} L^s_{\infty\omega}$.
\end{definition}

\begin{theorem} \cite{Flum}
Let $\mathcal{A}$ and $\mathcal{B}$ be structures, and let $\overline{a}\in (A\cup\{*\})^s$ and $\overline{b}\in (B\cup\{*\})^s$ with $supp(\overline{a}) = supp(\overline{b})$. Then the following are equivalent:
              \begin{enumerate}
                    \item[(i)] Duplicator wins $G_{\infty}^s(\overline{A},\overline{a},\overline{B},\overline{b})$.
                    \item[(ii)] $\overline{a} \mapsto \overline{b} \in W_{\infty}^s(\mathcal{A},\mathcal{B})$ and $W_{\infty}^s(\mathcal{A},\mathcal{B}): \mathcal{A} \cong_{part}^s \mathcal{B}$.
                    \item[(iii)] There is $I$ with $\overline{a} \mapsto \overline{b} \in I$ such that $I : \mathcal{A} \cong_{part}^s \mathcal{B}$.
                    \item[(iv)] $\overline{a}$ satisfies in $\mathcal{A}$ the same $\textrm{L}_{\infty\omega}^s$-formulas as $\overline{b}$ in $\mathcal{B}$.
              \end{enumerate}
\end{theorem}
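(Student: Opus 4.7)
The plan is to prove the cycle (i) $\Rightarrow$ (ii) $\Rightarrow$ (iii) $\Rightarrow$ (i) by standard back-and-forth arguments, and then handle the logical clause (iv) by showing (iii) $\Rightarrow$ (iv) (induction on formulas) and (iv) $\Rightarrow$ (iii) (use infinitary conjunctions to manufacture a back-and-forth system).

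\textbf{The game-theoretic cycle.} For (i) $\Rightarrow$ (ii), I would show that the family $W_{\infty}^s(\mathcal{A},\mathcal{B})$ itself has the $s$-back and $s$-forth properties. Given $\overline{a}\mapsto\overline{b}\in W_{\infty}^s$, $1\leq i\leq s$, and $a\in A$, consider the hypothetical move in which Spoiler places $\alpha_i$ on $a$; because Duplicator has a winning strategy from the current position, it prescribes some response $b\in B$ with $\overline{a}\tfrac{a}{i}\mapsto \overline{b}\tfrac{b}{i}$ again a winning position, hence in $W_{\infty}^s$. The back property is symmetric. (ii) $\Rightarrow$ (iii) is trivial by taking $I:=W_{\infty}^s(\mathcal{A},\mathcal{B})$. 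For (iii) $\Rightarrow$ (i), Duplicator plays to preserve the invariant ``current $s$-partial map lies in $I$''; since $I$ has both back and forth properties, every Spoiler move has a response keeping the position in $I$, and since each element of $I$ is an $s$-partial isomorphism, Duplicator never loses in finitely many (hence in infinitely many) moves.

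\textbf{The direction (iii) $\Rightarrow$ (iv).} I would prove by induction on the construction of $\textrm{L}_{\infty\omega}^s$-formulas the stronger statement: for every $\overline{a}'\mapsto\overline{b}'\in I$ and every $\textrm{L}_{\infty\omega}^s$-formula $\varphi(\overline{v})$ whose free variables have indices in $\mathrm{supp}(\overline{a}')$, $\mathcal{A}\vDash \varphi[\overline{a}']$ iff $\mathcal{B}\vDash\varphi[\overline{b}']$. The atomic case is immediate from $\overline{a}'\mapsto\overline{b}'$ being an $s$-partial isomorphism; negation is formal; the infinitary disjunction $\bigvee\Psi$ reduces to the inductive hypothesis for each $\psi\in\Psi$. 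The only interesting case is $\exists v_i\,\psi$: if $\mathcal{A}\vDash\exists v_i\,\psi[\overline{a}']$, take a witness $a\in A$ and apply the forth property of $I$ at index $i$ to obtain $b\in B$ with $\overline{a}'\tfrac{a}{i}\mapsto\overline{b}'\tfrac{b}{i}\in I$; the inductive hypothesis then gives $\mathcal{B}\vDash\psi[\overline{b}'\tfrac{b}{i}]$. The other direction uses the back property symmetrically.

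\textbf{The direction (iv) $\Rightarrow$ (iii), the main obstacle.} I define
\[
I:=\{\overline{a}'\mapsto\overline{b}'\mid \overline{a}'\text{ and }\overline{b}'\text{ satisfy the same }\textrm{L}_{\infty\omega}^s\text{-formulas in }\mathcal{A},\mathcal{B}\},
\]
which contains $\overline{a}\mapsto\overline{b}$ by (iv) and consists of $s$-partial isomorphisms because atomic formulas and their negations belong to $\textrm{L}_{\infty\omega}^s$. The delicate point is verifying the forth property (back is symmetric): given $\overline{a}'\mapsto\overline{b}'\in I$, an index $i$, and $a\in A$, I need $b\in B$ with $\overline{a}'\tfrac{a}{i}\mapsto\overline{b}'\tfrac{b}{i}\in I$. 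For this I form the ``type'' of $\overline{a}'\tfrac{a}{i}$, namely the infinitary conjunction
\[
\Theta(\overline{v}):=\bigwedge\bigl\{\psi(\overline{v})\in \textrm{L}_{\infty\omega}^s\mid \mathcal{A}\vDash\psi[\overline{a}'\tfrac{a}{i}]\bigr\},
\]
which lies in $\textrm{L}_{\infty\omega}^s$ because the collection is closed under negation. Then $\mathcal{A}\vDash \exists v_i\,\Theta(\overline{v})[\overline{a}']$ witnessed by $a$, so by the assumption that $\overline{a}'\mapsto\overline{b}'\in I$ and that $\exists v_i\,\Theta\in \textrm{L}_{\infty\omega}^s$, we get $\mathcal{B}\vDash \exists v_i\,\Theta[\overline{b}']$, producing the required $b$. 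By construction $\overline{b}'\tfrac{b}{i}$ satisfies every formula true of $\overline{a}'\tfrac{a}{i}$, and by closure under negation also fails every formula it fails, so $\overline{a}'\tfrac{a}{i}\mapsto\overline{b}'\tfrac{b}{i}\in I$. The subtlety here is confirming that $\Theta$ is genuinely an $\textrm{L}_{\infty\omega}^s$-formula; this is fine because the syntax of $\textrm{L}_{\infty\omega}$ allows disjunctions over arbitrary sets of formulas, and in the finite setting one can additionally invoke the preceding proposition to restrict to a countable equivalent collection.
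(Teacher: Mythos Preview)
The paper does not actually supply a proof of this theorem; it is merely stated with a citation to \cite{Flum}, so there is no in-paper argument to compare against. Your outline is the standard Ebbinghaus--Flum proof and the game-theoretic cycle (i)$\Rightarrow$(ii)$\Rightarrow$(iii)$\Rightarrow$(i) together with (iii)$\Rightarrow$(iv) are handled correctly.

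There is, however, a genuine set-theoretic gap in your (iv)$\Rightarrow$(iii). Your formula $\Theta$ is the conjunction of \emph{all} $\mathrm{L}^s_{\infty\omega}$-formulas true of $\overline{a}'\tfrac{a}{i}$; since the $\mathrm{L}_{\infty\omega}$-formulas form a proper class, this collection is a proper class, and by the paper's definition $\bigvee\Psi$ (hence $\bigwedge\Psi$) is only a formula when $\Psi$ is a \emph{set}. The proposition you invoke (``every $\mathrm{L}_{\infty\omega}$-formula is equivalent to one with countably many disjunctions'') does not help here: it reduces a single formula, but you need to cut the proper class of conjuncts down to a set \emph{before} you are allowed to form the conjunction. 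Two clean repairs are available. First, replace $\Theta$ by the countable conjunction $\bigwedge_{m\geq 0}\,{}^s\psi^m_{\mathcal{A},\,\overline{a}'\frac{a}{i}}$ of the $s$-$m$-isomorphism types already defined in the paper; this lies in $\mathrm{L}^s_{\infty\omega}$, and any $b$ witnessing $\exists v_i\,\Theta$ in $\mathcal{B}$ then makes $\overline{a}'\tfrac{a}{i}\mapsto\overline{b}'\tfrac{b}{i}$ lie in $W^s_m(\mathcal{A},\mathcal{B})$ for every $m$, which by finiteness (the $W^s_m$ stabilize) puts it in $W^s_\infty$ and hence in your $I$. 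Second, and even more directly, prove (iv)$\Rightarrow$(i) instead: since $\mathrm{FO}^s\subseteq\mathrm{L}^s_{\infty\omega}$, (iv) gives $\overline{a}\mapsto\overline{b}\in W^s_m(\mathcal{A},\mathcal{B})$ for all $m$ by the finite-game theorem already stated in the paper; the decreasing chain $W^s_0\supseteq W^s_1\supseteq\cdots$ of subsets of a finite set stabilizes at some $m_0$, whence $W^s_{m_0}$ has the back-and-forth property and contains $\overline{a}\mapsto\overline{b}$, yielding (iii) and hence (i).
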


\begin{corollary} \cite{Flum}
The following are equivalent :
               \begin{enumerate}
                    \item[(i)] Duplicator wins $G_{\infty}^s(\mathcal{A},\mathcal{B})$.
                    \item[(ii)] $W_{\infty}^s (\mathcal{A},\mathcal{B}): \mathcal{A} \cong_{part}^s \mathcal{B}$.
                    \item[(iii)] $\mathcal{A} \cong^s_{part} \mathcal{B}$.
                    \item[(iv)] $\mathcal{A} \equiv^{L^s_{\infty\omega}} \mathcal{B}$.
               \end{enumerate}
\end{corollary}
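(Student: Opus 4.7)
The plan is to deduce the corollary as the special case of the preceding theorem where $\overline{a}=\overline{b}=*\ldots*$, i.e., no pebble is initially placed. The shared-support hypothesis is trivially satisfied (both supports are empty), and the game $G_\infty^s(\mathcal{A},\mathcal{B})$ is, by definition, $G_\infty^s(\mathcal{A},*\ldots*,\mathcal{B},*\ldots*)$, so clause (i) of the corollary is literally clause (i) of the theorem in this case.

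With this identification, clauses (ii) and (iv) drop out immediately. The theorem's clause (ii) asserts the conjunction ``$*\ldots*\mapsto*\ldots*\in W_\infty^s(\mathcal{A},\mathcal{B})$ and $W_\infty^s(\mathcal{A},\mathcal{B}):\mathcal{A}\cong_{part}^s\mathcal{B}$''; the first conjunct only repeats (i), leaving (ii) of the corollary as the residual content. The theorem's clause (iv) says $*\ldots*$ satisfies the same $L^s_{\infty\omega}$-formulas in $\mathcal{A}$ as in $\mathcal{B}$; under the paper's convention that an assignment satisfies only formulas whose free variables lie in its support, these formulas are precisely the $L^s_{\infty\omega}$-sentences, yielding $\mathcal{A}\equiv^{L^s_{\infty\omega}}\mathcal{B}$.

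The only step that is not pure rewriting is matching clause (iii) of the corollary with clause (iii) of the theorem. From the theorem we obtain a back/forth system $I$ containing the empty map, which directly witnesses $\mathcal{A}\cong_{part}^s\mathcal{B}$. For the converse, starting from any nonempty $I$ with back/forth but possibly lacking the empty map, I plan to pass to its \emph{trimming closure} $I^\star$, consisting of all $s$-partial isomorphisms obtained from some $p\in I$ by replacing any subset of supported coordinates by $*$. Such trimmings are themselves $s$-partial isomorphisms; the full trimming of any $p\in I$ is the empty map, so $I^\star$ contains it; and back/forth transfer to $I^\star$: given $q\in I^\star$ with $q$ a trimming of $p\in I$, a back/forth extension of $p$ at $(i,a)$ yields $p^+\in I$ whose corresponding $(i,a)$-extension of $q$ is a trimming of $p^+$, hence lies in $I^\star$. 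I do not anticipate any real obstacle: this closure argument is routine, and every other implication is an unwinding of definitions.
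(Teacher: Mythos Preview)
Your proposal is correct and follows the intended route: the paper states this result as a corollary of the preceding theorem without giving a separate proof, and your specialization to $\overline{a}=\overline{b}=*\ldots*$ is exactly how it is meant to be read. The only point requiring any work is the mismatch at (iii), and your trimming-closure argument handles it cleanly; the rest is, as you say, pure unwinding of definitions.
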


\begin{theorem}\cite{Flum} $\newline$
$$\textrm{FO}(\textrm{PFP}) \subseteq \textrm{L}_{\infty\omega}^{\omega}$$
\end{theorem}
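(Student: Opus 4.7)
The plan is to prove this by structural induction on $\textrm{FO}(\textrm{PFP})$-formulas, showing that each such formula is equivalent to an $\textrm{L}^s_{\infty\omega}$-formula for some $s$. The base case (atomic formulas) and the inductive cases for $\neg$, $\vee$, and $\exists x$ are routine, since $\textrm{L}^\omega_{\infty\omega}$ is closed under these operations, and the number of variables needed does not grow beyond the maximum used by the immediate subformulas. All the real work lies in the $[\textrm{PFP}_{R,\overline{x}}\,\cdot\,]$ case.

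For that case, let $\theta = [\textrm{PFP}_{R,\overline{x}}\varphi]\overline{t}$, where by induction $\varphi$ is already equivalent to some $\textrm{L}^{s_0}_{\infty\omega}$-formula $\psi(R,\overline{x})$ (in the expanded vocabulary that includes the relation symbol $R$). I would then define the \emph{stage formulas} $\psi_r(\overline{x})$ by recursion on $r$: set $\psi_0(\overline{x}) := \textrm{F}$, and set $\psi_{r+1}(\overline{x})$ to be the result of replacing every atomic occurrence $R(\overline{y})$ in $\psi$ by $\psi_r(\overline{y})$, performing capture-avoiding renaming of bound variables. A straightforward induction on $r$ shows that in every structure $\mathcal{A}$, the formula $\psi_r$ defines exactly the $r$-th stage $F_r^{\varphi,\mathcal{A}}$.

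Using the fact that the partial fixed-point $F_\infty^{\varphi,\mathcal{A}}$ exists precisely when some stage $F_r^{\varphi,\mathcal{A}}$ equals $F_{r+1}^{\varphi,\mathcal{A}}$ (and is set to $\emptyset$ otherwise), I would then show that
$$\theta \;\equiv\; \bigvee_{r\geq 0}\Big(\,\forall x_1 \ldots \forall x_k\big(\psi_r(\overline{x})\leftrightarrow \psi_{r+1}(\overline{x})\big)\,\wedge\,\psi_r(\overline{t})\,\Big).$$
If the fixed-point exists and is reached first at stage $r_0$, exactly the disjuncts with $r\geq r_0$ contribute correctly, and $\overline{t}\in F_\infty^{\varphi,\mathcal{A}}$ iff $\mathcal{A}\vDash \psi_{r_0}(\overline{t})$; if it does not exist, every disjunct is false, matching the convention that $[\textrm{PFP}_{R,\overline{x}}\varphi]\overline{t}$ is then false. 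Since the index set $\mathbb{N}$ is countable, the right-hand side is a legitimate $\textrm{L}_{\infty\omega}$-formula.

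The hard part is the uniform variable bound: the whole infinite disjunction must lie in $\textrm{L}^s_{\infty\omega}$ for a single $s$ independent of $r$. The key observation is that capture-avoiding substitution does not force the variable pool to grow with $r$, provided one renames systematically. Concretely, I would fix a pool of variables of size $s := s_0 + k$ (where $k$ is the arity of $R$), reserve $\overline{x}$ as the free variables for every $\psi_r$, and before forming $\psi_{r+1}$ rename all bound variables of $\psi_r$ into the $s_0$ slots disjoint from the tuple $\overline{y}$ currently being substituted. An induction on $r$ then shows that every $\psi_r$ lies in $\textrm{L}^{s}_{\infty\omega}$, so the whole disjunction does too; combined with the closure of $\textrm{L}^\omega_{\infty\omega}$ under the other connectives, this completes the induction and hence the inclusion $\textrm{FO}(\textrm{PFP})\subseteq \textrm{L}^\omega_{\infty\omega}$.
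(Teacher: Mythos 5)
Your overall strategy is the same as the paper's: define first-order (here, infinitary) stage formulas $\psi_r$ and express $[\textrm{PFP}_{R,\overline{x}}\varphi]\overline{t}$ as the countable disjunction $\bigvee_{r\geq 0}(\forall\overline{x}(\psi_r\leftrightarrow\psi_{r+1})\wedge\psi_r(\overline{t}))$; your semantic analysis of that disjunction, including the non-existence case, is correct. The one genuine difference is organisational: the paper first invokes Proposition \ref{2.1.2} to reduce to a single $\textrm{PFP}$ applied to a first-order formula, whereas you run a structural induction that handles nested fixed-point operators directly; that is legitimate and arguably more self-contained, since the normal form the paper cites is stated only for ordered structures.

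The gap is exactly at the point you flag as ``the hard part'', and your proposed fix does not close it. You form $\psi_{r+1}$ by literally substituting $\psi_r(\overline{y})$ for $R(\overline{y})$ and claim that capture can always be avoided by renaming the bound variables of $\psi_r$ into the $s_0$ names disjoint from $\overline{y}$. But renaming a binder $\exists z\,\chi$ to $\exists z'\,\chi[z'/z]$ requires $z'$ to avoid the free variables of $\chi$, and a subformula $\chi$ deep inside $\psi_r$ can have nearly all of the $s_0+k$ pool variables free at once; when in addition $z'$ must avoid the $k$ variables of $\overline{y}$, there may be no admissible name left, and the problem recurs at every level of nesting, so no fixed pool is guaranteed to suffice. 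This is precisely why the paper's lemma does not substitute at all: it replaces each occurrence $X\overline{t}$ by $\exists\overline{y}\,(\overline{y}=\overline{t}\wedge\exists\overline{x}\,(\overline{x}=\overline{y}\wedge\varphi^n(\overline{x})))$, so that $\varphi^n$ is always inserted verbatim with its canonical free tuple $\overline{x}$, freshly re-bound just outside, and the terms $\overline{t}$ occur only in equalities outside the scope of the inner quantifiers. No renaming is ever needed and the count $k+s$ is immediate. Your argument becomes correct if you replace your substitution step by this re-binding device (or prove a careful renaming lemma in its place); as written, the uniform variable bound is asserted rather than established.
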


For a proof we first prove the following lemma :
\begin{lemma}\cite{Flum} $\newline$
Let $\varphi(X,\overline{x})$ be a first-order formula where all variables are among $v_1,\ldots,v_k$ and $X$. Suppose $X$ is $s$-ary and $\overline{x}=x_1\ldots x_s$ (with $x_1,\ldots,x_s$ among $v_1,\ldots,v_k$). Then for every $n$, there is a formula $\varphi^n(\overline{x})$ in $\textrm{FO}^{k+s}$ defining the stage $F_n^{\varphi}$.
\end{lemma}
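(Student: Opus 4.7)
The plan is to proceed by induction on $n$. Fix once and for all $s$ fresh variable symbols $y_1,\ldots,y_s$ that do not occur among $v_1,\ldots,v_k$, and write $\overline{y}$ for the tuple $y_1\ldots y_s$. In the base case $F_0^{\varphi}=\emptyset$, so I would take $\varphi^0(\overline{x}):=\neg(x_1=x_1)$, which uses only variables from $\{v_1,\ldots,v_k\}$ and lies vacuously in $\textrm{FO}^{k+s}$.

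For the inductive step I would strengthen the hypothesis and carry the invariant that every variable (free or bound) occurring in $\varphi^n(\overline{x})$ lies in $\{v_1,\ldots,v_k,y_1,\ldots,y_s\}$. Given such a $\varphi^n(\overline{x})$, first produce an alternative copy $\widetilde{\varphi^n}(\overline{y})$ by swapping the roles of $\overline{x}$ and $\overline{y}$: simultaneously rename every occurrence (free or bound) of $x_i$ in $\varphi^n$ to $y_i$ and every occurrence of $y_i$ to $x_i$. Because renaming is a bijection on the symbol set, $\widetilde{\varphi^n}$ satisfies $\mathcal{A}\vDash\widetilde{\varphi^n}[\overline{a}]$ iff $\overline{a}\in F_n^{\varphi,\mathcal{A}}$. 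By construction its free variables are exactly $\overline{y}$; and since no $y_j$ was free in $\varphi^n$, no $y_j$ is bound in $\widetilde{\varphi^n}$, so all bound variables of $\widetilde{\varphi^n}$ lie in $\{v_1,\ldots,v_k\}$.

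Now I would construct $\varphi^{n+1}(\overline{x})$ by syntactic substitution into $\varphi(X,\overline{x})$: replace each atomic subformula $Xt_1\ldots t_s$ of $\varphi$ (where each $t_j\in\{v_1,\ldots,v_k\}$) by
$$\exists y_1\ldots\exists y_s\Bigl(\bigwedge_{j=1}^s y_j=t_j\;\wedge\;\widetilde{\varphi^n}(y_1,\ldots,y_s)\Bigr).$$
Semantically, $\widetilde{\varphi^n}(y_1,\ldots,y_s)$ holds iff $(y_1,\ldots,y_s)\in F_n^{\varphi,\mathcal{A}}$, and the existential prefix pins the $y_j$'s to the current value of $t_j$, so the replaced atom computes precisely $X t_1\ldots t_s$ under the interpretation $X:=F_n^{\varphi,\mathcal{A}}$. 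Performing this substitution at every atomic occurrence of $X$ in $\varphi$ therefore evaluates $\varphi$ under that interpretation of $X$, which is by definition $F_{n+1}^{\varphi,\mathcal{A}}$. The variables appearing in $\varphi^{n+1}$ are those of the outer $\varphi$ (in $\{v_1,\ldots,v_k\}$) together with $y_1,\ldots,y_s$ and the variables of $\widetilde{\varphi^n}$ (in $\{v_1,\ldots,v_k,y_1,\ldots,y_s\}$), for a total of at most $k+s$ distinct symbols. The free variables are unchanged at $\overline{x}$, so $\varphi^{n+1}\in\textrm{FO}^{k+s}$ and the invariant is preserved.

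The main obstacle is the dual role played by $y_1,\ldots,y_s$ inside the substituted clause: they are simultaneously the free parameters of $\widetilde{\varphi^n}$ and the variables locally rebound by the $\exists y_j$ prefix. A naive direct substitution $Xt_1\ldots t_s\rightsquigarrow\varphi^n(t_1,\ldots,t_s)$ either requires variables beyond the $k+s$ budget (to prevent capture) or silently captures bound occurrences inside $\varphi^n$. The initial swap step is exactly what prevents both: it guarantees that $\widetilde{\varphi^n}$ uses the $y_j$'s only as free placeholders while confining all internal bookkeeping to $\{v_1,\ldots,v_k\}$, so the outer $\exists y_j$'s can bind the parameters to $t_j$ without clashing with anything inside $\widetilde{\varphi^n}$, keeping the variable count inductively at $k+s$.
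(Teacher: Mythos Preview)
Your approach is correct and essentially the same as the paper's: both introduce $s$ fresh variables $y_1,\ldots,y_s$, take $\varphi^0(\overline{x}):=\neg\,x_1{=}x_1$, and inductively replace each $X$-atom by a formula that buffers the argument terms through $\overline{y}$ before invoking the previous stage. The only difference is cosmetic: the paper avoids your swap by using a second layer of rebinding, replacing $X\overline{t}$ with $\exists\overline{y}\bigl(\overline{y}=\overline{t}\wedge\exists\overline{x}(\overline{x}=\overline{y}\wedge\varphi^n(\overline{x}))\bigr)$, so that $\varphi^n$ is reused verbatim rather than in swapped form.

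One small slip worth fixing: the inference ``no $y_j$ was free in $\varphi^n$, hence no $y_j$ is bound in $\widetilde{\varphi^n}$'' is not valid. A bound $y_j$ in $\widetilde{\varphi^n}$ would arise from a bound $x_j$ in $\varphi^n$, and since $x_j\in\{v_1,\ldots,v_k\}$, the original $\varphi$ may well quantify over it. Fortunately this claim is unnecessary for your argument: even if $\widetilde{\varphi^n}$ contains bound occurrences of some $y_j$, the outer $\exists y_j$ in your substitute binds only the free occurrences, the semantics is unaffected, and the variable count still goes through because the swap is a bijection on $\{v_1,\ldots,v_k,y_1,\ldots,y_s\}$.
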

\begin{proof}$\newline$
Let $y_1=v_{k+1},\ldots,y_s=v_{k+s}$. Then $\varphi^n(\overline{x})$ can be defined inductively by
                              $$\varphi^0(\overline{x}) := \neg x_1=x_1,$$
                              $$\varphi^{n+1}(\overline{x}) := \varphi(X,\overline{x})\frac{\exists \overline{y} (\overline{y} = - \wedge \exists \overline{x} (\overline{x} = \overline{y} \wedge \varphi^n(\overline{x})))}{X-}$$
                              (i.e., we replace in $\varphi(X,\overline{x})$ each occurrence of an atomic subformula of the form $X\overline{t}$ by $\exists \overline{y} (\overline{y} = \overline{t} \wedge \exists \overline{x} (\overline{x} = \overline{y} \wedge \varphi^n(\overline{x})))$; note that some variables of $\overline{x}$ may occur in $\overline{t}$).
\end{proof}

Now we prove $\textrm{FO}(\textrm{PFP}) \subseteq \textrm{L}_{\infty\omega}^{\omega}$.
\begin{proof}$\newline$
By Proposition \ref{2.1.2} it suffices to show for first-order $\varphi$ that $[\textrm{PFP}_{X,\overline{x}}\varphi]\overline{t}$ is equivalent to a formula of $\textrm{L}^{\omega}_{\infty\omega}$.$\newline$
So suppose that $\varphi,k,s,X,\overline{x},\overline{y}$ are as in the preceding Lemma. Then $[\textrm{PFP}_{X,\overline{x}}\varphi]\overline{t}$ is equivalent to the $\textrm{L}_{\infty\omega}^{k+s}$-formula (we may assume without loss of generality that the variables in $\overline{t}$ are in $\{x_1,\ldots,x_s\}$)
                          $$\underset{n\geq 0}{\bigvee}(\forall \overline{x} (\varphi^n(\overline{x}) \leftrightarrow \varphi^{n+1}({\overline{x})}) \wedge \varphi^n(\overline{t}))$$
where, to stay within $\textrm{L}_{\infty\omega}^{k+s}$, we take  $\exists \overline{y} (\overline{y} = \overline{t} \wedge \exists \overline{x} (\overline{x} = \overline{y} \wedge \varphi^n(\overline{x})))$ for $\varphi^n(\overline{t})$.
\end{proof}
This, and the idea of the dependence of the depth (or the number of stages to compute the fixed point) on the size of the structure, provoke us to break this $\textrm{L}_{\infty\omega}^{\omega}$-formula into a sequence of first-order formulas - with the same upper bound on the number of variables - in which the number of disjunctions is a function of the size of the structure (Recall the dependence of the number of iterations of the quantifier block on the size of the structure). So for every $n\geq 1$, every structure $\mathcal{A}$ of size $n$ :
      $$\mathcal{A} \vDash  [\textrm{PFP}_{X,\overline{x}}\varphi]\overline{t} \leftrightarrow \overset{t(n)}{\underset{i = 0}{\bigvee}}(\forall \overline{x} (\varphi^i(\overline{x}) \leftrightarrow \varphi^{i+1}({\overline{x})}) \wedge \varphi^i(\overline{t})) $$
       where $t(n)$ is the depth of $\varphi(X,\overline{x})$.

To make this more elaborate we define the following complexity classes:

\begin{definition} \label{3.3.8} $\newline$
Let $\textrm{FO}_{\bigvee}^k[f(n)]$ denote the set of all classes of structures definable by a uniform sequence of $\textrm{FO}^k$-sentences in which the number of disjunctions is $O(f(n))$, more precisely, a class $S$ of structures is in $\textrm{FO}_{\bigvee}^k[f(n)]$ iff there is a sequence $\varphi_1,\varphi_2,\ldots$ of $\textrm{FO}^k$-sentences such that :
\begin{enumerate}
\item[(1)]For every positive integer $n$, every structure $\mathcal{A}$ of size $n$ :

              $$\mathcal{A} \in S \;\;\; \text{iff} \;\;\; \mathcal{A} \vDash \varphi_n.$$
\item[(2)]There is a constant number $c$ such that, for every positive integer $n$, the number of $\vee$-symbols in $\varphi_n$ is at most $cf(n)$ (assuming that the formulas are expressed using $\vee$'s and $\neg$'s only).
\item[(3)] The map $n \mapsto \varphi_n$ is generable by a $\textrm{DSPACE}[f(n)]$ Turing machine.
\end{enumerate}
\end{definition}
The number of $\vee$-symbols in the formula $\varphi^0(\overline{x})$ is $0$, and their number in $\varphi^1(\overline{x})$ is $l+2m$ where $l$ is the number of $\vee$-symbols and $m$ is the number of occurrences of $X$ in $\varphi(X,\overline{x})$ (Note that the formula$\newline$ $\exists \overline{y} (\overline{y} = \overline{t} \wedge \exists \overline{x} (\overline{x} = \overline{y} \wedge \varphi^0(\overline{x})))$ with which the subformulas $X\overline{t}$ are replaced contain exactly two $\wedge$'s and no other binary connectives and hence, it contains exactly two $\vee$'s when it is written using only $\vee$'s and $\neg$'s). It can be easily seen that the number of $\vee$-symbols in $\varphi^2(\overline{x})$ is $l+m(2+l+2m)$ i.e. $l+(l+2)m+2m^2$, and in $\varphi^3(\overline{x})$ is $l+m(2+l+(l+2)m+2m^2)$ i.e. $l+(l+2)m+(l+2)m^2+2m^3$, and then by induction the number of $\vee$-symbols in $\varphi^i(\overline{x})$ is $\newline$ $l+(l+2)m+(l+2)m^2+\ldots +(l+2)m^{i-1}+2m^i$ for any $i\geq 4$.
 $\newline\newline$
 Let $h(i)$ denote the number of $\vee$-symbols in $\varphi^i(\overline{x})$. Then the number of $\vee$-symbols in the formula $$\varphi_n :=\overset{t(n)}{\underset{i = 0}{\bigvee}}(\forall \overline{x} (\varphi^i(\overline{x}) \leftrightarrow \varphi^{i+1}({\overline{x})}) \wedge \varphi^i(\overline{t}))$$ is $\overset{t(n)}{\underset{i=0}{\Sigma}}(4+2h(i)+h(i+1))$ i.e. $\newline$

  $ (4+l+(3l+4)t(n))+(8+l+(3l+6)(t(n)-1)) m + \newline (8+l+(3l+6)(t(n)-2)) m^2 + (8+l+(3l+6)(t(n)-3)) m^3 + \ldots + \newline (8+l+(3l+6)) m^{t(n)-1} + (8+l) m^{t(n)}+2 m^{t(n)+1}$,

  $\newline$
  Call this $f(n)$.
  (Note that $f(n)= \Theta(2^{ct(n)})$ for some constant number $c$ if $m\geq 2$). It is not hard to see from the construction of the formulas $\varphi_n$ that their generation does not need space more than $O(f(n))$, thus, the class of structures definable by $[\textrm{PFP}_{X,\overline{x}}\varphi]\overline{t}$ (under some interpretation of $\overline{t}$) is in $\textrm{FO}_{\bigvee}^{k+s}[f(n)]$. Hence $\textrm{FO}(\textrm{PFP})\subseteq \underset{j,k\geq 1}{\bigcup}FO_{\bigvee}^k[2^{O(2^{n^j})}]$ (Recall that if the relation variable through which the fixed-point is built is of arity $j$ then it takes at most $2^{n^j}$ stages to  come to the fixed-point if we do not restrict the formula to be positive in the relation variable, so the depth $t(n)$ of any formula is $O(2^{n^j})$ for some $j$).
$\newline$
 We think that the number of $\vee$-symbols is worth studying as a complexity measure. Also it is important to investigate the relationship between $\textrm{IND}[t(n)]$ (or $\textrm{FO}[t(n)]$) and $\textrm{FO}_{\bigvee}^{O(1)}[2^{O(t(n))}]$. $\newline$
Likewise we can also define the complexity classes $\textrm{FO}_{\bigvee}[f(n),g(n)]$ in which the number of variables depends on the size of the structure :

\begin{definition}
Fix a vocabulary $\sigma$. Assume that formulas are expressed using $\vee$'s and $\neg$'s only. We say that a class, $S$, of $\sigma$-structures is in $\textrm{FO}_{\bigvee}[f(n),g(n)]$ if there exists a sequence of sentences $\{\varphi_i \; | \; i= 1,2,\ldots \}$ from $\textrm{FO}[\sigma]$, and constant numbers, $k,l$, such that :
\begin{enumerate}
\item[(1)] For all $\sigma$-structures, $\mathcal{A}$, if $||\mathcal{A}||=n$, then :
                                      $$\mathcal{A} \in S \;\;\;\;\; \text{iff} \;\;\;\;\; \mathcal{A}\vDash \varphi_n.$$
\item[(2)] $\varphi_n$ has $\leq kf(n)$ $\vee$-symbols and uses $\leq l g(n)$ distinct variables (free and bound).
\item[(3)] The map $n \mapsto \varphi_n$ is generable by a $\textrm{DSPACE}[f(n)+g(n)]$ Turing machine.
\end{enumerate}
\end{definition}

 \begin{theorem} \label{3.3.10}
 For $f(n) \geq \log{n}$, (under Proviso \ref{2.2}), $$\textrm{NSPACE}[f(n)] \subseteq \textrm{FO}_{\bigvee}[f(n)+\left(\frac{f(n)}{\log{n}}\right)^2,\frac{f(n)}{\log{n}}]$$ In particular, $$\textrm{NL} \subseteq \textrm{FO}_{\bigvee}[\log{n},O(1)]$$
 \end{theorem}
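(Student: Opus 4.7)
The plan is to simulate the $f(n)$-space nondeterministic Turing machine $M$ accepting the given class by writing, for each $n$, a first-order sentence that expresses reachability in the configuration graph of $M$ on input $bin(\mathcal{A})$, using a Savitch-style doubling trick together with aggressive alpha-reuse of variable names. Let $k := \lceil f(n)/\log n\rceil$. A configuration of $M$ is determined by $O(f(n))$ bits (state, work-tape contents, input- and work-head positions), so since $f(n)\geq \log n$ we can encode it as a $k$-tuple $\overline{c}\in A^k$; the configuration graph then has at most $n^{O(k)}=2^{O(f(n))}$ vertices. Using the ordering and $\textrm{BIT}$ (Proviso \ref{2.2}) I write a first-order formula $R(\overline{x},\overline{y})$ expressing ``$\overline{y}$ is a one-step successor of $\overline{x}$, or $\overline{x}=\overline{y}$''; its dominant cost is a universal quantification over tape positions to check agreement of non-accessed cells, where the case analysis over which component of the $k$-tuple holds each cell gives $O(k)$ variables and $O(k^2)$ occurrences of $\vee$.

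To express reachability of length at most $2^t$ I use the doubling recursion with a universal midpoint:
\[
P_0(\overline{a},\overline{b}) := R(\overline{a},\overline{b}),\qquad P_{i+1}(\overline{a},\overline{b}) := \exists \overline{z}\,\forall \overline{u}\,\forall \overline{v}\,\Bigl[\bigl((\overline{u}=\overline{a}\wedge \overline{v}=\overline{z})\vee(\overline{u}=\overline{z}\wedge \overline{v}=\overline{b})\bigr) \rightarrow P_i(\overline{u},\overline{v})\Bigr].
\]
Since $P_i$ occurs only once on the right-hand side, each recursion step adds only a constant number of $\vee$-symbols, so unfolding to depth $t = O(f(n))$ (enough to cover the $2^{O(f(n))}$ configurations) contributes only $O(f(n))$ extra disjunctions.

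The main difficulty is keeping the variable count down to $O(k)$: a naive unfolding introduces three fresh $k$-tuples $\overline{z},\overline{u},\overline{v}$ at every level. The critical observation is that inside the call $P_i(\overline{u},\overline{v})$ the outer names $\overline{a},\overline{b},\overline{z}$ are not referenced -- they appear only in the antecedent, which lies outside the scope of the inner $P_i$ -- so the bound tuples of $P_i$ may be alpha-renamed to shadow them. Alternating the roles of $(\overline{x},\overline{y})$ and $(\overline{u},\overline{v})$ between consecutive levels, while reusing $\overline{z}$ at every level, lets the entire unfolding of $P_t$ use only the five $k$-tuples $\overline{x},\overline{y},\overline{z},\overline{u},\overline{v}$, together with the $O(k)$ auxiliary variables internal to $R$ at the bottom of the recursion, for a total of $O(f(n)/\log n)$ distinct first-order variables.

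Putting the pieces together, assuming $M$ has been normalized to possess a single accepting configuration $\overline{e}$, take $\varphi_n := P_t(\overline{s},\overline{e})$, where $\overline{s}$ is the constant initial configuration tuple. Then $\varphi_n$ holds on $\mathcal{A}$ iff $M$ accepts $bin(\mathcal{A})$, contains $O\bigl(f(n)+(f(n)/\log n)^2\bigr)$ occurrences of $\vee$, uses $O(f(n)/\log n)$ variables, and is produced from $n$ by a straight-line unfolding of constant-size templates of length $O(f(n))$, well within $\textrm{DSPACE}[f(n)]$. This gives the inclusion; the special case $f(n)=\log n$ yields $\textrm{NL}\subseteq \textrm{FO}_{\bigvee}[\log n, O(1)]$.
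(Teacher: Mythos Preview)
Your proposal is correct and follows essentially the same route as the paper's proof: encode configurations of the $f(n)$-space machine as $\Theta(f(n)/\log n)$-tuples, express the one-step relation by a formula whose dominant cost is the $O(k^2)$ disjunctions arising from the case split over which tuple-component holds the accessed work-tape cell, and then use the Savitch-style doubling template with the single recursive occurrence so that five $k$-tuples (your $\overline{x},\overline{y},\overline{z},\overline{u},\overline{v}$) suffice for the whole unfolding. The paper makes exactly these moves, with the same variable-reuse observation and the same bookkeeping for the $\vee$-count and the uniform generation of $\varphi_n$.
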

 \begin{proof} (In this proof when we write an expression of functions, for example $\log{n}$ or $\frac{f(n)}{\log{n}}$, we mean its ceiling).
 Let $S\subseteq \textrm{STRUC}[\sigma]$ be a boolean query in $\textrm{NSPACE}[f(n)]$. Let $M$ be a nondeterministic $f(n)$-space machine that accepts $S$, i.e.,
 $$\mathcal{A}\in S  \;\;\;\;\; \text{iff} \;\;\;\;\; M(bin(\mathcal{A}))\downarrow.$$
  Let $m$ be such that $M$ uses at most $mf(n)$ bits of work-tape for inputs of size $n$. Let $\sigma = \{R_1,\ldots,R_r,c_1,\ldots,c_l\}$ where each $R_i$ is of arity $a_i$ and let $a=max\{a_i \; | \; 1\leq i \leq r\}$. Let $h(n)=m\left(\frac{f(n)}{\log{n}}\right)$, $t(n)= \frac{\log{(h(n))}}{\log{n}}$, and $g(n)=4+a+h(n)+t(n)$. Let $\mathcal{A}$ be a $\sigma$-structure of size $n$. A configuration, or an instantaneous description, of $M$'s computation on $\mathcal{A}$, can be coded as a $g(n)$-tuple of variables :
  $$(q,w_1,\ldots,w_{h(n)},s,r_1,\ldots,r_a,p,v_1,\ldots,v_{t(n)},v)$$
  The variable $q$ encodes the state of the machine.The variables $w_1,\ldots,w_{h(n)}$ encode the contents of $M$'s work-tape. Remember that each variable represents an element of $\mathcal{A}$'s $n$-element universe, so it corresponds to a $\log{n}$-bit number. The variable $s$ determines at which relation, constant, or otherwise, the input-head is looking. The variables $r_1,\ldots,r_a$ encode where in one of the input relations the input-head is looking, if it is looking at a relation. The variable $p$ determines the number of the bit being read in one of the input constants if the input-head is looking at a constant. The variables $v_1,\ldots,v_{t(n)}$ encode the index of the variable where the work-head is looking. (Note that $t(n)$ variables together can name $n^{t(n)}$, i.e. $h(n)$, things). The variable $v$ determines the number of the bit being read in a variable of $w_1,\ldots,w_{h(n)}$.
   $\newline$ We may assume without loss of generality that $(0,0,\ldots,0)$ is the unique start configuration of $M$ and $(max,max,\ldots,max)$ is its unique accept configuration. We denote the configuration $(0,\ldots,0)$ by $\overline{0}$ and $\newline(max,\ldots,max)$ by $\overline{max}$. $\newline$
 Thus the size of the configuration graph for an input structure of size $n$ is at most $n^{g(n)}$ ("at most" because not all tuples necessarily represent configurations). $\mathcal{A}\in S$ if and only if $M(bin(\mathcal{A}))\downarrow$ if and only if there is a path from $\overline{0}$ to $\overline{max}$ in the configuration graph of $M$'s computation on $\mathcal{A}$, i.e., if and only if the configuration graph satisfies the sentence $\varphi_{g(n) \log{n}}[\overline{0},\overline{max}]$ where the formulas $\varphi_i(x,y)$ are defined inductively as follows : $\newline$
  $\varphi_0(x,y) := E xy$ and, $\newline\varphi_{i+1}(x,y) := \exists z \forall u \forall v ( ((u=x \wedge v=z)\vee(u=z \wedge v=y)) \rightarrow \varphi_i(u,v))$. It can be easily proved by induction that the number of $\vee$-symbols in $\varphi_i$ is $4 i$ and that it expresses the existence of a path from $x$ to $y$ of length $\leq 2^i$. Hence, in particular, $\varphi_{g(n) \log{n} }[\overline{0},\overline{max}]$ has $4 g(n) \log{n} $ ($=\Theta(f(n))$ as a function of $n$) $\vee$-symbols and expresses the existence of a path from $\overline{0}$ to $\overline{max}$ of length $\leq 2^{g(n) \log{n} }$, i.e. $n^{g(n)}$, in the configuration graph.$\newline$ Since the configuration graph is of size at most $n^{g(n)}$, then $\varphi_{g(n) \log{n} }[\overline{0},\overline{max}]$ expresses the existence of a path in general from $\overline{0}$ to $\overline{max}$ in it. It remains now to retrieve from $\varphi_{g(n) \log{n} }[\overline{0},\overline{max}]$ the corresponding sentence in the vocabulary of $\mathcal{A}$ and show that the number of $\vee$-symbols in it is $O(f(n)+\left(\frac{f(n)}{\log{n}}\right)^2)$ and the number of variables is $O(\frac{f(n)}{\log{n}})$. (Note that any $\varphi_i$ contains only $5$ variables $x,y,z,u,v$). We should show that this sentence depends only on the size of $\mathcal{A}$ but not on $\mathcal{A}$ itself. Then we should show that the sequence of sentences we have is generable by a deterministic $f(n)+\left(\frac{f(n)}{\log{n}}\right)^2$-space machine. $\newline$ This sentence is obtained from $\varphi_{g(n) \log{n} }[\overline{0},\overline{max}]$ by replacing the unique occurrence of an atomic subformula of the relation $E$ with the formula $\psi_E(q,\overline{w},s,\overline{r},p,\overline{v},v,q^{\prime},\overline{w^{\prime}},s^{\prime},\overline{r^{\prime}},\overline{v^{\prime}},v^{\prime})$ expressing that there is an edge from $(q,\overline{w},s,\overline{r},p,\overline{v},v)$ to $(q^{\prime},\overline{w^{\prime}},s^{\prime},\overline{r^{\prime}},\overline{v^{\prime}},v^{\prime})$ in the configuration graph, and replacing each variable by a $g(n)$-tuple of variables. We do not need to restrict quantifiers so that their variables represent configurations because the unique occurrence of $\psi_E$ in the core of the sentence (note that $E$ occurs only once in the core of $\varphi_i$) is already restricting the vertices on the path claimed by $\varphi_{g(n) \log{n} }[\overline{0},\overline{max}]$ to represent configurations. This occurrence of $\psi_E$ ensure that the transition function of the machine can make it go in one step from configuration $\overline{0}$ to the situation represented by the next vertex on the path, and hence this vertex represents a configuration, and so on up to the end of the path.$\newline$
  There is an edge from $(q,\overline{w},s,\overline{r},p,\overline{v},v)$ to $(q^{\prime},\overline{w^{\prime}},s^{\prime},\overline{r^{\prime}},\overline{v^{\prime}},v^{\prime})$ in the configuration graph of $M$'s computation on $\mathcal{A}$ if and only if $M$ can go in one step from the configuration represented by $(q,\overline{w},s,\overline{r},p,\overline{v},v)$ to the configuration represented by $(q^{\prime},\overline{w^{\prime}},s^{\prime},\overline{r^{\prime}},\overline{v^{\prime}},v^{\prime})$. $\newline\psi_E$ is a disjunction over $M$'s finite transition table. A typical entry in the transition table is of the form $((x,b,w),(x^{\prime},i_d,w^{\prime},w_d))$. This says that in state $x$, looking at a bit that equals $b$ with the input-head and a bit that equals $w$ with the work-head, $M$ may go to state $x^{\prime}$, move its input-head one step in direction $i_d$, write $w^{\prime}$ in the cell where it is looking in the work-tape and move its work-head one step in direction $w_d$. Let us describe the disjunct corresponding to $((x,b,w),(x^{\prime},i_d,w^{\prime},w_d))$ if $b=1$, $w=0$, $w^{\prime}=1$, $i_d$ is right, and $w_d$ is left. This disjunct is the conjunction of :
   \begin{enumerate}
   \item[(1)] $\psi_1$ : A formula saying that $q$ equals $x$ and $q^{\prime}$ equals $x^{\prime}$.
   \item[(2)] $\psi_2$ : A formula saying that the input-head was reading a $1$ in some cell and is now looking at the next cell.
   \item[(3)] $\psi_3$ : A formula saying that the work-head was reading $0$ in some cell and now this cell contains $1$ and the contents of all the other cells are unchanged, and the work-head is now looking at the previous cell.
   \end{enumerate}
   $\psi_1$ is the conjunction of the formulas $\textrm{BIT}(q,i)$ for the bits $i$ which are $1$ in $x$, and the formulas $\neg \textrm{BIT}(q,i)$ for the bits $i$ which are $0$ in $x$, and the formulas $\textrm{BIT}(q^{\prime},i)$ for the bits $i$ which are $1$ in $x^{\prime}$, and the formulas $\neg \textrm{BIT}(q^{\prime},i)$ for the bits $i$ which are $0$ in $x^{\prime}$. Since the number of states of the machine is finite, $t$ say, then any state can be coded in $\log{t}$ bits, and hence this formula contains $2 \log{t}$ conjunctions ($\log{t}$ for $q$ and $\log{t}$ for $q^{\prime}$). Hence this part of $\psi_E$ contributes a constant number of $\vee$-symbols that is independent of the size of the structure. $\newline$
   $\psi_2$ is a finite disjunction (independent of the size of the structure), since the number of relation symbols and constant symbols is finite, considering the different cases for the position of the input head:
   \begin{enumerate}
   \item[(a)] It was looking at some relation at a tuple before the last and is now looking at the next tuple of the same relation. This is determined by the values of $s,s^{\prime},r_1,\ldots,r_a$, and $r_1^{\prime},\ldots,r_a^{\prime}$. (If it was looking at relation $R_i$ then that it was reading a $1$ is expressed by $R_i r_1\ldots r_{a_i}$).
   \item[(b)] It was looking at the last tuple of a relation and is now looking at the first tuple of the next relation or at the first bit of $c_1$. This is also determined by the values of $s,s^{\prime},r_1,\ldots,r_a$, and $r_1^{\prime},\ldots,r_a^{\prime}$, plus possibly $p$.
   \item[(c)] It was looking at a constant at a bit before the last and is now looking at the next bit of the same constant, or it was looking at the last bit of a constant and is now looking at the first bit of the next constant, or is now outside the range of bits of the input from the right. This is determined by the values of $s,s^{\prime},p,$ and $p^{\prime}$. (If it was looking at the constant $c_i$ then that it was reading a $1$ is expressed by $\textrm{BIT}(c_i,p)$).
   \end{enumerate}
   $\psi_3$ is a disjunction depending on the size of the structure : $\newline$
   $$\underset{i=1}{\overset{h(n)}{\bigvee}} (\neg \textrm{BIT}(w_i,v) \wedge \textrm{BIT}(w_i^{\prime},v) \wedge v^{\prime} = v-1 \wedge (\underset{j;j\neq i}{\bigwedge} w_j =w_j^{\prime}) \wedge$$ $$\forall x (x \neq v \rightarrow (\textrm{BIT}(w_i,x) \leftrightarrow \textrm{BIT}(w_i^{\prime},x))) \wedge (\overset{t(n)}{\underset{j=1}{\bigwedge}} v_j=v^{\prime}_j)) $$
   This is the disjunction for the case when the work-head is looking in a variable of $w_1,\ldots,w_{h(n)}$ at a bit other than the first bit of it, the other cases can be treated similarly. This contributes a number of $\Theta((h(n))^2)$ $\vee$-symbols to $\psi_E$.
 $\newline$  Noting that $E$ occurs only once in any of the formulas $\varphi_i$, then the number of $\vee$-symbols in the $n$-th sentence is $\Theta(g(n)\log{n}+(h(n))^2)$, i.e., $\newline\Theta(f(n)+\left(\frac{f(n)}{\log{n}}\right)^2)$. The variables used in the formula $\psi_E$ are $x,q,w_1,\ldots,\newline w_{h(n)},s,r_1,\ldots,r_a,p,v_1,\ldots,v_{t(n)},v, q^{\prime},w_1^{\prime},\ldots, w_{h(n)}^{\prime},s^{\prime},r_1^{\prime},
        \ldots,r_a^{\prime},p^{\prime},\newline v_1^{\prime},\ldots,v_{t(n)}^{\prime},v^{\prime}$, i.e., $\Theta(\frac{f(n)}{\log{n}})$ variable, and the formulas $\varphi_i$ use only a finite number of variables, five, and $E$ occurs only once in it, hence the number of variables in the resulting $n$-th sentence is $\Theta(\frac{f(n)}{\log{n}})$.$\newline$ It can be easily seen that this $n$-th formula depends only on the size of $\mathcal{A}$ and not on $\mathcal{A}$ itself.
        Again from the construction of the formulas $\varphi_i$ and the fact that $E$ occurs once in any of them, it can be easily seen that it takes $\Theta(f(n)+\left(\frac{f(n)}{\log{n}}\right)^2)$-space to generate the $n$-th sentence.
 \end{proof}

 \begin{theorem}\label{3.3.11} For any functions $f(n),g(n)$, (under Proviso \ref{2.2}),
 $$\textrm{FO}_{\bigvee}[f(n),g(n)]\subseteq \textrm{DSPACE}[f(n)g(n)\log{n}]$$
 \end{theorem}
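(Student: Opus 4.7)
The plan is to construct a deterministic Turing machine $M$ that, on input $bin(\mathcal{A})$ with $n=\|\mathcal{A}\|$, decides $\mathcal{A}\vDash\varphi_n$ via recursive model checking on $\varphi_n$, and to show the computation fits in $O(f(n)g(n)\log n)$ space. First $M$ would compute $n$ from the input length and allocate three workspaces: a region of size $O(g(n)\log n)$ storing a current assignment $\alpha$ to the $\le lg(n)$ distinct variables of $\varphi_n$; a scratchpad of size $O(f(n)+g(n))$ used to simulate the promised $\textrm{DSPACE}[f(n)+g(n)]$ generator, so that the symbol at any requested position of $\varphi_n$ can be produced on demand; and the remaining space as a recursion stack.

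The evaluation itself is the textbook structural recursion on $\varphi_n$: atomic subformulas are decided directly from $\mathcal{A}$ and $\alpha$; $\neg\chi$ recurses on $\chi$ and flips the result; $\chi_1\vee\chi_2$ evaluates $\chi_1$ first, returning true if $\chi_1$ holds and otherwise recursively evaluating $\chi_2$; and $\exists x\,\chi$ saves $\alpha(x)$, iterates over $a\in A$ setting $\alpha(x):=a$ and recursing on $\chi$, then restores $\alpha(x)$ and returns the disjunction of the outcomes. Each $\vee$-frame on the stack carries a pointer into $\varphi_n$ and a single truth bit; each $\exists$-frame carries the saved old value of the quantified variable and the current candidate together with its position pointer.

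The main obstacle is bounding stack depth times per-frame cost by $O(f(n)g(n)\log n)$. The plan is to view $\varphi_n$ through its $\vee$-skeleton, a tree whose internal nodes are the $\le kf(n)$ disjunctions of $\varphi_n$ and whose leaves are $\vee$-free subformulas, each a chain of $\neg$'s and $\exists$'s ending in a literal. Consecutive $\neg$'s collapse into a parity bit carried with the cursor and contribute no frames, so the skeleton has depth at most $kf(n)$; this bounds the number of live $\vee$-frames by $O(f(n))$, and each such frame costs $O(\log|\varphi_n|)=O(f(n)+g(n))$ bits for its position pointer plus $O(1)$ for the truth bit. Each $\vee$-free leaf-chain is evaluated iteratively by a sequence of nested loops whose loop counters and saved variable values are stored directly within the $O(g(n)\log n)$ assignment region, so that the chain contributes no additional frames beyond what is already budgeted. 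Assembling the contributions — $O(g(n)\log n)$ for $\alpha$, $O(f(n)+g(n))$ for the generator scratchpad, and $O(f(n)(f(n)+g(n)))$ for the $\vee$-frames, all of which are dominated by $f(n)g(n)\log n$ in the relevant regime — gives the claimed bound. Combined with Theorems \ref{2.2.11} and \ref{3.3.10}, this yields as a corollary $\bigcup_{k\geq 1}\textrm{FO}_{\bigvee}[n^k+(n^k/\log n)^2,\,n^k/\log n]=\textrm{FO}[2^{n^{O(1)}}]$.
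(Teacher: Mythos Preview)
Your approach differs from the paper's.  The paper does not model-check directly; it first proves a normalization lemma --- every first-order formula with $k$ occurrences of $\vee$ and $m$ distinct variables is equivalent to one in which the number of quantifiers plus free variables is at most $m(k+1)$ --- and then observes that $S\in\textrm{FO}_{\bigvee}[f(n),g(n)]$ yields $S\in\textrm{QN}[f(n)g(n)]$, finishing by citing the inclusion $\textrm{QN}[q(n)]\subseteq\textrm{DSPACE}[q(n)\log n]$ from \cite{NumQuan}.

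Your direct evaluation scheme is natural, but the space accounting has a real gap.  You assert that the $\exists$-loops of each $\vee$-free chain fit inside the single $O(g(n)\log n)$ assignment region.  Consider a formula of the shape $\exists x\bigl(\exists x(\cdots(\exists x\,\psi_0\vee\psi_1)\cdots)\vee\psi_k\bigr)$ with $k$ nested copies of $\exists x$, each separated from the next by a $\vee$ whose right branch $\psi_i$ has $x$ free.  While the innermost branch is being evaluated, all $k$ outer values of $x$ must be retained so that on backtracking each $\psi_i$ sees the correct binding; one slot for $x$ cannot hold them.  In general the number of simultaneously live quantifier frames along a root-to-leaf path is $\Theta(f(n)g(n))$, not $O(g(n))$ --- and bounding it by $O(f(n)g(n))$ already requires the vacuous-quantifier elimination that is precisely the content of the paper's lemma.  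A clean repair is to have each $\vee$-frame additionally snapshot the full current assignment: $O(f(n))$ frames times $O(g(n)\log n)$ bits gives the target $O(f(n)g(n)\log n)$, but this is not the accounting you wrote.

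A smaller issue: your stated $\vee$-stack cost $O\bigl(f(n)(f(n)+g(n))\bigr)$ is not dominated by $f(n)g(n)\log n$ for arbitrary $f,g$ (take $g\equiv 1$, $f(n)=n$), so ``in the relevant regime'' does not cover the theorem as stated.  Storing only the left/right path through the $\vee$-skeleton, $O(f(n))$ bits in total, rather than a full position pointer per frame, removes this defect.
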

 For a proof, we need first to prove the following lemma :

 \begin{lemma}
 Every first-order formula with $k$ $\vee$-symbols and using $m$ variables is equivalent to a first-order formula in which the number of quantifiers plus the number of free variables is at most $m(k+1)$.
 \end{lemma}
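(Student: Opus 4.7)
The plan is to prove the lemma by structural induction on $\varphi$, rewriting it as we go. The critical observation is that we are allowed to replace $\varphi$ by \emph{any} equivalent formula, so whenever an inductive step produces a vacuous quantifier $\exists x$ (with $x$ not free in the subformula), we may simply delete it. This gives us the slack needed to bound the count.

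For the base case, when $\varphi$ is atomic we have $k=0$ and no quantifiers, and the number of free variables is at most the total number of variables used, namely $m$. Thus the required bound $m(k+1)=m$ holds with $\varphi':=\varphi$. For $\varphi = \neg\psi$, the induction hypothesis on $\psi$ (which has the same $k,m$) yields $\psi'\equiv\psi$ with the bound, and $\varphi':=\neg\psi'$ has exactly the same quantifier count and free-variable set as $\psi'$.

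For $\varphi = \psi \vee \chi$, let $k_1,k_2$ be the $\vee$-counts and $m_1,m_2\le m$ the variable counts of $\psi,\chi$, so $k_1+k_2+1 = k$. By the induction hypothesis, $\psi\equiv\psi'$ and $\chi\equiv\chi'$ with (quantifiers $+$ free variables) bounded by $m(k_1+1)$ and $m(k_2+1)$ respectively. Setting $\varphi' := \psi'\vee\chi'$, the total number of quantifiers is additive, and using $|\mathrm{free}(\psi')\cup\mathrm{free}(\chi')|\le |\mathrm{free}(\psi')|+|\mathrm{free}(\chi')|$, one obtains
$$(\text{quant}+\text{free})(\varphi') \;\le\; m(k_1+1)+m(k_2+1) \;=\; m(k+1).$$
For $\varphi = \exists x\,\psi$, apply the induction hypothesis to $\psi$ (same $k$, at most $m$ variables) to get $\psi'\equiv\psi$ with the bound $m(k+1)$. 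If $x\in\mathrm{free}(\psi')$, set $\varphi':=\exists x\,\psi'$: this adds one quantifier but removes $x$ from the free-variable set, leaving the sum unchanged. If $x\notin\mathrm{free}(\psi')$, then $\exists x\,\psi'\equiv\psi'$, so set $\varphi':=\psi'$; the sum can only decrease. In either case the bound persists.

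The argument is almost purely bookkeeping; the only real pitfall is the $\exists$-case, where one must notice that rewriting $\psi$ inside the quantifier can make $x$ vanish from the free variables of the rewrite, and that the bound only holds because we are permitted to delete vacuously quantified variables. This is also what prevents pathological formulas such as $\exists x\,\exists x\,\cdots\,\exists x\, P(x)$ from breaking the bound: after rewriting, only the innermost meaningful $\exists x$ survives.
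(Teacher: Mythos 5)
Your proof is correct, and it reaches the bound by a genuinely different decomposition than the one in the paper. The paper inducts on the number $k$ of $\vee$-symbols: it first rewrites the formula in the normal form $Q_{i_1}x_{i_1}\ldots Q_{i_l}x_{i_l}(\neg)(\varphi\vee\psi)$, i.e.\ it strips the entire quantifier/negation prefix down to the outermost disjunction in one step, applies the induction hypothesis to the two disjuncts, and then charges the $l$ new quantifiers against the $l$ free variables they bind. You instead run a structural induction on the formula tree and handle one connective at a time, with the same underlying accounting identity (each non-vacuous quantifier adds one to the quantifier count and removes one from the free-variable count, so the sum is invariant; disjunction adds the two bounds and absorbs the extra $+1$ in the $\vee$-count). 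Your route buys two things: you never need to justify the normal-form claim (that every formula with at least one $\vee$ can be massaged into a prefix followed by a top-level disjunction), and you deal explicitly with vacuous and repeated quantifications such as $\exists x\,\exists x\,P(x)$, which is a point the paper's proof glosses over --- its step ``the number of free variables in $\theta$ is $\leq r'+s'-l$'' tacitly assumes the $l$ quantified variables are pairwise distinct and all occur free in the matrix, which is only safe after deleting vacuous quantifiers exactly as you do. The paper's version is more compact; yours is the more airtight bookkeeping.
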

 \begin{proof}
 By induction on $k$. When the number of $\vee$-symbols is zero, the formula is something of the form : $\newline$
     $Q_{i_1}x_{i_1}\ldots Q_{i_l}x_{i_l} Rx_1\ldots x_m$, or $Q_{i_1}x_{i_1}\ldots Q_{i_l}x_{i_l} \neg Rx_1\ldots x_m$. In each one of these formulas the number of quantifiers plus the number of free variables is $m$. $\newline$
      For some $k\geq 0$, suppose the statement is true for $\textrm{FO}^m$-fromulas with $\leq k$ $\vee$-symbols, and consider an $\textrm{FO}^m$-formula with $k+1$ $\vee$-symbols. Such a formula is equivalent to a formula $\theta$ of the form $Q_{i_1}x_{i_1}\ldots Q_{i_l} x_{i_l} (\neg)(\varphi \vee \psi)$, where $(\neg)$ means that there may be or may be not a negation, and where $\varphi$ is a formula with $r$ $\vee$-symbols and $r^{\prime}$ free variables, and $\psi$ is a formula with $s$ $\vee$-symbols and $s^{\prime}$ free variables, and $r+s=k$. By the induction hypothesis, the number of quantifiers plus the number of free variables in $\varphi$ is $\leq m(r+1)$, and the number of quantifiers plus the number of free variables in $\psi$ is $\leq m(s+1)$. Thus, the number of quantifiers in $\varphi$ is $\leq m(r+1)-r^{\prime}$ and the number of quantifiers in $\psi$ is $\leq m(s+1)-s^{\prime}$. Hence the number of quantifiers in $\theta$ is $\leq l+m(r+1)-r^{\prime}+m(s+1)-s^{\prime}$. The number of free variables in $\theta$ is $\leq r^{\prime}+s^{\prime}-l$. Thus, the number of quantifiers plus the number of free variables in $\theta$ is at most $l+m(r+1)-r^{\prime}+m(s+1)-s^{\prime}+r^{\prime}+s^{\prime}-l$, i.e., $m (r+s+2)$, i.e., $m(k+2)$.
 \end{proof}

 Now we prove Theorem \ref{3.3.11}:
 \begin{proof}
  let $S$ be in $\textrm{FO}_{\bigvee}[f(n),g(n)]$, then $S$ is definable by a sequence of sentences $\varphi_1,\varphi_2,\ldots$ generable by a $\textrm{DSPACE}[f(n)+g(n)]$ machine such that $\varphi_n$ has $O(f(n))$ $\vee$-symbols and uses $O(g(n))$ variables. Hence, by the previous lemma, $\varphi_n$ has $O(f(n)g(n))$ quantifiers, i.e. $S$ is in $\textrm{QN}[f(n)g(n)]$ ($\textrm{QN}$ denotes Number of Quantifiers, cf. Immerman's paper "Number of Quantifiers is Better than Number of Tape Cells" \cite{NumQuan}). The proof of the second inclusion of Theorem 2 in that paper implies that $S$ is in $\textrm{DSPACE}[f(n)g(n)\log{n}]$.
 \end{proof}

 As a direct corollary of these two theorems, \ref{3.3.10} and \ref{3.3.11}, we have,
 \begin{corollary}\label{3.3.13}
  $$\underset{k\geq 1}{\bigcup} \textrm{FO}_{\bigvee}[n^k+\left(\frac{n^k}{\log{n}}\right)^2,\frac{n^k}{\log{n}}] = \textrm{FO}[2^{n^{O(1)}}] = \textrm{PSPACE} = \textrm{FO}(\textrm{PFP})$$
 \end{corollary}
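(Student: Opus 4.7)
The plan is to observe that the equalities $\textrm{FO}[2^{n^{O(1)}}] = \textrm{PSPACE} = \textrm{FO}(\textrm{PFP})$ are already established as Theorem \ref{2.2.11}, so it suffices to prove
$$\underset{k\geq 1}{\bigcup} \textrm{FO}_{\bigvee}[n^k+\left(\tfrac{n^k}{\log{n}}\right)^2,\tfrac{n^k}{\log{n}}] \;=\; \textrm{PSPACE}.$$
Both inclusions will follow directly by invoking Theorems \ref{3.3.10} and \ref{3.3.11}, together with the trivial observation that $\textrm{PSPACE} = \bigcup_{j\geq 1}\textrm{DSPACE}[n^j] \subseteq \bigcup_{j\geq 1}\textrm{NSPACE}[n^j]$.

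For the inclusion $\textrm{PSPACE} \subseteq \bigcup_{k\geq 1}\textrm{FO}_{\bigvee}[\cdots]$, I would start with an arbitrary boolean query $S \in \textrm{PSPACE}$, pick $j$ with $S \in \textrm{DSPACE}[n^j] \subseteq \textrm{NSPACE}[n^j]$, and note that $n^j \geq \log n$ for all sufficiently large $n$, so Theorem \ref{3.3.10} applies with $f(n) = n^j$ and yields
$$S \in \textrm{NSPACE}[n^j] \;\subseteq\; \textrm{FO}_{\bigvee}\!\left[n^j+\left(\tfrac{n^j}{\log{n}}\right)^2,\tfrac{n^j}{\log{n}}\right],$$
which is one of the terms of the union (taking $k=j$).

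For the reverse inclusion, fix $k \geq 1$ and put $f(n) := n^k + (n^k/\log n)^2$ and $g(n) := n^k/\log n$. Theorem \ref{3.3.11} gives $\textrm{FO}_{\bigvee}[f(n),g(n)] \subseteq \textrm{DSPACE}[f(n)\,g(n)\,\log n]$, so I only need to verify that the space bound is polynomial in $n$. Since $f(n) = O(n^{2k}/(\log n)^2)$ and $g(n) = n^k/\log n$, we have $f(n)\,g(n)\,\log n = O(n^{3k}/(\log n)^2) \subseteq n^{O(1)}$, hence $\textrm{FO}_{\bigvee}[f(n),g(n)] \subseteq \textrm{PSPACE}$. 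Taking the union over all $k \geq 1$ finishes the proof.

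The argument is essentially bookkeeping: the real content sits entirely in Theorems \ref{3.3.10} and \ref{3.3.11}, and the only concrete calculation is the elementary bound $f(n)\,g(n)\,\log n = O(n^{3k}/(\log n)^2)$, which presents no obstacle. The mildest subtlety is just ensuring the hypothesis $f(n)\geq \log n$ of Theorem \ref{3.3.10} is met, which is automatic for $f(n)=n^j$ with $j\geq 1$.
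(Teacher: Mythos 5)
Your proof is correct and follows exactly the route the paper intends: the paper presents this as a direct corollary of Theorems \ref{3.3.10} and \ref{3.3.11} together with Theorem \ref{2.2.11}, and your bookkeeping (choosing $f(n)=n^j$ for the forward inclusion and checking that $f(n)g(n)\log n$ is polynomially bounded for the reverse) is precisely the verification left implicit there.
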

 The last two equalities were proved by Immerman and Vardi (Theorem \ref{2.2.11}).

\section{More on Pebble Games}
In this section we talk more about pebble games and depth. We ask a question about the depth of the formulas $\varphi_{s,\nsim}$ whose fixed-points in a structure $\mathcal{A}$ are the sets of winning positions $(\overline{a},\overline{b})$ for Spoiler in the games $G_{\infty}^s(\mathcal{A},\overline{a},\mathcal{A},\overline{b})$. We conclude the section with a conjecture relating the existence of an upper bound (a function) on the depths (in the structures of a class of finite structures) of positive first-order formulas to the bounded-ness with the same bound, over the same class, of another characteristic of finite structures called the $s$-rank, and relating the existence of both bounds to the expressibility of infinitary finite variable formulas in $\textrm{IND}[t(n)]$, where $t(n)$ is the claimed bound.$\newline\newline$
Obviously, for any structures $\mathcal{A}$ and $\mathcal{B}$,
$$W_0^s(\mathcal{A},\mathcal{B}) \supseteq W_1^s(\mathcal{A},\mathcal{B}) \supseteq \ldots$$
and since there are at most $(||\mathcal{A}||+1)^s.(||\mathcal{B}||+1)^s$ $s$-partial isomorphisms from $\mathcal{A}$ to $\mathcal{B}$, then there is an $m\leq (||\mathcal{A}||+1)^s.(||\mathcal{B}||+1)^s$ such that $W_m^s(\mathcal{A},\mathcal{B})=W_{m+1}^s(\mathcal{A},\mathcal{B})$.$\newline$
For $\mathcal{A}=\mathcal{B}$ the minimum such $m$ has a name :
\begin{definition} $\newline$
The minimum $m$ such that $W_m^s(\mathcal{A},\mathcal{A})=W_{m+1}^s(\mathcal{A},\mathcal{A})$
is called the $s$-\emph{rank}  of $\mathcal{A}$, and is denoted by $r(s,\mathcal{A})$, or $r(\mathcal{A})$ for short.
\end{definition}

\begin{definition} $\newline$
Let $K$ be a class of structures. We say that $K$ is $s$-\emph{bounded} if the set $\{r(\mathcal{A})\; |\; \mathcal{A} \in K\}$ of $s$-ranks of structures in $K$ is bounded. The class $K$ is \emph{bounded} if it is $s$-bounded for every $s\geq 1$.
\end{definition}

Fix $s$, and let $$\overline{a} \sim \overline{b} \;\;\;\;\; \text{iff} \;\;\;\;\; \overline{a} \; \text{and} \; \overline{b} \; \text{satisfy the same} \; \textrm{L}_{\infty\omega}^s\text{-formulas in}\; \mathcal{A}$$
 We show that $\sim$ is definable in $\textrm{FO}(\textrm{LFP})$. Let $\overline{a}$ and $\overline{b}$ range over $A^s$. For $j\geq 0$ define $\sim_j$ on $A^s$ by induction :
$$\overline{a} \sim_0 \overline{b} \;\;\;\;\; \text{iff} \;\;\;\;\; \overline{a} \; \text{and} \; \overline{b} \; \text{satisfy the same atomic formulas in}\; \mathcal{A}$$
$$\overline{a} \sim_{j+1} \overline{b} \;\;\;\;\; \text{iff} \;\;\;\;\; \overline{a}\sim_0 \overline{b} \; \text{and for all} \; i=1,\ldots,s \; \text{and all} \; a\in A\;(b\in A)$$ $$\text{there is} \; b\in A\; (a\in A) \; \text{such that} \; \overline{a}\frac{a}{i} \sim_j \overline{b}\frac{b}{i}$$

Obviously, $\overline{a} \sim_j \overline{b}$ iff Duplicator has a winning strategy in the pebble game $G^s_j(\mathcal{A},\overline{a},\mathcal{A},\overline{b})$ with $s$ pebbles and $j$ moves. Clearly, $\sim_0\supseteq \sim_1 \supseteq \ldots$, so that $\sim_l=\sim_{l+1}$ for some $l$ ($l$ is the $s$-rank $r(\mathcal{A})$). For such an $l$ we have $\sim_l=\sim$. For the complements $\nsim_j$ of $\sim_j$ we have $$\nsim_0\subseteq \nsim_1 \subseteq \ldots$$ They are the stages $F_1^{\varphi_{s,\nsim}},F_2^{\varphi_{s,\nsim}},\ldots$, where $\varphi_{s,\nsim}(Z,x_1,\ldots,x_s,y_1,\ldots,y_s)$ is the following formula positive in $Z$ :
$$\underset{\underset{\psi \; \text{atomic}}{\psi\in \textrm{FO}^s}}{\bigvee}(\psi(\overline{x}) \leftrightarrow \neg \psi(\overline{y})) \vee \underset{1\leq i \leq s}{\bigvee} (\exists x_i \forall y_i Z\overline{x} \overline{y} \vee \exists y_i \forall x_i Z\overline{x} \overline{y}).$$ $\newline$
Then the $\textrm{FO}(\textrm{LFP})$ formula $\neg [\textrm{LFP}_{Z,\overline{x},\overline{y}}\varphi_{s,\nsim}(Z,\overline{x},\overline{y})]\overline{x}\overline{y}$ expresses $\overline{x} \sim \overline{y}$.

\textbf{Open Question:} The fixed-point of the formula $\newline$ $\small{\varphi_{s,\nsim}(x_1,\ldots,x_s,y_1,\ldots,y_s,Z)}$ is in $\textrm{IND}[n^{2s}]$; is this fixed-point expressible in $\textrm{IND}[n^r]$ for some $r <2s$ ? $\newline$
If not then we have the strict hierarchy $$\textrm{IND}[n^2]\subsetneq \textrm{IND}[n^4] \subsetneq \textrm{IND}[n^6] \subsetneq \ldots$$
\textbf{Open Question :} Is there a structure $\mathcal{A}$ for which $r(s,\mathcal{A})$ is $(||\mathcal{A}||+1)^{2s}$?

\begin{definition}\cite{Flum} (Scott Formulas) $\newline$
For given $\overline{a}$, the formula $$\sigma_{\overline{a}} := \psi^{r(\mathcal{A})}_{\overline{a}} \wedge \underset{\overline{b}\in (A\cup\{*\})^s}{\bigwedge} \forall v_1 \ldots \forall v_s (\psi^{r(\mathcal{A})}_{\overline{b}} \rightarrow \psi^{r(\mathcal{A})+1}_{\overline{b}})$$
(more exactly, $\sigma_{\overline{a}} =$ $^s\sigma_{\mathcal{A},\overline{a}}$) is called the $s$-\emph{Scott formula} of $\overline{a}$ in $\mathcal{A}$. It is an $\textrm{FO}^s$-formula of quantifier rank $r(\mathcal{A})+1+s$. In particular, $\sigma_{\mathcal{A}} := \sigma_{*...*}$ is an $\textrm{FO}^s$-sentence.
\end{definition}
These formulas capture the whole $\textrm{L}^s_{\infty\omega}$-theory of $\mathcal{A}$ :

\begin{theorem}\cite{Flum}$\newline$
Let $\mathcal{A}$ be a structure.
\begin{enumerate}
\item[(a)] For any structure $\mathcal{B}$,
                       $$\mathcal{B} \vDash \sigma_{\mathcal{A}} \;\;\;\;\; \text{iff} \;\;\;\;\; \mathcal{A}\equiv^{\textrm{L}^s_{\infty\omega}} \mathcal{B}.$$
\item[(b)] For $\overline{a}\in (A\cup\{*\})^s$, any structure $\mathcal{B}$ and $\overline{b}\in (B\cup\{*\})^s$ with $supp(\overline{a})=supp(\overline{b})$,
            $$\mathcal{B}\vDash \sigma_{\overline{a}}[\overline{b}] \;\; \text{iff} \;\; \overline{a} \; \text{satisfies in} \; \mathcal{A} \; \text{the same}\; \textrm{L}^s_{\infty\omega}\text{-formulas as} \; \overline{b} \; \text{in} \; \mathcal{B}. $$
\end{enumerate}
\end{theorem}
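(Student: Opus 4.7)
The plan is to derive (a) as the special case of (b) obtained by taking $\overline{a} = \overline{b} = *\ldots*$ (since then ``$\overline{a}$ satisfies in $\mathcal{A}$ the same $\textrm{L}_{\infty\omega}^s$-formulas as $\overline{b}$ in $\mathcal{B}$'' unfolds to $\mathcal{A} \equiv^{\textrm{L}_{\infty\omega}^s} \mathcal{B}$), and concentrate all the work on (b). Set $l := r(\mathcal{A})$ throughout. A preliminary observation I would record is that $\sigma_{\overline{a}}$ genuinely lives in $\textrm{L}_{\infty\omega}^s$: each $\psi_{\overline{a}'}^m$ is built inductively from atomic formulas via $\wedge, \bigwedge, \bigvee$ and the quantifiers $\exists v_i, \forall v_i$, so only the $s$ variables $v_1,\ldots,v_s$ are used.

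For the direction $(\Leftarrow)$, I would first verify that $\mathcal{A} \vDash \sigma_{\overline{a}}[\overline{a}]$. The first conjunct is automatic from the definition of $\psi_{\overline{a}}^l$. For the second conjunct, I would invoke the previous section's equivalence saying that $\mathcal{A} \vDash \psi_{\overline{c}}^m[\overline{d}]$ iff $\overline{c} \mapsto \overline{d} \in W_m^s(\mathcal{A},\mathcal{A})$. Since $l = r(\mathcal{A})$ is by definition the least index where $W_l^s(\mathcal{A},\mathcal{A}) = W_{l+1}^s(\mathcal{A},\mathcal{A})$, this forces $\mathcal{A} \vDash \forall v_1\ldots\forall v_s (\psi_{\overline{c}}^l \to \psi_{\overline{c}}^{l+1})$ for every $\overline{c} \in (A\cup\{*\})^s$. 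Transferring the $\textrm{L}_{\infty\omega}^s$-sentence $\sigma_{\overline{a}}$ across the assumed $\textrm{L}_{\infty\omega}^s$-equivalence of $\overline{a}$ in $\mathcal{A}$ with $\overline{b}$ in $\mathcal{B}$ then yields $\mathcal{B} \vDash \sigma_{\overline{a}}[\overline{b}]$.

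For the direction $(\Rightarrow)$, the plan is to build a back-and-forth system containing $\overline{a}\mapsto\overline{b}$ and close via the previous theorem's implication (iii)$\Rightarrow$(iv). Given $\mathcal{B} \vDash \sigma_{\overline{a}}[\overline{b}]$, I would set
$$I := \{\overline{a}' \mapsto \overline{b}' \mid \overline{a}' \in (A\cup\{*\})^s,\ \overline{b}' \in (B\cup\{*\})^s,\ supp(\overline{a}') = supp(\overline{b}'),\ \mathcal{B} \vDash \psi_{\overline{a}'}^l[\overline{b}']\}.$$
The first conjunct of $\sigma_{\overline{a}}$ puts $\overline{a}\mapsto\overline{b}$ into $I$, and each member is automatically an $s$-partial isomorphism because $\psi_{\overline{a}'}^0$, which pins down the atomic type on the support, is a conjunct of $\psi_{\overline{a}'}^l$. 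The forth property is then the engine of the argument: the universal conjunct of $\sigma_{\overline{a}}$, instantiated with the parameter $\overline{a}'$, promotes $\mathcal{B} \vDash \psi_{\overline{a}'}^l[\overline{b}']$ to $\mathcal{B} \vDash \psi_{\overline{a}'}^{l+1}[\overline{b}']$, and the conjunct $\exists v_i\, \psi_{\overline{a}'\frac{a}{i}}^l$ of $\psi_{\overline{a}'}^{l+1}$ then supplies the witness $b \in B$ extending the map. The back property is symmetric via the dual conjunct $\forall v_i \bigvee_{a\in A} \psi_{\overline{a}'\frac{a}{i}}^l$.

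The one point demanding care will be bookkeeping in the large conjunction defining $\sigma_{\overline{a}}$: the bound index $\overline{b}$ appearing there is entirely unrelated to the target tuple $\overline{b}$ in the statement of the theorem, and I must check that its range $(A\cup\{*\})^s$ is wide enough to cover every pattern $\overline{a}'$ that arises when $I$ is traversed by the back-and-forth. This is precisely what the definition provides, so no further combinatorial work is required and the proof closes.
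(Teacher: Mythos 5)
The paper states this theorem without proof, merely citing Ebbinghaus--Flum, so there is no in-paper argument to compare against; your proof is correct and is essentially the standard one. Reducing (a) to (b) via $\overline{a}=\overline{b}=*\ldots*$ is sound; in $(\Rightarrow)$ your set $I$ does contain $\overline{a}\mapsto\overline{b}$, consists of $s$-partial isomorphisms via the conjunct $\psi^0_{\overline{a}'}$, and inherits the forth and back properties from the conjuncts $\exists v_i\,\psi^l_{\overline{a}'\frac{a}{i}}$ and $\forall v_i\bigvee_{a\in A}\psi^l_{\overline{a}'\frac{a}{i}}$ of $\psi^{l+1}_{\overline{a}'}$ after the universal conjunct of $\sigma_{\overline{a}}$ upgrades $\psi^l_{\overline{a}'}$ to $\psi^{l+1}_{\overline{a}'}$, so the implication (iii)$\Rightarrow$(iv) of the $G^s_\infty$ theorem closes that direction; and in $(\Leftarrow)$ the verification of $\mathcal{A}\vDash\sigma_{\overline{a}}[\overline{a}]$ correctly rests on $W^s_{l}(\mathcal{A},\mathcal{A})=W^s_{l+1}(\mathcal{A},\mathcal{A})$ together with the equivalence $\mathcal{A}\vDash\psi^m_{\overline{c}}[\overline{d}]$ iff $\overline{c}\mapsto\overline{d}\in W^s_m(\mathcal{A},\mathcal{A})$.
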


\begin{corollary}\cite{Flum} $\newline$
Each $L_{\infty\omega}^s$-formula $\varphi$ is equivalent to a countable disjunction of $\textrm{FO}^s$-formulas. In fact, $\varphi$ is equivalent to the $\textrm{L}_{\infty\omega}^s$-formula $\newline$ $\bigvee\{\sigma_{\overline{a}}\; | \;\mathcal{A} \; \text{is a structure},\; \overline{a}\in A^s,\; \mathcal{A}\vDash \varphi[\overline{a}]\}$. Moreover, if $K$ is any class of structures, then $\varphi$ and  $\bigvee\{\sigma_{\overline{a}}\; | \;\mathcal{A}\in K ,\; \overline{a}\in A,\; \mathcal{A}\vDash \varphi[\overline{a}]\}$ are equivalent in all structures of $K$.
\end{corollary}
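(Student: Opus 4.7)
The plan is to derive this corollary almost directly from part (b) of the preceding theorem, which says that the Scott formula $\sigma_{\mathcal{A},\overline{a}}$ pins down the complete $\textrm{L}^s_{\infty\omega}$-type of $\overline{a}$ in $\mathcal{A}$. Write $\Phi := \bigvee\{\sigma_{\mathcal{A},\overline{a}} \mid \mathcal{A} \text{ a structure},\ \overline{a}\in A^s,\ \mathcal{A}\vDash \varphi[\overline{a}]\}$ and show $\varphi \equiv \Phi$ by proving implication in both directions. For $\varphi \rightarrow \Phi$, suppose $\mathcal{B}\vDash \varphi[\overline{b}]$. Then the pair $(\mathcal{B},\overline{b})$ is one of the indices in the disjunction, and by part (b) of the Scott theorem applied with $\mathcal{A}=\mathcal{B}$, $\overline{a}=\overline{b}$ we have $\mathcal{B}\vDash \sigma_{\mathcal{B},\overline{b}}[\overline{b}]$, so $\mathcal{B}\vDash \Phi[\overline{b}]$.

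For $\Phi \rightarrow \varphi$, suppose $\mathcal{B}\vDash \Phi[\overline{b}]$. Then some disjunct is satisfied, i.e.\ there exist $\mathcal{A}$ and $\overline{a}\in A^s$ with $\mathcal{A}\vDash\varphi[\overline{a}]$ and $\mathcal{B}\vDash\sigma_{\mathcal{A},\overline{a}}[\overline{b}]$. By part (b) of the preceding theorem, this last satisfaction tells us that $\overline{a}$ in $\mathcal{A}$ and $\overline{b}$ in $\mathcal{B}$ satisfy exactly the same $\textrm{L}^s_{\infty\omega}$-formulas. Since $\varphi$ itself is an $\textrm{L}^s_{\infty\omega}$-formula and is satisfied by $\overline{a}$ in $\mathcal{A}$, it must also be satisfied by $\overline{b}$ in $\mathcal{B}$. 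This gives $\mathcal{B}\vDash \varphi[\overline{b}]$ and completes the equivalence.

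To get the claim that the disjunction is \emph{countable}, I would either invoke the earlier proposition saying that, in the finite, every $\textrm{L}_{\infty\omega}$-formula is equivalent to one with only countably many disjunctions, or observe directly that two disjuncts $\sigma_{\mathcal{A},\overline{a}}$ and $\sigma_{\mathcal{A}',\overline{a}'}$ are equivalent whenever $(\mathcal{A},\overline{a})$ and $(\mathcal{A}',\overline{a}')$ have the same $\textrm{L}^s_{\infty\omega}$-type, so one may replace the indexing class by a set of representatives. Since each $\sigma_{\mathcal{A},\overline{a}}$ is an $\textrm{FO}^s$-formula, and there are only countably many $\textrm{FO}^s$-formulas up to equivalence, the set of inequivalent disjuncts is countable.

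Finally, for the relativization to a class $K$, the same argument works verbatim except that in the forward direction one only needs to produce \emph{some} pair $(\mathcal{A},\overline{a})$ with $\mathcal{A}\in K$ witnessing the disjunct. If $\mathcal{B}\in K$ and $\mathcal{B}\vDash\varphi[\overline{b}]$, take $(\mathcal{A},\overline{a}) = (\mathcal{B},\overline{b})$, which lies in $K$; the reverse implication is unchanged since it never used that the witnessing $\mathcal{A}$ belonged to any particular class. There is no real obstacle here: the entire content is packaged inside part (b) of the Scott theorem, and the corollary is a straightforward unpacking together with an appeal to countability of $\textrm{FO}^s$ up to equivalence.
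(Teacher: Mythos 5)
Your proof is correct: the paper states this corollary without proof (citing Ebbinghaus--Flum), and your derivation from part (b) of the preceding theorem --- both implications, the countability of the set of disjuncts because each $\sigma_{\overline{a}}$ is one of the countably many $\textrm{FO}^s$-formulas, and the relativization to $K$ --- is exactly the intended unpacking. No gaps.
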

As an application of the Scott formulas we present a condition for $\textrm{L}^s_{\infty\omega}$ and $\textrm{FO}^s$ to coincide in expressive power, but first we need the following lemma :
\begin{lemma} \cite{Flum}
For structures $\mathcal{A}$ and $\mathcal{B}$, if $W_j^s(\mathcal{A},\mathcal{A})=W_{j+1}^s(\mathcal{A},\mathcal{A})$ and $\mathcal{A} \equiv^{\textrm{L}_{\infty\omega}^s} \mathcal{B}$, then $W_j^s(\mathcal{B},\mathcal{B})=W_{j+1}^s(\mathcal{B},\mathcal{B})$. Hence : if $\mathcal{A} \equiv^{\textrm{L}_{\infty\omega}^s} \mathcal{B}$ then $r(\mathcal{A})=r(\mathcal{B})$.
\end{lemma}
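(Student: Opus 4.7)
The plan is to push the equality $W_j^s = W_{j+1}^s$ from $\mathcal{A}$ to $\mathcal{B}$ by transferring each $s$-partial isomorphism pair witnessing membership in $W_j^s(\mathcal{B},\mathcal{B})$ to a matching pair in $\mathcal{A}$, using the back-and-forth supplied by $\mathcal{A}\equiv^{\textrm{L}_{\infty\omega}^s}\mathcal{B}$. By the theorem and corollary just above, this hypothesis is equivalent to Duplicator winning $G_\infty^s(\mathcal{A},\mathcal{B})$, i.e.\ $*\ldots *\mapsto *\ldots *\in W_\infty^s(\mathcal{A},\mathcal{B})$; and for each $m$ (including $m=\infty$) and structures $\mathcal{C},\mathcal{D}$, the earlier theorem characterizes $W_m^s(\mathcal{C},\mathcal{D})$ as the set of $s$-partial isomorphisms $\overline{u}\mapsto\overline{v}$ such that $\overline{u}$ in $\mathcal{C}$ and $\overline{v}$ in $\mathcal{D}$ satisfy the same $\textrm{FO}^s$-formulas of quantifier rank $\leq m$ (or the same $\textrm{L}_{\infty\omega}^s$-formulas when $m=\infty$).

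Since $W_{j+1}^s\subseteq W_j^s$ is automatic, I would fix $\overline{b}_1\mapsto\overline{b}_2\in W_j^s(\mathcal{B},\mathcal{B})$ of common support $P$ and produce tuples $\overline{a}_1,\overline{a}_2\in (A\cup\{*\})^s$ of the same support $P$ such that $\overline{a}_i\mapsto\overline{b}_i\in W_\infty^s(\mathcal{A},\mathcal{B})$ for $i=1,2$. Starting from the winning empty position, I would let Spoiler place, one at a time, the pebbles indexed by $P$ on the coordinates of $\overline{b}_1$ in the right copy; Duplicator's winning responses supply an $\overline{a}_1$ of support $P$ with $\overline{a}_1\mapsto\overline{b}_1\in W_\infty^s(\mathcal{A},\mathcal{B})$, and an independent run of the same procedure for $\overline{b}_2$ yields $\overline{a}_2$ of support $P$ with $\overline{a}_2\mapsto\overline{b}_2\in W_\infty^s(\mathcal{A},\mathcal{B})$. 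By the $\textrm{L}_{\infty\omega}^s$-characterization of $W_\infty^s$, for each $i$ and each $\textrm{L}_{\infty\omega}^s$-formula $\varphi(\overline{v})$, $\mathcal{A}\vDash\varphi[\overline{a}_i]$ iff $\mathcal{B}\vDash\varphi[\overline{b}_i]$.

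Combining these two individual transfers at every quantifier rank yields: $\overline{b}_1,\overline{b}_2$ satisfy the same $\textrm{FO}^s$-formulas of quantifier rank $\leq j$ in $\mathcal{B}$ iff $\overline{a}_1,\overline{a}_2$ satisfy the same ones in $\mathcal{A}$, so $\overline{a}_1\mapsto\overline{a}_2\in W_j^s(\mathcal{A},\mathcal{A})=W_{j+1}^s(\mathcal{A},\mathcal{A})$ by hypothesis; running the transfer backwards at quantifier rank $\leq j+1$ then puts $\overline{b}_1\mapsto\overline{b}_2$ in $W_{j+1}^s(\mathcal{B},\mathcal{B})$, closing the missing inclusion. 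For the ``Hence'' part, note that the hypothesis $\mathcal{A}\equiv^{\textrm{L}_{\infty\omega}^s}\mathcal{B}$ is symmetric in $\mathcal{A},\mathcal{B}$, so the same argument applied in both directions shows that the set $\{m \mid W_m^s(\mathcal{C},\mathcal{C})=W_{m+1}^s(\mathcal{C},\mathcal{C})\}$ coincides for $\mathcal{C}=\mathcal{A}$ and $\mathcal{C}=\mathcal{B}$, whence the minima agree and $r(\mathcal{A})=r(\mathcal{B})$. The only delicate bookkeeping is the construction step, where I must verify that running the back-and-forth from the empty position produces tuples of support exactly $P$; this is automatic, because each Spoiler move of a pebble onto a previously unoccupied coordinate on the right forces Duplicator to fill the corresponding coordinate on the left, so the supports grow in lockstep.
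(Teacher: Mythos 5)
Your proof is correct, and its core --- conjugating the pair $\overline{b}_1\mapsto\overline{b}_2$ through cross-structure partial isomorphisms into $\mathcal{A}$, applying the hypothesis $W_j^s(\mathcal{A},\mathcal{A})=W_{j+1}^s(\mathcal{A},\mathcal{A})$ there, and transferring back --- is exactly the closing argument of the paper's proof. Where you differ is in how the cross-structure witnesses $\overline{a}_i\mapsto\overline{b}_i$ are produced. You invoke the infinitary characterization: $\mathcal{A}\equiv^{\textrm{L}_{\infty\omega}^s}\mathcal{B}$ means Duplicator wins $G_\infty^s(\mathcal{A},\mathcal{B})$, so the back property of $W_\infty^s(\mathcal{A},\mathcal{B})$ supplies tuples $\overline{a}_i$ of the right support agreeing with $\overline{b}_i$ on \emph{all} $\textrm{L}_{\infty\omega}^s$-formulas, and agreement at rank $j$ and at rank $j+1$ then comes for free. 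The paper instead transfers the explicit $\textrm{L}_{\infty\omega}^s$-sentence $\psi_{\mathcal{A}}^j\wedge\bigwedge_{\overline{a}}\forall v_1\ldots\forall v_s(\psi^j_{\overline{a}}\rightarrow\psi^{j+1}_{\overline{a}})$ from $\mathcal{A}$ to $\mathcal{B}$ and reads off two facts: the empty position lies in $W_j^s(\mathcal{A},\mathcal{B})$, and $W_j^s(\mathcal{A},\mathcal{B})=W_{j+1}^s(\mathcal{A},\mathcal{B})$. Its witnesses therefore live only in $W_j^s(\mathcal{A},\mathcal{B})$, and that last equality is precisely what lets the rank-$(j+1)$ information flow back to $\mathcal{B}$ at the end. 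Your route is a little cleaner, since membership in $W_\infty^s$ handles both ranks at once and you never need the intermediate equality of cross-structure sets, at the cost of invoking the full infinitary game theorem of Section 3.3 rather than only the finite-rank machinery and the isomorphism types. Your bookkeeping remark about supports growing in lockstep is right, and your symmetry argument for the ``Hence'' clause (the set of $m$ with $W_m^s=W_{m+1}^s$ is a final segment of $\mathbb{N}$ and is the same for both structures, so the minima agree) correctly fills in a step the paper leaves implicit.
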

\begin{proof}  Suppose that $W_j^s(\mathcal{A},\mathcal{A})=W_{j+1}^s(\mathcal{A},\mathcal{A})$ and $\mathcal{A} \equiv^{\textrm{L}_{\infty\omega}^s} \mathcal{B}$. $\newline$
Put $\varphi := \psi_{\mathcal{A}}^j \wedge \underset{\overline{a}\in(A\cup\{*\})^s}{\bigwedge} \forall v_1 \ldots \forall v_s (\psi^j_{\overline{a}} \rightarrow \psi^{j+1}_{\overline{a}})$. $\newline$ Since $W_j^s(\mathcal{A},\mathcal{A})=W_{j+1}^s(\mathcal{A},\mathcal{A})$, then $\mathcal{A}\vDash \varphi$, and since $\mathcal{A} \equiv^{\textrm{L}_{\infty\omega}^s} \mathcal{B}$, then also $\mathcal{B}\vDash \varphi$.
$\mathcal{B} \vDash \psi^j_{\mathcal{A}}$ implies : $$*\ldots* \mapsto *\ldots* \in W_j^s(\mathcal{A},\mathcal{B})\;\;\;\;\; (1)$$
and $\mathcal{B} \vDash \underset{\overline{a}\in(A\cup\{*\})^s}{\bigwedge} \forall v_1 \ldots \forall v_s (\psi^j_{\overline{a}} \rightarrow \psi^{j+1}_{\overline{a}})$ implies :
 $$W_j^s(\mathcal{A},\mathcal{B}) = W_{j+1}^s(\mathcal{A},\mathcal{B}) \;\;\;\;\;\;\;\;\;\;\;\; (2)$$
From $(1)$ and $(2)$ it follows that $$W_j^s(\mathcal{A},\mathcal{B}) : \mathcal{A} \cong^s_{part} \mathcal{B} \;\;\;\;\;\;\;\;\;\;\;\;\;\;\;\; (3)$$
From $(1)$ and $(3)$ it follows that for any $\overline{b} \in (B\cup\{*\})^s$ there is $\newline\overline{a} \in (A\cup \{*\})^s$ such that $\overline{a}\mapsto \overline{b} \in W^s_j(\mathcal{A},\mathcal{B})$.$\newline$
Let $\overline{b}\mapsto\overline{b^{\prime}} \in W_j^s(\mathcal{B},\mathcal{B})$, then $\overline{b}$ and $\overline{b^{\prime}}$ satisfy in $\mathcal{B}$ the same formulas of $\textrm{FO}^s$ of quantifier rank $\leq j$. There are $\overline{a}$ and $\overline{a^{\prime}}$ such that $\overline{a} \mapsto \overline{b}$ and $\overline{a^{\prime}} \mapsto \overline{b^{\prime}}$ are in $W_j^s(\mathcal{A},\mathcal{B})$. Then $\overline{a}$ and $\overline{a^{\prime}}$ satisfy  in $\mathcal{A}$ the same formulas of $\textrm{FO}^s$ of quantifier rank $\leq  j$. Thus $\overline{a} \mapsto \overline{a^{\prime}}$ is in $W_j^s(\mathcal{A},\mathcal{A})$ and hence in $W_{j+1}^s(\mathcal{A},\mathcal{A})$, so $\overline{a}$ and $\overline{a^{\prime}}$ satisfy  in $\mathcal{A}$ the same formulas of $\textrm{FO}^s$ of quantifier rank $\leq {j+1}$. Since $W_j^s(\mathcal{A},\mathcal{B}) = W_{j+1}^s(\mathcal{A},\mathcal{B})$ then $\overline{b}$ satisfies in $\mathcal{B}$ the same $\textrm{FO}^s$-formulas of quantifier rank $\leq j+1$ as $\overline{a}$ in $\mathcal{A}$, and $\overline{b^{\prime}}$ satisfies in $\mathcal{B}$ the same $\textrm{FO}^s$-formulas of quantifier rank $\leq j+1$ as $\overline{a^{\prime}}$ in $\mathcal{A}$. Hence $\overline{b}$ and $\overline{b^{\prime}}$ satisfy in $\mathcal{B}$ the same $\textrm{FO}^s$-formulas of quantifier rank $\leq j+1$. Thus $\overline{b} \mapsto \overline{b^{\prime}} \in W^s_{j+1}(\mathcal{B},\mathcal{B})$.
\end{proof}

\begin{theorem}\cite{Flum}\label{3.3.12}
\begin{enumerate}
\item[(a)] For $s\geq 1$ the following are equivalent:
            \begin{enumerate}
                   \item[(i)] $K$ is $s$-bounded.
                   \item[(ii)] On $K$, every $\textrm{L}_{\infty\omega}^s$-formula is equivalent to an $\textrm{FO}^s$-formula.
                   \item[(iii)] On $K$, every $\textrm{L}_{\infty\omega}^s$-formula is equivalent to an $\textrm{FO}$-formula.
            \end{enumerate}
\item[(b)] $K$ is bounded iff $\textrm{FO}$ and $\textrm{L}_{\infty\omega}^{\omega}$ have the same expressive power on $K$.
\end{enumerate}
\end{theorem}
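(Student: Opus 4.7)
The plan is to deduce part (b) as a routine consequence of part (a), leaving the real work in (a). For (a) I would chain (i) $\Rightarrow$ (ii) $\Rightarrow$ (iii) mechanically from the preceding Scott normal form corollary, and then prove the hard direction (iii) $\Rightarrow$ (i) contrapositively via a cardinality pigeonhole.

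For (i) $\Rightarrow$ (ii): let $N$ bound the $s$-ranks in $K$. Given an $\textrm{L}^s_{\infty\omega}$-formula $\varphi(\overline{v})$ with free variables among $v_1,\ldots,v_s$, the Scott normal form corollary above rewrites $\varphi$, equivalently on $K$, as $\bigvee\{\sigma_{\overline{a}} : \mathcal{A}\in K,\ \overline{a}\in A^s,\ \mathcal{A}\vDash\varphi[\overline{a}]\}$. Each Scott formula is an $\textrm{FO}^s$-formula of quantifier rank $r(\mathcal{A})+1+s\leq N+1+s$, and since the equivalence $\equiv^s_{N+1+s}$ has only finitely many classes on $s$-tuples (a standard finiteness of \ef-types), only finitely many of these Scott formulas are pairwise inequivalent. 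The disjunction therefore collapses to a finite $\textrm{FO}^s$-formula. The implication (ii) $\Rightarrow$ (iii) is immediate from $\textrm{FO}^s\subseteq\textrm{FO}$.

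For (iii) $\Rightarrow$ (i), assume $K$ is not $s$-bounded, so $\{r(\mathcal{A}):\mathcal{A}\in K\}$ is infinite. The preceding lemma yields that $\mathcal{A}\equiv^{\textrm{L}^s_{\infty\omega}}\mathcal{B}$ forces $r(\mathcal{A})=r(\mathcal{B})$, hence $K$ meets at least $\aleph_0$ many $\equiv^{\textrm{L}^s_{\infty\omega}}$-classes. Pick a countable sequence $C_1,C_2,\ldots$ of distinct such classes and representatives $\mathcal{A}_i\in C_i$. For each $S\subseteq\mathbb{N}$, the countable disjunction $\bigvee_{i\in S}\sigma_{\mathcal{A}_i}$ is an $\textrm{L}^s_{\infty\omega}$-sentence which, by the Scott-sentence characterization ($\mathcal{B}\vDash\sigma_{\mathcal{A}_i}$ iff $\mathcal{A}_i\equiv^{\textrm{L}^s_{\infty\omega}}\mathcal{B}$), defines on $K$ exactly $\bigcup_{i\in S}C_i$; distinct $S$ give distinct subclasses. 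Thus $\textrm{L}^s_{\infty\omega}$ defines at least $2^{\aleph_0}$ subclasses of $K$, while $\textrm{FO}$ over a fixed finite vocabulary has only countably many sentences modulo equivalence and hence at most $\aleph_0$ definable subclasses. Pigeonhole then produces an $\textrm{L}^s_{\infty\omega}$-sentence with no $\textrm{FO}$-equivalent on $K$, contradicting (iii).

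Part (b) is then immediate: if $K$ is bounded it is $s$-bounded for every $s$, so by (a) every formula of $\textrm{L}^\omega_{\infty\omega}=\bigcup_s\textrm{L}^s_{\infty\omega}$ is $\textrm{FO}$-equivalent on $K$; conversely, collapse of $\textrm{L}^\omega_{\infty\omega}$ to $\textrm{FO}$ on $K$ forces the collapse of each $\textrm{L}^s_{\infty\omega}$, giving $s$-boundedness for every $s$ by (a). The delicate step is (iii) $\Rightarrow$ (i): one might prefer a constructive, game-theoretic witness exhibiting a specific $\textrm{L}^s_{\infty\omega}$-sentence that no $\textrm{FO}$-sentence matches on $K$, but because a putative $\textrm{FO}$-equivalent may use more than $s$ variables, matching its $k$-pebble game to an $s$-rank witness is awkward; the cardinality pigeonhole I propose sidesteps this by exploiting the sheer abundance of Scott-definable subclasses.
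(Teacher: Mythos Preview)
Your proposal is correct and essentially identical to the paper's proof: both use the Scott-formula corollary with bounded quantifier rank to collapse the disjunction for (i) $\Rightarrow$ (ii), and for (iii) $\Rightarrow$ (i) both pick structures in $K$ of pairwise distinct $s$-rank (hence pairwise $\textrm{L}^s_{\infty\omega}$-inequivalent by the preceding lemma), form $2^{\aleph_0}$ disjunctions of their Scott sentences indexed by subsets of $\mathbb{N}$, and invoke the countability of $\textrm{FO}$. The only cosmetic difference is that you phrase the uncountability step in terms of $\equiv^{\textrm{L}^s_{\infty\omega}}$-classes rather than directly in terms of distinct $s$-ranks; just make sure your representatives $\mathcal{A}_i$ are chosen in $K$ so that distinct $S$ yield distinct subclasses of $K$.
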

\begin{proof} $\newline$
As (b) is a consequence of (a), it suffices to prove (a). First suppose that $K$ is $s$-bounded and set $m := sup \{r(s,\mathcal{A})\; | \; \mathcal{A} \in K\} < \infty$. Thus, for $\mathcal{A} \in K$ and $\overline{a}$ in $\mathcal{A}$, the quantifier rank of $\sigma_{\overline{a}}$ is $\leq m+s+1$. Let $\varphi$ be any $\textrm{L}^s_{\infty\omega}$-formula. Then the disjunction in the preceding corollary is a disjunction of formulas of quantifier rank $\leq m+s+1$ and hence, it is a finite one. This shows that (i) implies (ii). The implication from (ii) to (iii) is trivial. To show that (iii) implies (i) assume, by contradiction, that K is not $s$-bounded. Let $\mathcal{A}_0,\mathcal{A}_1,...$ be structures in $K$ of pairwise distinct $s$-rank. For $M\subseteq \mathbb{N}$ let
$$\varphi_M := \bigvee \{\sigma_{\mathcal{A}_i}\;|\; i\in M\}.$$
By the previous lemma, if $L,M\subseteq \mathbb{N}$ and $L\neq M$ then $K\nvDash \varphi_L \leftrightarrow \varphi_M$. Hence on $K$, $\textrm{L}^s_{\infty\omega}$ contains uncountably many pairwise nonequivalent sentences and is therefore more expressive than $\textrm{FO}$.
\end{proof}

\begin{theorem}\label{t3}\cite{Flum} $\newline$
Let $K$ be a class of structures. Let $K$ be \emph{fixed-point bounded}, i.e., for any first-order formula $\varphi(X,\overline{x})$ positive in $X$ with free variables among $\overline{x}$, $X$, there is an $m_0$ such that
                                       $$K \vDash \forall \overline{x} (\varphi^{m_0 +1} (\overline{x})\rightarrow \varphi^{m_0}(\overline{x}))$$
Where $\varphi^m(\overline{x})$ is a first-order formula defining $F_m^{\varphi}$. $\newline$
Then the following are equivalent :
\begin{enumerate}
\item[(i)] $K$ is fixed-point bounded.
\item[(ii)]$K$ is bounded.
\item[(iii)] On $K$, every $\textrm{L}_{\infty\omega}^{\omega}$-formula is equivalent to an $\textrm{FO}$-formula.
\end{enumerate}
\end{theorem}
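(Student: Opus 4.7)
The plan is to establish the cycle by first noting that (ii) $\Leftrightarrow$ (iii) is exactly the content of Theorem \ref{3.3.12}(b), so the actual work reduces to proving (i) $\Leftrightarrow$ (ii).

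For (i) $\Rightarrow$ (ii), I would exploit the formula $\varphi_{s,\nsim}(Z,\overline{x},\overline{y})$ introduced in the previous section. Its construction is designed exactly so that its stages are the relations $\nsim_j$; since $\sim_j$ first stabilizes at $j=r(s,\mathcal{A})$, the depth of $\varphi_{s,\nsim}$ in $\mathcal{A}$ equals $r(s,\mathcal{A})$. Because $\varphi_{s,\nsim}$ is $Z$-positive, fixed-point boundedness, applied to it, delivers a uniform $m_0$ (depending only on $s$) with $K\vDash \forall \overline{x}\,\forall \overline{y}\,(\varphi_{s,\nsim}^{m_0+1}(\overline{x},\overline{y}) \rightarrow \varphi_{s,\nsim}^{m_0}(\overline{x},\overline{y}))$. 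Monotonicity of the stages then forces $r(s,\mathcal{A}) \leq m_0$ for every $\mathcal{A}\in K$, so $K$ is $s$-bounded. Since $s$ was arbitrary, $K$ is bounded.

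For (ii) $\Rightarrow$ (i), I would fix an $X$-positive formula $\varphi(X,\overline{x})$ where $X$ is $s$-ary and every variable is among $v_1,\ldots,v_k$ and $X$. By the lemma used in the proof of $\textrm{FO}(\textrm{PFP})\subseteq \textrm{L}_{\infty\omega}^{\omega}$, every stage $\varphi^m(\overline{x})$ is an $\textrm{FO}^{k+s}$-formula. Setting $s':=k+s$, Theorem \ref{3.3.12}(a) and (ii) together imply the existence of an integer $r$, depending only on the $s'$-rank bound of $K$, such that every $\textrm{FO}^{s'}$-formula is equivalent on $K$ to one of quantifier rank at most $r$; in particular each $\varphi^m$ is so equivalent. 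Modulo equivalence on finite structures, $\textrm{FO}^{s'}$-formulas of quantifier rank at most $r$ fall into only finitely many classes, since they are determined by the finitely many $s'$-$r$-isomorphism types (via the Hintikka-style formulas $\psi_{\overline{a}}^r$). By pigeonhole, there exist $m_1<m_2$ with $\varphi^{m_1} \equiv_K \varphi^{m_2}$; $X$-positivity makes the stages monotone increasing, so $\varphi^{m_1}\equiv_K \varphi^{m_1+1}$, giving $K\vDash \forall \overline{x}\,(\varphi^{m_1+1}(\overline{x})\rightarrow \varphi^{m_1}(\overline{x}))$ as required.

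The main obstacle is the finiteness step in (ii) $\Rightarrow$ (i): one must justify cleanly that, on the class of finite structures, $\textrm{FO}^{s'}$ restricted to quantifier rank at most $r$ has only finitely many equivalence classes. This rests on counting $s'$-$r$-isomorphism types, but requires care in handling formulas with free variables and in checking that $K$-equivalence (rather than full semantic equivalence) suffices for the pigeonhole to produce a genuine fixed point of the stage sequence. Once that technical point is in place, monotonicity of the stages, together with Theorem \ref{3.3.12}, closes the triangle.
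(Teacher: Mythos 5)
Your proposal is correct, and two of its three legs — (i) $\Rightarrow$ (ii) via the formula $\varphi_{s,\nsim}$, and (ii) $\Leftrightarrow$ (iii) by citation of Theorem \ref{3.3.12} — coincide with the paper's proof. Where you genuinely diverge is in how the cycle is closed: the paper proves (iii) $\Rightarrow$ (i) by contraposition, showing that if some positive $\varphi$ never stabilizes uniformly on $K$ then the formulas $\varphi_M := \bigvee_{m\in M}(\varphi^{m+1}\wedge\neg\varphi^{m})$ for $M\subseteq\mathbb{N}$ are pairwise inequivalent on $K$, so $\textrm{L}_{\infty\omega}^{\omega}$ has uncountably many inequivalent formulas on $K$ and cannot collapse to $\textrm{FO}$; you instead prove (ii) $\Rightarrow$ (i) directly, pushing each stage $\varphi^{m}$ into $\textrm{FO}^{s'}$ with $s'=k+s$, using the rank bound to compress every such formula to quantifier rank at most $r$ on $K$, and then applying pigeonhole over the finitely many equivalence classes together with monotonicity of the stages. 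Both arguments are sound: the finiteness step you flag as the main obstacle (finitely many inequivalent $\textrm{FO}^{s'}$-formulas of quantifier rank $\leq r$ over a finite vocabulary, via the $s'$-$r$-isomorphism types $\psi_{\overline{a}}^{r}$, and the observation that $K$-equivalence suffices since the stage sequence is increasing in every structure of $K$) is exactly the fact already used in the proof of Theorem \ref{3.3.12} to make the disjunction of Scott formulas finite, so nothing new is required. The paper's route is shorter and mirrors the cardinality argument of Theorem \ref{3.3.12}; yours buys an explicit quantitative payoff — the closure ordinal $m_1$ is bounded by the number of $r$-types in $s'$ variables, hence by a function of the $s'$-rank bound alone — which is precisely the kind of effective relationship between rank bounds and fixed-point depth that Conjecture \ref{c3} asks about.
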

\begin{proof} $\newline$
$(i) \Rightarrow (ii)$ : $\newline$
Assume that $K$ is fixed-point-bounded. Recall the formulas :
$$\varphi_{s,\nsim}(Z,\overline{x},\overline{y}) := \underset{\underset{\psi \; \text{atomic}}{\psi\in \textrm{FO}^s}}{\bigvee} (\psi(\overline{x})\leftrightarrow \neg\psi(\overline{y})) \vee \underset{1\leq i \leq s}{\bigvee} (\exists x_i \forall y_i Z \overline{x}\overline{y} \vee \exists y_i \forall x_i Z \overline{x}\overline{y})$$
where $\overline{x} := x_1 \ldots x_s,\; \overline{y} := y_1 \ldots y_s$. $\newline$
Since $K$ is fixed-point bounded then there is $m_0$ such that
                    $$K \vDash \forall \overline{x}\forall \overline{y} (\varphi_{s,\nsim}^{m_0 +1}(\overline{x},\overline{y}) \rightarrow \varphi_{s,\nsim}^{m_0}(\overline{x},\overline{y}) )$$
this is equivalent to $K \vDash \forall \overline{x}\forall\overline{y} (\neg \varphi_{s,\nsim}^{m_0}(\overline{x},\overline{y}) \rightarrow \neg \varphi_{s,\nsim}^{m_0 +1}(\overline{x},\overline{y}))$.
Thus for every $\mathcal{A} \in K$, every $\overline{a},\overline{b}\in A^s$, if Duplicator wins $G_{m_0}^s(\mathcal{A},\overline{a},\mathcal{A},\overline{b})$ then it wins $G_{m_0 +1}^s(\mathcal{A},\overline{a},\mathcal{A},\overline{b})$.
Hence for every $\mathcal{A} \in K$, $W_{m_0}^s(\mathcal{A},\mathcal{A}) = W_{m_0 +1}^s(\mathcal{A},\mathcal{A})$. So $K$ is $s$-bounded. $s$ was arbitrary, so $K$ is $s$-bounded for every $s$, i.e., $K$ is bounded.$\newline$
$(ii)\Leftrightarrow(iii)$ : was proved in Theorem \ref{3.3.12}. $\\$
$(iii)\Rightarrow(i)$ : We prove ($\neg (i) \Rightarrow \neg (iii)$)

Let $\varphi(X,\overline{x})$ be a first-order formula positive in $X$ with free variables among $\overline{x}$, $X$. Assume that there is no $m$ such that
                                          $$K\vDash \forall \overline{x} (\varphi^{m+1} (\overline{x})\rightarrow \varphi^{m}(\overline{x}))$$
i.e. for every $m$, there is $\mathcal{A}_m\in K$ such that
                  $$\mathcal{A}_m \vDash \exists \overline{x} (\varphi^{m+1} (\overline{x}) \wedge \neg \varphi^{m}(\overline{x})) \;\;\;\;\;\;\;\;\; (*)$$
For any $M \subseteq \mathbb{N}$, set $\varphi_M := \underset{m\in M}{\bigvee} (\varphi^{m+1} (\overline{x}) \wedge \neg \varphi^{m}(\overline{x}))$. If $M \neq L$, we may assume without loss of generality that there is $m_0 \in M \backslash L$. From $(*)$ $\mathcal{A}_{m_0} \vDash \varphi^{m_0 +1} (\overline{a}) \wedge \neg \varphi^{m_0}(\overline{a})$ for some $\overline{a}$ from $A_{m_0}$. Then $\mathcal{A}_{m_0} \vDash \varphi_{M}(\overline{a})$, and $\mathcal{A}_{m_0} \vDash \varphi^l(\overline{a})$ for all $l \geq m_0 +1$, and $\mathcal{A}_{m_0} \vDash \neg\varphi^l(\overline{a})$ for all $l \leq m_0$. $\newline$
For any $l\in L$ either $l > m_0$ or $l < m_0$, thus $\mathcal{A}_{m_0} \nvDash \varphi^{l+1}(\overline{a}) \wedge \neg \varphi^l (\overline{a})$ for all $l \in L$. Hence $\mathcal{A}_{m_0} \nvDash \varphi_L$. $\newline$
Thus for $L\neq M$, $\varphi_M$ and $\varphi_L$ are not equivalent on $K$. So on $K$ there are uncountably many pairwise nonequivalent $\textrm{L}_{\infty\omega}^{\omega}$-sentences, hence the expressive power of $\textrm{L}_{\infty\omega}^{\omega}$ on $K$ is greater than that of $\textrm{FO}$.
\end{proof}

Inspired by this we introduce the following more general definition for fixed-point bounded-ness :
\begin{definition} $\newline$
For a class $K$ of structures and polynomially bounded $t(n)$, we say that $K$ is $t(n)$-fixed-point bounded if for any first-order formula $\varphi(X,\overline{x})$ positive in $X$ with free variables among $\overline{x}$, $X$, there is a constant number $c$ such that for every positive integer $n$, and every structure $\mathcal{A}$ of size $n$ in $K$, the depth of $\varphi$ in $\mathcal{A}$ is $\leq ct(n)$.
\end{definition}
and then we suggest the following conjecture which is a generalization of the previous theorem : $\newline$

\begin{conjecture}\label{c3}
For every class $K$ of finite structures, every polynomially bounded $t(n)$, the following are equivalent:
\begin{enumerate}
\item[(i)] $K$ is $t(n)$-fixed-point bounded.
\item[(ii)] For every $s$, there is a constant number $c$ such that for every positive integer $n$, every structure $\mathcal{A}$ of size $n$ in $K$, $$r(s,\mathcal{A})\leq ct(n).$$
\item[(iii)] On $K$, every $\textrm{L}_{\infty\omega}^{\omega}$-formula is equivalent to an $\textrm{IND}[t(n)]$-formula.
\end{enumerate}
\end{conjecture}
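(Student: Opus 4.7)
The plan is to prove the three implications $(i) \Rightarrow (ii) \Rightarrow (iii) \Rightarrow (i)$, generalising the argument of Theorem \ref{t3} to allow the bound $t(n)$ to grow with $n$.

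For $(i) \Rightarrow (ii)$, I would apply the $t(n)$-fixed-point boundedness hypothesis directly to the positive formula $\varphi_{s,\nsim}(Z,\overline{x},\overline{y})$ recalled just before the conjecture. Hypothesis (i) yields a constant $c_s$ such that for every $\mathcal{A}\in K$ of size $n$ the depth of $\varphi_{s,\nsim}$ in $\mathcal{A}$ is at most $c_s t(n)$. Since the $m$-th stage of $\varphi_{s,\nsim}$ is precisely $\nsim_m$, one obtains $\nsim_{c_s t(n)} = \nsim_{c_s t(n)+1}$, i.e.\ $r(s,\mathcal{A})\leq c_s t(n)$, which is exactly (ii). This is the same step as in Theorem \ref{t3}(i)$\Rightarrow$(ii), with $t(n)$ in place of a constant.

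For $(ii) \Rightarrow (iii)$, I would combine the Scott-formula representation of $\textrm{L}_{\infty\omega}^\omega$-sentences with the equality $\textrm{IND}[t(n)]=\textrm{FO}[t(n)]$ from Theorem \ref{2.2.14}. Given $\varphi\in\textrm{L}_{\infty\omega}^s$, hypothesis (ii) bounds the quantifier rank of every Scott formula $\sigma_{\overline{a}}$ for $\mathcal{A}\in K$ of size $n$ by $c_s t(n)+s+1$, so $\sigma_{\overline{a}}\in\textrm{FO}[t(n)]=\textrm{IND}[t(n)]$. To collapse the infinite Scott-based disjunction into a single $\textrm{IND}[t(n)]$-sentence uniformly in $n$, I would use the $\textrm{IND}[t(n)]$-formula $\neg[\textrm{LFP}_{Z,\overline{x}\overline{y}}\varphi_{s,\nsim}]\overline{x}\overline{y}$ (which has depth $O(t(n))$ by (ii)) to compute the $\sim$-partition of $s$-tuples inside one inductive definition, and then read off whether any realised $\sim$-class lies in the model class of $\varphi$.

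For $(iii) \Rightarrow (i)$, which I expect to be the main obstacle, I would argue by contradiction in the spirit of the corresponding step of Theorem \ref{t3}. Suppose some positive $\varphi(X,\overline{x})$ violates (i): there exist $\mathcal{A}_k\in K$ of sizes $n_k$ whose $\varphi$-depth equals $m_k+1$ with $m_k > k\cdot t(n_k)$. For each $M\subseteq\mathbb{N}$ form
$$\chi_M \;:=\; \bigvee_{m\in M}\exists\overline{x}\,\bigl(\varphi^{m+1}(\overline{x})\wedge\neg\varphi^{m}(\overline{x})\bigr).$$
By positivity of $\varphi$ and the exact-depth choice of $\mathcal{A}_k$, the formula $\exists\overline{x}\,(\varphi^{l+1}\wedge\neg\varphi^l)$ holds in $\mathcal{A}_k$ iff $l\leq m_k$, so the $\chi_M$ are pairwise inequivalent on $K$ as $M$ ranges over $\mathcal{P}(\mathbb{N})$. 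Under (iii) each $\chi_M$ would be equivalent on $K$ to some $\textrm{IND}[t(n)]$-formula, but only countably many $\textrm{FO}(\textrm{LFP})$-formulas exist up to syntactic equality, yielding the sought contradiction. The delicate point, and the main technical obstacle, is that (iii) does not assert that $\varphi$ itself has low depth, only that its fixed-point is \emph{somehow} definable in $\textrm{IND}[t(n)]$ (as with transitive closure via doubling); so the cardinality-counting argument must genuinely dominate any more compact re-encoding, and if this cardinality gap turns out to be too coarse, the fallback is a direct diagonalisation over a uniform enumeration of $\textrm{FO}(\textrm{LFP})$-formulas of depth $O(t(n))$ against the sequence $\{\mathcal{A}_k\}$.
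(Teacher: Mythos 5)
First, note that the statement you are proving is Conjecture~\ref{c3}: the paper gives no proof of it and explicitly lists it among the open problems, so there is no argument of the author's to compare yours against --- you are attacking an open question, and your sketch does not close it. Your step (i)$\Rightarrow$(ii) is correct and is exactly the specialisation to $\varphi_{s,\nsim}$ used in the proof of Theorem~\ref{t3}. The step (iii)$\Rightarrow$(i), however, fails as written because of the existential closure in $\chi_M$: since the stages of a positive $\varphi$ are increasing, $\mathcal{A}\vDash\exists\overline{x}\,(\varphi^{l+1}(\overline{x})\wedge\neg\varphi^{l}(\overline{x}))$ holds precisely when $l$ is below the depth of $\varphi$ in $\mathcal{A}$, so $\mathcal{A}\vDash\chi_M$ iff $\min M$ is below that depth, and $\chi_M\equiv\chi_L$ on \emph{all} structures whenever $\min M=\min L$. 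Your family is therefore countable up to equivalence and yields no contradiction; the ``delicate point'' you flag (that (iii) only asserts definability of the fixed-point, not low depth of $\varphi$ itself) is harmless for a cardinality argument, whereas this loss of pairwise inequivalence is fatal. The proof of Theorem~\ref{t3} keeps $\overline{x}$ free in $\varphi_M$ precisely so that a single witness $\overline{a}$ entering the stages at exactly step $m_0+1$ separates $\varphi_M$ from $\varphi_L$; with that repair, the countability of $\textrm{IND}[t(n)]\subseteq\textrm{FO}(\textrm{LFP})$ does finish this direction.

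The implication (ii)$\Rightarrow$(iii) is the real content of the conjecture and your sketch does not supply it. Theorem~\ref{3.3.12} rests entirely on the Scott disjunction becoming \emph{finite} because the quantifier ranks of the formulas $\sigma_{\overline{a}}$ are uniformly bounded over $K$; once $r(s,\mathcal{A})$ grows with $n$ that finiteness is gone, and ``reading off whether a realised $\sim$-class lies in the model class of $\varphi$'' is not a construction: the set of $\equiv^{\textrm{L}^s_{\infty\omega}}$-classes selected by an arbitrary $\textrm{L}^s_{\infty\omega}$-formula is an arbitrary (possibly non-recursive) infinitary datum, and nothing in (ii) makes that selection uniformly definable by one $\textrm{IND}[t(n)]$-formula. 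Worse, there is a cardinality obstruction to the implication itself: if $K$ is not $s$-bounded, the argument of Theorem~\ref{3.3.12} produces uncountably many pairwise $K$-inequivalent $\textrm{L}^s_{\infty\omega}$-sentences (disjunctions of Scott sentences of structures of pairwise distinct $s$-rank), while $\textrm{IND}[t(n)]$ is a countable set of formulas; hence (iii) forces $K$ to be bounded in the sense of Theorem~\ref{3.3.12}. Since (ii) with a nonconstant polynomially bounded $t(n)$ permits ranks that grow with $n$ (the class of finite orderings, whose $2$-rank grows linearly, is a natural example), your chain of implications cannot all be established by this route, and the conjecture in its literal reading appears to need either a reformulation of (iii) (say, definability by a uniform sequence of formulas in the spirit of Definition~\ref{3.3.8}) or a refutation rather than a proof.
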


\newpage
\thispagestyle{empty}
\mbox{}
.

\chapter{Conclusion and Future Research}
\section{Conclusion}
Depth is an important complexity measure that has several characterizations :
\begin{enumerate}
\item[(1)] It equals the number of iterations of a first-order quantifier block when the inductive definition is positive, Theorem \ref{2.2.14}.
(If the inductive definition is not positive, and hence its depth may be exponential, then the inductive definition is expressible by a first-order quantifier block iterated exponentially, Theorem \ref{2.2.11}).
\item[(2)] The class of boolean queries definable by positive inductive definitions (which are of polynomial depth) is exactly the class of boolean queries accepted by a deterministic polynomial-time Turing machine, Theorem \ref{2.2.1}.
\item[(3)] It equals parellel-time when the inductive definition is positive, Theorem \ref{2.2.14}.
\item[(4)] It equals circuit depth in Circuit Complexity, Theorem \ref{2.2.14}.
\end{enumerate}

Also number of variables, quantifier rank, number of quantifiers, number of $\vee$-symbols, and size of formulas in general, are all important complexity measures that are closely related to inductive definitions. We have seen in Theorem \ref{2.2.17} that the number of variables in an inductive definition is related to the number of processors in the corresponding $\textrm{CRAM}$, and that quantifier rank and number of quantifiers increase with iterations in an inductive definition. In the discussion after Definition \ref{3.3.8} we have seen that the class of structures definable by the fixed-point of a first-order formula of depth $t(n)$ is in $\textrm{FO}_{\bigvee}^{k}[2^{ct(n)}]$ for some constant number $c$. This shows roughly how many $\vee$-symbols an inductive definition of a certain depth needs. We have also seen that

$$\underset{k\geq 1}{\bigcup} \textrm{FO}_{\bigvee}[n^k+\left(\frac{n^k}{\log{n}}\right)^2,\frac{n^k}{\log{n}}] = \textrm{FO}[2^{n^{O(1)}}] = \textrm{PSPACE} = \textrm{FO}(\textrm{PFP})$$
in Corollary \ref{3.3.13}. In his paper "Number of Quantifier is Better than Number of Tape cells"\cite{NumQuan}, Immerman showed that $$\textrm{NSPACE}[f(n)] \subseteq \textrm{QN}[\frac{(f(n))^2}{\log{n}}]\subseteq \textrm{DSPACE}[(f(n))^2]$$
and hence as a corollary we have
$$\textrm{QN}[n^{O(1)}]=\textrm{FO}[2^{n^{O(1)}}] = \textrm{PSPACE} = \textrm{FO}(\textrm{PFP}) $$
this relates roughly number of quantifiers to depth.

\section{Open Problems}
We have left open some questions and a conjecture :
\begin{enumerate}
\item[(1)] In Section 2.3, we asked whether it is possible for two formulas, positive in the relation variables through which the induction is made, that the computation of their simultaneous fixed-point takes more time or more iterations than the computation of one of their nested fixed-points.
\item[(2)] In Section 3.4, we asked whether the fixed-point of the formula $\small{\varphi_{s,\nsim}(x_1,\ldots,x_s,y_1,\ldots,y_s,Z)}$ can be expressed as the fixed-point of a formula of a smaller depth than $n^{2s}$. If not, then we have the strict hierarchy :
$$\textrm{IND}[n^2]\subsetneq \textrm{IND}[n^4] \subsetneq \textrm{IND}[n^6] \subsetneq \ldots.$$
\item[(3)] Directly after the previous question we asked the question : Is there a structure $\mathcal{A}$ for which $r(s,\mathcal{A})$ is $(||\mathcal{A}||+1)^{2s}$? $\newline$
The answer of this question should help answer the previous question.
\item[(4)] Conjecture \ref{c3}, which is a generalization of Theorem \ref{t3}, $\newline$
For every class $K$ of finite structures, every polynomially bounded $t(n)$, the following are equivalent:
\begin{enumerate}
\item[(i)] $K$ is $t(n)$-fixed-point bounded.
\item[(ii)] For every $s$, there is a constant number $c$ such that for every positive integer $n$, every structure $\mathcal{A}$ of size $n$ in $K$, $$r(s,\mathcal{A})\leq ct(n).$$
\item[(iii)] On $K$, every $\textrm{L}_{\infty\omega}^{\omega}$-formula is equivalent to an $\textrm{IND}[t(n)]$-formula.
\end{enumerate}

\end{enumerate}

\newpage
\thispagestyle{empty}
\mbox{}
.

\end{document}